\DeclareMathOperator{\id}{id}
\DeclareMathOperator{\codim}{codim}
\DeclareMathOperator{\adj}{adj}
\DeclareMathOperator{\rk}{rk}
\DeclareMathOperator{\ran}{ran}
\DeclareMathOperator{\diag}{diag}
\DeclareMathOperator{\spa}{span}
\DeclareMathOperator{\GL}{GL}
\DeclareMathOperator{\SL}{SL}
\DeclareMathOperator{\UD}{UD}
\DeclareMathOperator{\GM}{GM}
\DeclareMathOperator{\tr}{tr}
\DeclareMathOperator{\dom}{dom}
\DeclareMathOperator{\opm}{M}
\DeclareMathOperator{\oph}{H}
\DeclareMathOperator{\hair}{hair}
\newcommand{\ve}{\varepsilon}
\newcommand{\N}{\mathbb{N}}
\newcommand{\R}{\mathbb{R}}
\newcommand{\C}{\mathbb{C}}
\newcommand{\cA}{\mathcal{A}}
\newcommand{\cC}{\mathcal{C}}
\newcommand{\cD}{\mathcal{D}}
\newcommand{\cM}{\mathcal{M}}
\newcommand{\cS}{\mathcal{S}}
\newcommand{\cZ}{\mathcal{Z}}
\newcommand{\cV}{\mathcal{V}}
\newcommand{\rr}{\mathbbm r}
\newcommand{\kk}{\mathbbm k}
\newcommand{\one}{{1\mathbbm a}}
\newcommand{\ti}{{\rm t}}
\newcommand{\fl}{\mathscr{Z}}
\newcommand{\flh}{\mathscr{Z}^{\operatorname{h}}}
\newcommand{\ulxi}{\boldsymbol{\omega}}
\newcommand{\ulyi}{\boldsymbol{\upsilon}}
\newcommand{\gx}{\omega}
\newcommand{\gy}{\upsilon}
\newcommand{\gX}{\Omega}
\newcommand{\gY}{\Upsilon}
\newcommand{\ulx}{\boldsymbol{x}}
\newcommand{\uly}{\boldsymbol{y}}
\newcommand*{\gl}[1]{\GL_{#1}(\kk)}
\newcommand*{\mat}[1]{\opm_{#1}(\kk)}
\newcommand*{\matc}[1]{\opm_{#1}(\C)}
\newcommand*{\herm}[1]{\oph_{#1}(\C)}
\newcommand{\Langle}{\mathop{<}\!}
\newcommand{\Rangle}{\!\mathop{>}}
\newcommand{\mx}{\Langle \ulx\Rangle}
\newcommand{\px}{\kk\!\mx}
\newcommand{\pxc}{\C\!\mx}
\newcommand{\pxy}{\kk\!\Langle \ulx,\uly\Rangle}
\def\moverlay{\mathpalette\mov@rlay}
\def\mov@rlay#1#2{\leavevmode\vtop{
		\baselineskip\z@skip \lineskiplimit-\maxdimen
		\ialign{\hfil$#1##$\hfil\cr#2\crcr}}}
\newcommand{\plangle}{\moverlay{(\cr<}}
\newcommand{\prangle}{\moverlay{)\cr>}}
\newcommand{\rx}{\kk\plangle \ulx \prangle}
\newtheorem{thm}{Theorem}[section]
\newtheorem{lem}[thm]{Lemma}
\newtheorem{cor}[thm]{Corollary}
\newtheorem{prop}[thm]{Proposition}
\newtheorem{thmA}{Theorem}
\theoremstyle{definition}
\newtheorem{exa}[thm]{Example}
\theoremstyle{remark}
\newtheorem{rem}[thm]{Remark}
\numberwithin{equation}{section}
\begin{document}
	
\setcounter{tocdepth}{3}
\contentsmargin{2.55em} 
\dottedcontents{section}[3.8em]{}{2.3em}{.4pc} 
\dottedcontents{subsection}[6.1em]{}{3.2em}{.4pc}
\dottedcontents{subsubsection}[8.4em]{}{4.1em}{.4pc}

\makeatletter
\newcommand{\mycontentsbox}{%

	{\centerline{NOT FOR PUBLICATION}
		\addtolength{\parskip}{-2.3pt}
		\tableofcontents}}
\def\enddoc@text{\ifx\@empty\@translators \else\@settranslators\fi
	\ifx\@empty\addresses \else\@setaddresses\fi
	\newpage\mycontentsbox\newpage\printindex}
\makeatother

\setcounter{page}{1}

\title[Free loci and factorization of nc polynomials]{
	Geometry of free loci and factorization of noncommutative polynomials
	}

\author[J. W. Helton]{J. William Helton${}^1$}
\address{J. William Helton, Department of Mathematics, University of California San Diego}
\email{helton@math.ucsd.edu}
\thanks{${}^1$Research supported by the NSF grant DMS 1500835.
	The author was supported through the program ``Research in Pairs” (RiP) by the Mathematisches Forschungsinstitut Oberwolfach (MFO) in 2017.}

\author[I. Klep]{Igor Klep${}^2$}
\address{Igor Klep, Department of Mathematics, The University of Auckland}
\email{igor.klep@auckland.ac.nz}
\thanks{${}^2$Supported by the Marsden Fund Council of the Royal Society of New Zealand. Partially supported
	by the Slovenian Research Agency grants P1-0222, L1-6722, J1-8132. The author was supported through the program ``Research in Pairs” (RiP) by the Mathematisches Forschungsinstitut Oberwolfach (MFO) in 2017.}

\author[J. Vol\v{c}i\v{c}]{Jurij Vol\v{c}i\v{c}${}^3$}
\address{Jurij Vol\v{c}i\v{c}, Department of Mathematics\\
	Ben-Gurion University of the Negev \\ Israel}
\email{volcic@post.bgu.ac.il}
\thanks{${}^3$Supported by The University of Auckland Doctoral Scholarship.}

\subjclass[2010]{Primary 13J30, 15A22, 47A56; Secondary 14P10, 16U30, 16R30}
\date{\today}
\keywords{noncommutative polynomial, factorization, singularity locus, linear matrix inequality, spectrahedron, real algebraic geometry, realization, free algebra, invariant theory}


\def\igor{\color{red}}
\def\jurij{\color{blue}}
\begin{abstract}
The free singularity locus of a noncommutative polynomial $f$ is defined to be the sequence of hypersurfaces
$\fl_n(f)= \left\{X\in\mat{n}^g\colon \det f(X)=0\right\}$.
The main theorem of this article shows that $f$ is irreducible if and only if
$\fl_n(f)$ is eventually irreducible. 
A key step in the proof is an irreducibility result for linear pencils. 
Arising from this is a free singularity locus Nullstellensatz for noncommutative polynomials.
Apart from consequences to factorization in a free algebra, 
the paper also discusses its applications to invariant subspaces in perturbation theory and linear matrix inequalities in real algebraic geometry.
\end{abstract}

\maketitle


\section{Introduction}

Algebraic sets, as zero sets of commutative polynomials are called, are basic objects in algebraic geometry and commutative algebra. 
One of the most fundamental results is 
Hilbert's Nullstellensatz, describing polynomials vanishing on an algebraic set. A simple special case of it is the following: if a polynomial $h$ vanishes on a hypersurface given as the zero set of an irreducible polynomial $f$, then $f$ divides $h$. Various far-reaching noncommutative versions of algebraic sets and corresponding Nullstellens\"atze have been introduced and studied by several authors \cite{Ami,vOV,RV,Scm}. Heavily reliant on these ideas and results are emerging areas of free real algebraic geometry \cite{dOHMP,HKN} and free analysis \cite{MS,KVV3,AM,KS}. In the free context there are several natural choices for the ``zero set'' of a noncommutative polynomial $f$. 
For instance, Amitsur proved a Nullstellensatz for the set of tuples of matrices $X$ satisfying $f(X)=0$ \cite{Ami}, and a conclusion for pairs $(X,v)$ of matrix tuples $X$ and nonzero vectors $v$ such that $f(X)v=0$ was given by Bergman \cite{HM}. In contrast with the successes in the preceding two setups, a Nullstellensatz-type analysis for the set of matrix tuples $X$ making $f(X)$ singular (not invertible), which we call the {\it free singularity locus} of $f$ (free locus for short), is much less developed. In this paper we rectify this. One of our main results connects free loci with factorization in free algebra \cite{Coh,BS,ARJ,BHL,Scr} in the sense of the special case of Hilbert's Nullstellensatz mentioned above.

A key intermediate step for studying components of free loci is an irreducibility theorem for determinants of linear pencils. To adapt this result for noncommutative polynomials we use a linearization process: given a noncommutative polynomial $f$, one can ``decouple'' products of variables using Schur complements to produce a linear matrix pencil $L$ with the same free locus as $f$. The most effective way of obtaining such an $L$ is via realizations originating in control theory \cite{BGM} and automata theory \cite{BR}.

The last objective of this paper is to derive consequences of our irreducibility theorem for hermitian pencils. They appear prominently across real algebraic geometry, see e.g. determinantal representations \cite{Bra,KPV}, the solution of the Lax conjecture \cite{HV,LPR} and the solution of the Kadison-Singer paving conjecture \cite{MSS}. Furthermore, hermitian pencils give rise to linear matrix inequalities (LMIs), the cornerstone of systems engineering \cite{SIG} and semidefinite optimization \cite{WSV}, and a principal research focus in free convexity \cite{EW,BPT,HKM,DDOSS}. Our irreducibility theorem enables us to analyze of the boundary of a free spectrahedron, also known as an LMI domain, associated to a  hermitian pencil $L$. We characterize the smooth points as those $X$ which make $\ker L(X)$ one-dimensional, and then prove the density of these points in the LMI boundary under mild hypotheses.

This paper is of possible interest to functional analysts (especially in free analysis), matrix theorists, those who study semidefinite programming, and to researchers in polynomial identities and representation theory. For a different audience the paper could be written using terminology from invariant theory as was explained to us by \v{S}pela \v{S}penko and Claudio Procesi; see \cite[Appendix A]{KV} for a discussion.

\subsection*{Main results}

Let $\kk$ be an algebraically closed field of characteristic $0$, $\ulx=(x_1,\dots,x_g)$ a tuple of freely noncommuting variables and $\px$ the free $\kk$-algebra over $\ulx$. To a noncommutative polynomial $f\in\px$ we assign its {\it free (singularity) locus}
$$\fl(f)=\bigcup_{n\in\N} \fl_n(f),\qquad \text{where}\quad
\fl_n(f)=\left\{X\in\mat{n}^g\colon \det f(X)=0\right\}.$$
If a noncommutative polynomial $f$ factors as a product of two nonconstant polynomials, $f=f_1f_2$, then $\fl(f)=\fl(f_1)\cup\fl(f_2)$. It is easy to see that the free locus of a nonconstant polynomial is nonempty, so the hypersurface $\fl_n(f)$ has at least two components for large enough $n\in\N$. Our main result is the converse to this simple observation.

\begin{thmA}\label{ta:1}
Let $f\in\px$ satisfy $f(\kk^g)\neq\{0\}$. Then $f$ is irreducible if and only if 
there exists $n_0\in\N$ such that $\fl_n(f)\subset \mat{n}^g$ is an irreducible hypersurface for all $n\ge n_0$.
\end{thmA}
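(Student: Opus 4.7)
The proof splits into an easy direction and the main direction. For the easy direction, suppose $f=f_1f_2$ is a nontrivial factorization with $f_1,f_2$ nonconstant. Since $\det f(X)=\det f_1(X)\det f_2(X)$, one has $\fl_n(f)=\fl_n(f_1)\cup\fl_n(f_2)$ for every $n$. I would argue that for all sufficiently large $n$ both $\fl_n(f_i)$ are nonempty hypersurfaces (a nonconstant noncommutative polynomial is eventually singular on some tuple, since one can evaluate at tuples with a prescribed scalar block). To conclude reducibility of $\fl_n(f)$, it remains to rule out the degenerate case $\fl_n(f_1)\subseteq\fl_n(f_2)$ for a cofinal set of $n$. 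For this I would appeal to a free singularity locus Nullstellensatz (advertised in the abstract/introduction), which forces $f_1$ to divide some power of $f_2$, contradicting the assumption that $f=f_1f_2$ is a nontrivial product of irreducibles (after passing to a further factorization of each $f_i$ if necessary).

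For the main direction, assume $f$ is irreducible with $f(\kk^g)\neq\{0\}$. The strategy announced in the introduction is linearization: produce a monic linear pencil $L=I-\sum_{i=1}^g A_i x_i$ (built from a minimal realization of $f$, so that $f$ appears as a Schur complement of $L$) with the property $\fl_n(f)=\fl_n(L)$ for all $n$ large enough, up to a trivial factor coming from the identity block. The key bookkeeping is that a nontrivial factorization of $f$ corresponds, via Schur complements and minimality of the realization, to a common invariant subspace of the coefficient tuple $(A_1,\dots,A_g)$. Hence irreducibility of $f$ translates into the pencil $L$ being irreducible in the appropriate structural sense (no common invariant subspace for the $A_i$, i.e.\ the image of the $A_i$ in the relevant endomorphism algebra generates an irreducible representation).

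The heart of the argument is then to invoke the irreducibility theorem for linear pencils alluded to in the introduction: for an irreducible monic pencil $L$, the hypersurface $\{X\in\mat n^g:\det L(X)=0\}$ is irreducible for all sufficiently large $n$. Granted this, one obtains irreducibility of $\fl_n(L)=\fl_n(f)$ for $n\ge n_0$, completing the proof.

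The step I expect to be the main obstacle is the second one, namely turning the algebraic statement ``$f$ is irreducible in $\px$'' into the representation-theoretic statement ``the coefficient matrices of the linearization generate an irreducible module.'' This requires working with a genuinely minimal realization (so that any factorization of the Schur-complement output must come from an invariant splitting of the state space) and then arguing that the converse also holds, so that the correspondence between factorizations of $f$ and invariant subspaces of the $A_i$ is faithful. Once this dictionary is set up, the pencil irreducibility theorem converts it into the geometric conclusion about $\fl_n(f)$, and the easy direction closes the equivalence.
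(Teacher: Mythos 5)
Your high-level decomposition is the right one, and for the main direction you correctly anticipate that the crux is linking irreducibility of $f$ in $\px$ to irreducibility of the associated pencil and then invoking Theorem~\ref{ta:2}. However, the route you propose for that link --- minimal state space realizations and an ``invariant subspace dictionary'' --- has a genuine gap. The Bart--Gohberg--Kaashoek--Ran correspondence matches \emph{minimal factorizations} of $f$ not with single invariant subspaces of the coefficient tuple of $L$, but with pairs $(\cS,\cS^\times)$ of complementary invariant subspaces (one for $L$, one for the nilpotent pencil $L^\times$). An invariant subspace $\cS$ of $L$ alone does not by itself produce a factorization of $f$; you also need a complementary $\cS^\times$ invariant for the perturbed tuple, and the assertion that such a complement always exists is precisely Theorem~\ref{ta:3} / Theorem~\ref{t:pert}. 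In the paper that perturbation theorem is \emph{derived from} Theorem~\ref{t:irrpoly}(1), so taking your route would be circular without a fresh proof of a statement of that strength. The paper's actual argument sidesteps invariant subspaces entirely: it uses Higman's linearization trick to produce a pencil $L$ stably associated to $f$, then applies Cohn's factorization theory for matrices over the fir $\px$ (Lemma~\ref{l:sa}): an atom $f$ is stably associated to an atom $L$, and writing $L$ in block triangular form~\eqref{e:05}, two non-identity diagonal blocks would produce a product of two non-invertible matrices, contradicting atomicity, so $L$ is stably associated to a single irreducible block. Only then does Theorem~\ref{t:main} apply. Also note a small but consequential imprecision: the pencil $L$ whose determinant matches $\det f(\gX^{(n)})$ comes from a minimal realization of $f^{-1}$, not of $f$ (Lemma~\ref{l:flip1}); a realization of $f$ itself has jointly nilpotent coefficients because $f$ is everywhere defined.

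Your ``easy direction'' is also overcomplicated and contains a false step. If $f=f_1f_2$ with $f_1,f_2$ nonconstant, the observation that $\det f(\gX^{(n)})=\det f_1(\gX^{(n)})\det f_2(\gX^{(n)})$ with both factors nonconstant for large $n$ (since any nonconstant noncommutative polynomial has nonempty free locus in all large sizes, by padding a witness with a zero block) already makes $\det f(\gX^{(n)})$ reducible --- no Nullstellensatz is needed, and the ``degenerate case'' $\fl_n(f_1)\subseteq\fl_n(f_2)$ does not need to be ruled out because the theorem is effectively about irreducibility of the \emph{polynomial} $\det f(\gX^{(n)})$ (that is how Theorem~\ref{ta:1} follows from Theorem~\ref{t:irrpoly}), and a repeated factor already fails irreducibility. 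Moreover your attempt to exclude the degenerate case by claiming a Nullstellensatz ``forces $f_1$ to divide some power of $f_2$, contradicting\ldots'' does not yield a contradiction at all when $f_1$ and $f_2$ are stably associated atoms (for instance $f_1=f_2$): in that case $f_1$ divides $f_2$ trivially and $\fl(f_1)=\fl(f_2)$ with no inconsistency, so that part of the argument simply does not work.
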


Note that in general we cannot take $n_0=1$ in Theorem \ref{ta:1}; for example, the noncommutative polynomial $f=(1-x_1)^2-x_2^2$ is irreducible, but $f(\omega_1,\omega_2)=(1-\omega_1-\omega_2)(1-\omega_1+\omega_2)$ for commuting indeterminates $\omega_1,\omega_2$. Theorem \ref{ta:1} is a corollary of the more general Theorem \ref{t:irrpoly} below, which certifies the inclusion of free loci of noncommutative polynomials. For example, consider irreducible polynomials $f_1=1+x_1x_2$ and $f_2=1+x_2x_1$. It is well-known that for square matrices $A$ and $B$ of equal size, the eigenvalues of $AB$ and $BA$ coincide, so $\fl(f_1)=\fl(f_2)$ holds. On the other hand, this equality also follows from 
$$
\begin{pmatrix}1 & x_1 \\ 0 & 1\end{pmatrix}
\begin{pmatrix}1+x_1x_2 & 0 \\ 0 & 1\end{pmatrix}
\begin{pmatrix}1 & 0 \\ -x_2 & 1\end{pmatrix}
=
\begin{pmatrix}0 & 1 \\ -1 & -x_2\end{pmatrix}
\begin{pmatrix}1+x_2x_1 & 0 \\ 0 & 1\end{pmatrix}
\begin{pmatrix}0 & -1 \\ 1 & x_1\end{pmatrix}.
$$
Such an algebraic condition, called {\it stable associativity} of (irreducible) polynomials $f_1$ and $f_2$, is 
necessary for $\fl(f_1)=\fl(f_2)$ to hold. 
More precisely, we obtain the following  free locus Nullstellensatz for polynomials:

\begin{thmA}\label{ta:null}
Let $f_1,f_2\in\px$ satisfy $f_i(\kk^g)\neq\{0\}$.
Then $\fl(f_1)\subseteq\fl(f_2)$ if and only if 
each irreducible factor of $f_1$ is 
(up to stable associativity)
an irreducible factor of $f_2$.
\end{thmA}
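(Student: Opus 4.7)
The plan is to prove the two directions separately. The reverse implication is mostly formal: since $\fl(gh)=\fl(g)\cup\fl(h)$, the free locus of a polynomial is the union of the free loci of its irreducible factors, and stable associativity of irreducible $p,q\in\px$ --- an equality of the form $\diag(p,I_k)=U\diag(q,I_\ell)V$ with $U,V$ invertible matrices over $\px$ --- gives $\fl(p)=\fl(q)$ after evaluating at any $X\in\mat{n}^g$ and taking ordinary determinants of both sides. Combining the two observations yields $\fl(f_1)\subseteq\fl(f_2)$ whenever each irreducible factor of $f_1$ is stably associated to an irreducible factor of $f_2$.

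For the forward implication, factor $f_1=p_1\cdots p_k$ and $f_2=q_1\cdots q_m$ into irreducibles in $\px$. Fix $i$ and apply Theorem \ref{ta:1} to $p_i, q_1, \dots, q_m$: there exists $n_0$ such that $\fl_n(p_i)$ and each $\fl_n(q_j)$ are irreducible hypersurfaces in $\mat{n}^g$ for all $n\geq n_0$. Then for such $n$ the containment $\fl_n(p_i)\subseteq\fl_n(f_2)=\bigcup_j \fl_n(q_j)$, together with irreducibility of $\fl_n(p_i)$, forces $\fl_n(p_i)\subseteq\fl_n(q_{j(n)})$ for some index $j(n)$, and equality follows because both sides are irreducible hypersurfaces in $\mat{n}^g$. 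A pigeonhole argument on the finite index set then selects a single $j$ with $\fl_n(p_i)=\fl_n(q_j)$ for all sufficiently large $n$.

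The task thus reduces to a single claim about irreducibles: if $p,q\in\px$ are irreducible and $\fl_n(p)=\fl_n(q)$ for all large $n$, then $p$ and $q$ are stably associated. Here I would linearize $p$ and $q$ to minimal monic linear pencils $L_p, L_q$ having the same free loci as $p$ and $q$ respectively, using the realization machinery mentioned in the introduction. A Nullstellensatz for pencils (the companion to the irreducibility theorem for pencils, established earlier in the paper as the main intermediate result) then forces $L_p$ and $L_q$ to be equivalent over $\kk$, and this equivalence of minimal linearizations can be transported back to stable associativity of $p$ and $q$ via the theory of free ideal rings and the Schur-complement reductions used in building the linearization.

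The main obstacle is precisely this last step. Theorem \ref{ta:1} supplies only the geometric half of the argument --- matching irreducible components --- whereas encoding the full stable-associativity class of a polynomial into a minimal pencil, and inverting that process in a way that keeps track of the constant terms and of the asymmetries introduced by linearization, is where the substantive algebraic work lies, and it depends crucially on the pencil Nullstellensatz referenced above.
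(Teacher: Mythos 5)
Your proposal follows essentially the same route as the paper's. The paper states Theorem B as a special case of Theorem \ref{t:irrpoly}: factor $f_1,f_2$ into atoms in $\px$, show each atom is stably associated to an irreducible monic pencil (Lemma \ref{l:sa}, via Higman's trick \eqref{e:higman} plus a Burnside block-triangularization to peel off one irreducible block), and then invoke the fact that irreducible pencils with equal free loci are similar. Your reverse implication is the same formal argument the paper records in Section~\ref{s4}, item (1); your forward implication makes the component-matching step explicit via Theorem~\ref{ta:1} and a pigeonhole argument, which is indeed what underlies the paper's compressed justification ``Follows by (2) and a factorization of $f_i$ into atoms.'' Your reduction to the atom-level claim and the intent to linearize and pass to pencils is exactly the paper's Lemma~\ref{l:sa} plus Theorem~\ref{t:irrpoly}(2).

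Two small corrections. First, the ``pencil Nullstellensatz'' you invoke --- that irreducible monic pencils with equal free loci are similar --- is not the main intermediate result established in this paper; it is imported from \cite[Theorem~3.11]{KV} (see item (2) in Subsection~2.1). The new contribution of this paper, Theorem~\ref{t:main}/Theorem~\ref{ta:2}, is the \emph{eventual irreducibility} of $\det L(\gX^{(n)})$, which is what powers your component-matching step; the similarity statement is a prior result. Second, the pigeonhole argument gives a single $j$ with $\fl_n(p_i)=\fl_n(q_j)$ for infinitely many $n$, not immediately for all large $n$; upgrading to the level-independent statement $\fl(p_i)=\fl(q_j)$ again uses the pencil similarity theorem from \cite{KV}. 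Finally, in your formulation of stable associativity the identity blocks $I_k$ and $I_\ell$ must have $1+k=1+\ell$ for the displayed matrix equality to make sense; this is a typo, not a substantive error. The one step you consciously leave unfilled --- encoding an atom into an irreducible pencil up to stable association --- is precisely the content of Lemma~\ref{l:sa}, and is the genuine algebraic work that completes the argument.
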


Theorems \ref{ta:1} and \ref{ta:null} are special cases of 
Theorem \ref{t:irrpoly} below,
and both hold for matrix polynomials. 
The proof of Theorem \ref{t:irrpoly} consists of two ingredients. The first one is Cohn's factorization theory for semifirs \cite{Coh}, which deals with the ring-theoretic side of factorization. The second one is Theorem \ref{ta:2}, an irreducibility result about evaluations of linear matrix pencils, which we discuss next.

Given a {\it monic linear pencil} $L=I_d-A_1x_1-\cdots-A_gx_g$ with $A_j\in\mat{d}$, its evaluation at $X\in\mat{n}^g$ is defined as
$$L(X)=I_{dn}-A_1\otimes X_1-\cdots-A_g\otimes X_g\in\mat{dn},$$
where $\otimes$ is the Kronecker product. We say that a monic pencil is {\it irreducible} if its coefficients generate the whole matrix algebra, or equivalently, they do not admit a non-trivial common invariant subspace. For $k=1,\dots, g$ let $\gX^{(n)}_k$ be an $n\times n$ generic matrix, i.e., a matrix of $n^2$ independent commuting variables $\gx_{kij}$ for $1\le i,j\le n$.

\begin{thmA}\label{ta:2}
Let $L$ be an irreducible monic pencil. Then there exists $n_0\in\N$ such that $\det L(\gX^{(n)}_1,\dots,\gX^{(n)}_g)$ is an irreducible polynomial for all $n\ge n_0$.
\end{thmA}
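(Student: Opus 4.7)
I aim to show two things for $n$ large: (a) the hypersurface $\fl_n(L) \subseteq \mat{n}^g$ is irreducible as a variety, and (b) the polynomial $p_n := \det L(\gX^{(n)}_1, \dots, \gX^{(n)}_g)$ is reduced. Together these imply $p_n$ is, up to scalar, a power of an irreducible polynomial with exponent $1$, hence irreducible.

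For (a), the plan is to realize $\fl_n(L)$ as the first-factor image of the incidence variety
\[
\cW_n \;=\; \bigl\{(X, [v]) \in \mat{n}^g \times \mathbb{P}^{dn - 1} : L(X) v = 0\bigr\},
\]
and analyze its irreducible components. Identifying $v \in \kk^{dn}$ with a $d\times n$ matrix $V$, the kernel condition $L(X)v = 0$ reads $V = \sum_k A_k V X_k^T$, a linear system in $X$. Over the open set $U \subset \mathbb{P}^{dn - 1}$ of $[V]$ with $V$ of full row rank (nonempty once $n \ge d$), irreducibility of $L$ (via Burnside) yields $\sum_k \ran(A_k) = \kk^d$, so this system is consistent with a $(gn^2 - dn)$-dimensional affine space of solutions; hence the preimage of $U$ under the second projection is an irreducible affine bundle of dimension $gn^2 - 1$. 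On a stratum with $\rk V = r$ for $0 < r < d$, a fiber is nonempty only when $\ran(V) \subseteq \sum_k A_k \ran(V)$, and irreducibility of $L$ (absence of proper invariant subspaces) forces this containment to be strict, so $\dim \sum_k A_k \ran(V) \ge r+1$. A dimension count then bounds the contribution of these lower strata by $gn^2 - 1 - n + r(d-r)$, which is strictly less than $gn^2 - 1$ once $n > d^2/4$. For such $n$, $\cW_n$ has a unique top-dimensional component $C$; since $\fl_n(L) = \pi_1(\cW_n)$ is a pure hypersurface whose every component must be covered by some top-dimensional component of $\cW_n$, we conclude $\fl_n(L) = \overline{\pi_1(C)}$ is irreducible.

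For (b), the dimension balance $\dim \cW_n = \dim \fl_n(L) = gn^2 - 1$ forces $\dim \ker L(X) = 1$ at a generic $X \in \fl_n(L)$, whence $\adj L(X) = \alpha v w^T$ with $v, w$ spanning the right/left kernels. By Jacobi's formula, simultaneous vanishing at $X$ of all partial derivatives of $\det L$ would force $W^T A_k V = 0$ for every $k$; combining with $V = \sum_k A_k V X_k^T$ yields $W^T V = 0$. But the construction of $C$ ensures $V$ has full row rank at generic $X$, and the same argument applied to the transposed pencil $L^T$ (still irreducible) does the same for $W$, so $W^T V \ne 0$, a contradiction. Thus some partial derivative of $\det L$ is nonzero at a generic point of $\fl_n(L)$, so $p_n$ has a simple zero along $\fl_n(L)$ and is squarefree. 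The delicate part of the proof is (a): ruling out spurious top-dimensional components of $\cW_n$ over the low-rank strata simultaneously requires the full strength of irreducibility of $L$ (to guarantee $\sum_k A_k \ran(V)$ always strictly contains $\ran(V)$) and $n$ large enough that the ambient size $n$ dominates the possible degeneracies $r(d-r)$.
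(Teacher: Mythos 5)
Your argument is correct, and it takes a genuinely different route from the paper's proof of Theorem~\ref{t:main}. The paper's approach is invariant-theoretic: it starts from the irreducibility of the determinant of a single $dn\times dn$ generic matrix, and then ``decouples'' products $w_{\imath\jmath}(A)$ step by step via Lemma~\ref{l:main}, whose engine is the doubling trick $n\rightsquigarrow 2n$ combined with $\gl{n}$-invariance of factors (Lemma~\ref{l:invfac}) and the triangular evaluation of trace polynomials (Lemma~\ref{l:triang}). You instead pass to the incidence variety $\cW_n\subset\mat{n}^g\times\mathbb{P}^{dn-1}$ over the kernel lines, stratify by $\rk V$, use irreducibility (absence of invariant subspaces) to force $\ran V\subsetneq\sum_k A_k\ran V$ on the degenerate strata, and beat their dimension by taking $n>r(d-r)$; this yields irreducibility of $\fl_n(L)$ and a generic $1$-dimensional kernel, after which reducedness follows via Jacobi's formula, adjugate rank, and the reshaping identity $L(X)v=0\Leftrightarrow V=\sum_k A_k V X_k^{\ti}$ combined with the transposed pencil. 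Two comments on what each buys. Your argument gives a bound $n_0=O(d^2)$ (roughly $n>d^2/4$ and $n\ge d$), markedly better than the exponential-in-$d$ bound that the paper's recursive use of Lemma~\ref{l:main} produces (as the paper itself notes). The paper's argument, on the other hand, directly controls the interplay with trace-ring invariants and with evaluations on block matrices, which is what allows Theorem~\ref{t:main} to plug seamlessly into Lemma~\ref{l:deg}, the FL-minimality discussion in Remark~\ref{r:rad}, and the factorization machinery of Section~\ref{s4}; your Segre-stratification argument is more in the spirit of classical determinantal varieties. One small point worth spelling out in a final write-up: the transition from ``the unique top-dimensional component of $\cW_n$ lies over the full-rank locus'' to ``$V$ has full row rank at generic $X\in\fl_n(L)$'' uses that the lower-dimensional components of $\cW_n$ have images of strictly smaller dimension and so are avoided generically, and the symmetric claim for $W$ uses that $L^\ti$ is again irreducible (an invariant subspace for $A_j^\ti$ gives one for $A_j$ by orthogonal complementation). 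Both are fine, but deserve a sentence.
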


See Theorem \ref{t:main} for the proof, which combines in a novel way invariant theory for the action of $\gl{n}$ on $\mat{n}^g$ by simultaneous conjugation and free analysis techniques for transitioning between different sizes of generic matrices. Given a linear pencil $L$ we define its free locus analogously as in the preceding setting of noncommutative polynomials:
$$\fl(L)=\bigcup_{n\in\N} \fl_n(L),\qquad \text{where}\quad
\fl_n(L)=\left\{X\in\mat{n}^g\colon \det L(X)=0\right\}.$$
Theorem \ref{ta:1} is then deduced from Theorem \ref{ta:2} using a linearization process \cite[Section 5.8]{Coh}, which to every noncommutative polynomial $f$ assigns a linear pencil $L$ with $\fl(f)=\fl(L)$. 

In the second part of the paper we thus turn our attention to monic pencils $L$ with polynomial free loci, i.e., $\fl(L)=\fl(f)$ for some $f\in\px$. We apply noncommutative Fornasini-Marchesini state space realizations \cite{BV,BGM} to prove that the coefficients of such pencils $L$ are of the form $N_j+E_j$, where $N_j$ are jointly nilpotent matrices and $E_j$ are rank-one matrices with coinciding kernels (Corollary \ref{c:polyloc}). By connecting this result with Theorem \ref{ta:1} and minimal factorizations in the sense of realization theory we obtain a curious statement about invariant subspaces. Given a tuple of matrices $A=(A_1,\dots,A_g)$ and $A'=(A_1',\dots,A_g')$ we say that $A'$ is a {\it non-degenerate right rank-one perturbation} of $A$ if $A'_j-A_j=b_jc^\ti$ for some vectors $b_j$ and $c$ such that $\{b_1,\dots,b_g\}$ is not contained in a proper invariant subspace for $A_1,\dots,A_g$ and $c$ does not lie in a proper invariant subspace for $A_1^\ti,\dots,A_g^\ti$. The following is a consequence of Theorem \ref{t:pert}.

\begin{thmA}\label{ta:3}
Let $A$ be a tuple of jointly nilpotent matrices. If a non-degenerate right rank-one
perturbation of $A$ has an invariant subspace $\cS$, then there exists a subspace $\cS^\times$ that is complementary to $\cS$ and invariant under $A$.
\end{thmA}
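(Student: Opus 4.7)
My plan is to view the perturbed tuple $A'=(A_j+b_jc^\ti)$ as the state-space data of a noncommutative Fornasini--Marchesini realization of a polynomial $f\in\px$, and to derive the invariant-subspace statement from the general correspondence between minimal factorizations of $f$ and supporting pairs of complementary subspaces. First I would form the monic pencil $L=I-\sum_j A_j'x_j$. Since $A$ is jointly nilpotent and each $A_j'-A_j=b_jc^\ti$ has rank one with common right kernel $\ker c^\ti$, Corollary \ref{c:polyloc} supplies a polynomial $f\in\px$ with $\fl(L)=\fl(f)$; the two non-degeneracy hypotheses on the $b_j$ and on $c$ translate exactly to controllability and observability of the realization $(A,b,c)$, that is, to its minimality.

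Given the $A'$-invariant subspace $\cS$, I would choose any algebraic complement and pass to an adapted basis; in that basis each $A_j'$ is block upper triangular, which factors $L$ as a product of two monic pencils and, via the linearization correspondence used in the proof of Theorem \ref{ta:1}, descends to a factorization $f=f_1f_2$ in $\px$. I would then invoke Theorem \ref{t:pert}, the noncommutative analogue of the Bart--Gohberg--Kaashoek theorem: for a minimal Fornasini--Marchesini realization whose perturbed main operator tuple is $A'$ and whose unperturbed associate is $A=A'-bc^\ti$, minimal factorizations of $f$ are in bijection with supporting pairs $(\cS,\cS^\times)$ of subspaces with $\cS\oplus\cS^\times=\kk^d$, where $\cS$ is invariant under $A'$ and $\cS^\times$ is invariant under $A$. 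The partner of the given $\cS$ under this bijection is exactly the $A$-invariant complement required by the conclusion.

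The main obstacle will be the duality step: one must show that the partner subspace delivered by the factorization is invariant under the \emph{unperturbed} tuple $A$ (rather than $A'$) and is a genuine direct-sum complement to $\cS$ (rather than merely transversal in some weaker sense). Both conclusions rest on minimality of the realization, which in turn requires joint nilpotence of $A$ so that $I-\sum_j A_jx_j$ is invertible in formal power series and the realization formula for $f$ is well-defined in $\px$, together with the non-degeneracy hypotheses so that no state space of strictly smaller dimension carries a realization of $f$. Expressing the two diagonal blocks of the triangularized $L$ back in terms of the realization data, and tracking which block inherits the perturbation direction $b_j$ and which inherits the co-direction $c$, should produce the $A$-invariance of $\cS^\times$ and the direct-sum decomposition simultaneously.
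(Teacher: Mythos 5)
Your high-level outline is broadly the right setting (FM-realizations, minimality from non-degeneracy, the Bart--Gohberg--Kaashoek style correspondence), but there is a genuine circularity plus a missing core step.

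First, the circularity: you "invoke Theorem \ref{t:pert}" as the key bijection between minimal factorizations and supporting pairs, but Theorem \ref{t:pert} \emph{is} the theorem being proved. What you actually want to invoke is the result recalled in Subsection~\ref{ss65} (the multivariable noncommutative version of \cite[Theorem 9.3]{BGKR}, via \cite[Section 4]{KVV1}): for a minimal realization $\rr=\delta+c^\ti L^{-1}b$ with $\delta\neq 0$, minimal factorizations of $\rr$ are in bijection with pairs $(\cS,\cS^\times)$ that are $A$-invariant, $A^\times$-invariant respectively, and direct-sum complementary.

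Second, and more substantively: that correspondence is between \emph{minimal factorizations} and \emph{supporting pairs}. It does not assert that every $A'$-invariant subspace $\cS$ is one side of such a pair --- establishing exactly that is the content of Theorem~\ref{t:pert}. Your proposal jumps from ``$\cS$ is $A'$-invariant'' to ``the partner of $\cS$ under this bijection,'' but nothing you say places $\cS$ inside the bijection's domain. The paper's proof closes this gap in several steps that your outline omits: (i) it proves directly, from observability/controllability, that the two diagonal blocks of the triangularized pencil $L$ are not jointly nilpotent, so the induced factorization of $\det L(\gX^{(n)})$ is genuinely nontrivial for large $n$; (ii) from this and Theorem~\ref{t:irrpoly}(1) it deduces that $f$ is not an atom; (iii) it invokes Proposition~\ref{p:polyfac} (polynomial factorizations coincide with minimal factorizations for $f\in\px$ with $f(0)\neq 0$), which you never mention but which is what converts being non-atomic into having a \emph{minimal} factorization; and (iv) it performs a chain argument with structure-preserving basis changes on the big realization \eqref{e:big} to swap the given $\cS$ into position $\cV_1$ of the flag produced by a minimal factorization, which is what finally shows $\cS$ arises as one side of a supporting pair. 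Your remark about ``tracking which block inherits $b_j$ and which inherits $c$'' gestures at step (i), but the actual hard step here is (iv); without it, the block-triangularization of $L$ by the arbitrary complement you chose only yields \emph{some} factorization of $\det L$, with no guarantee that it realizes $\cS$ through a minimal factorization of $f$. Finally, the clause that the block-triangular $L$ ``descends to a factorization $f=f_1f_2$ via the linearization correspondence'' conflates two different linearization schemes (Higman's trick from Lemma~\ref{l:sa} vs.\ FM-realizations) and would require first knowing the factorization of determinants is nontrivial.
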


Lastly we consider consequences of Theorem \ref{ta:2} in real algebraic geometry. A monic pencil $L$ is hermitian if its coefficients are hermitian matrices. Its {\it free spectrahedron} or {\it LMI domain} is defined as
$$\cD(L)=\bigcup_{n\in\N} \cD_n(L),\qquad \text{where}\quad
\cD_n(L)=\left\{X\in\herm{n}^g\colon L(X)\succeq0\right\}.$$
Here $\herm{n}$ denotes $n\times n$ hermitian matrices and $M\succeq0$ means that $M$ is positive semidefinite. Then $\cD_n(L)\subseteq \herm{n}^g$ is a convex set and its boundary is contained in $\fl_n(L)$. For $n\in\N$ denote
$$\partial^1\cD_n(L)=\left\{ X\in\cD_n(L)\colon \dim\ker L(X)=1\right\}.$$
The points in $\partial^1\cD_n(L)$ are often precisely the smooth points of the boundary of $\cD_n(L)$ \cite{Ren}. The unique-null-vector property makes them vital for optimization \cite{Ren}, Positivstellens\"atze in free real algebraic geometry \cite{HKN} and the study of free analytic maps between LMI domains \cite{AHKM}. Unfortunately, given a fixed $n\in\N$ it can happen that $\partial^1\cD_n(L)=\emptyset$ even for irreducible hermitian pencils, as a consequence of the failure of Kippenhahn's conjecture \cite{Laf}. However, we show that this cannot happen for every $n\in\N$ if $L$ is {\it LMI-minimal}, i.e., of minimal size among all hermitian pencils whose LMI domain equals $\cD(L)$.

\begin{thmA}\label{ta:4}
Let $L$ be an LMI-minimal hermitian pencil. Then there exists $n_0\in\N$ such that $\partial^1\cD_n(L)$ is Zariski dense in $\fl_n(L)$ for all $n\ge n_0$.
\end{thmA}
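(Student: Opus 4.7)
The plan is to decompose the hermitian pencil $L$ into irreducible summands, apply Theorem~\ref{ta:2} to each summand, produce a smooth boundary point of $\cD(L)$ with $1$-dimensional kernel in the relevant summand using LMI-minimality and a segment argument, and upgrade real smoothness to complex Zariski density via the identity principle.

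Since $L$ is hermitian, the orthogonal complement of any common invariant subspace of the coefficients is invariant, so $L$ decomposes as an orthogonal direct sum $L=L_1\oplus\cdots\oplus L_k$ of irreducible monic hermitian pencils. LMI-minimality forces each $L_i$ to be irredundant in $\cD(L)=\bigcap_i\cD(L_i)$: otherwise $\bigoplus_{j\neq i}L_j$ would be a strictly smaller pencil with the same LMI domain. Hence for each $i$ there is a tuple $X^{(i)}$, of some size $n_i$, with $L_j(X^{(i)})\succeq 0$ for $j\neq i$ but $L_i(X^{(i)})\not\succeq 0$. Applying Theorem~\ref{ta:2} to each $L_i$ produces $n_0$ such that $\fl_n(L_i)\subset\mat{n}^g$ is an irreducible complex hypersurface for all $n\geq n_0$; then $\fl_n(L)=\bigcup_i\fl_n(L_i)$ is its decomposition into irreducible components, and it suffices to show $\partial^1\cD_n(L)$ is Zariski dense in each $\fl_n(L_i)$.

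Fix $i$ and take $Y=0$ (which lies in the interior of $\cD(L)$ since $L(0)=I$), chosen of size $n_i$. Consider the segment $\gamma(t)=(1-t)Y+tX^{(i)}$: convexity gives $L_j(\gamma(t))\succ 0$ for all $j\neq i$ and $t\in[0,1)$, while $L_i(\gamma(0))\succ 0$ and $L_i(\gamma(1))\not\succeq 0$, so some $t^*\in(0,1)$ satisfies $L_i(\gamma(t^*))\succeq 0$ and $\det L_i(\gamma(t^*))=0$. Since $\det L_i$ is irreducible (hence squarefree) on generic matrices by Theorem~\ref{ta:2}, the smooth locus of $\fl_{n_i}(L_i)$ --- equivalently $\{X:\dim\ker L_i(X)=1\}$, as $\nabla\det L_i$ vanishes exactly where $\adj L_i$ does --- is Zariski open and dense in $\fl_{n_i}(L_i)$. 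A generic perturbation of $(Y,X^{(i)})$ within the appropriate open sets makes the segment $\gamma$ transversal to the real hypersurface at the first boundary crossing, so $\dim\ker L_i(\gamma(t^*))=1$; direct-summing with an interior tuple of $\cD(L)$ then produces $Z_n^{(i)}\in\partial^1\cD_n(L)\cap\fl_n(L_i)$ for every $n\geq n_0$ (enlarged above $\max_i n_i$).

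At $Z=Z_n^{(i)}$ the rank of $L_i(Z)$ equals $dn-1$, so $\adj L_i(Z)\neq 0$ and $Z$ is a smooth point of the complex hypersurface $\fl_n(L_i)$. Openness of the conditions $\dim\ker L_i(X)=1$ and $L_j(X)\succ 0$ for $j\neq i$ implies $\partial^1\cD_n(L)\cap\fl_n(L_i)$ contains a real-analytic neighborhood $U$ of $Z$ in the real hypersurface $\{X\in\herm{n}^g:\det L_i(X)=0\}$; this $U$ is a smooth real submanifold of real dimension $gn^2-1=\dim_\C\fl_n(L_i)$, whose real tangent space at $Z$ is totally real (since $\herm{n}^g\cap i\herm{n}^g=\{0\}$) of maximal real dimension inside $T_Z\fl_n(L_i)$. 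By the classical identity principle for totally real submanifolds of complex manifolds, any polynomial vanishing on $U$ vanishes in a complex neighborhood of $Z$ in $\fl_n(L_i)$, hence on all of $\fl_n(L_i)$ by irreducibility; thus $\partial^1\cD_n(L)$ is Zariski dense in $\fl_n(L_i)$. The hardest step is the transversality argument in the preceding paragraph: Theorem~\ref{ta:2} ensures the higher-kernel locus is a proper closed subvariety of $\fl_n(L_i)$, but positioning the first boundary crossing of $\gamma$ in the smooth locus requires delicately using LMI-minimality together with the hermitian structure --- analogous behavior can fail for small $n$ via the counterexamples to Kippenhahn's conjecture mentioned in the introduction, and it is precisely the irredundancy given by LMI-minimality that allows us to break these pathologies in the regime $n\geq n_0$.
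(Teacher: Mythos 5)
Your decomposition into irreducible hermitian blocks, the irredundancy consequence of LMI-minimality, the segment argument from an interior point, and the totally-real-submanifold passage from a piece of the real boundary to Zariski density in the complex hypersurface all parallel ingredients in the paper (the segment and real-dimension argument corresponds to Proposition~\ref{p:pd-min}, the passage to complex Zariski density plays the role that \cite[Proposition 2.8.2]{BCR} plays there). The gap is concentrated exactly where you flag it, and it is not merely ``delicate'' --- the step as written does not go through and the reason you give for it is wrong.

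You assert that a generic perturbation of the endpoints makes the segment transversal to the real hypersurface at the first crossing, and that therefore $\dim\ker L_i(\gamma(t^*))=1$, and you attribute the validity of this in the regime $n\ge n_0$ to the irredundancy furnished by LMI-minimality. Neither half survives scrutiny. First, transversality to $\{\det L_i=0\}$ at a point presupposes that the hypersurface is smooth there; if $\nabla\det L_i$ vanishes identically on the \emph{real} locus $\flh_n(L_i)$ (equivalently, if every hermitian $X$ with $L_i(X)$ singular has $\dim\ker L_i(X)\ge 2$), then no perturbation of the endpoints can produce a simple zero of $t\mapsto\det L_i(\gamma(t))$ at the first crossing. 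Second, this pathology is precisely the failure of Kippenhahn's conjecture, and it is completely unrelated to irredundancy: Laffey's example is a \emph{single} irreducible hermitian pencil (so $\ell=1$, irredundancy is vacuous) for which $\fl^1_n(L)\cap\herm{n}^g=\emptyset$ at a fixed level $n$. LMI-minimality tells you each component $\fl_n(L_i)$ is touched by the boundary $\partial\cD_n(L)$; it says nothing about avoiding higher-rank kernels on that component. So ``it is precisely the irredundancy that breaks these pathologies'' is false, and with it the perturbation step collapses, because at level $n_i$ there may simply be no hermitian tuple in $\fl^1_{n_i}(L_i)$ at all that you could perturb toward.

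What actually fills this hole in the paper is Lemma~\ref{l:jac} together with Theorem~\ref{t:fl-min}: Jacobi's formula identifies $\fl^\one_n(L)$ as the non-vanishing locus of $\tr\bigl(\adj L(\gX^{(n)})(\sum_j A_j\otimes\gX_j^{(n)})\bigr)$ on $\fl_n(L)$, and a degree comparison (same degree as $\det L(\gX^{(n)})$ but constant term $0$ versus $1$) shows this trace polynomial is not a multiple of $\det L(\gX^{(n)})$; combined with irreducibility of $\det L_i(\gX^{(n)})$ for large $n$ (Theorem~\ref{t:main}) and square-freeness of $\det L(\gX^{(n)})$ for FL-minimal $L$ (Remark~\ref{r:rad}), this is what forces $\fl^\one_n(L)\neq\emptyset$ and makes $\fl^1_n(L)$ Zariski dense and open for large $n$. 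Note also that the paper avoids your transversality problem altogether by decoupling the two density statements: Proposition~\ref{p:pd-min} shows that the full real boundary piece swept out by your segment construction has real dimension $gn^2-1$ and thus (by \cite[Theorem 2.8.8, Proposition 2.8.2]{BCR}) is Zariski dense in $\fl_n(L_i)$ --- with no smoothness requirement on the crossing point --- and then Corollary~\ref{c:pd-min} simply intersects this dense set with the dense \emph{open} set $\fl^1_n(L)$. Replacing your perturbation step with this decoupling, and supplying the Jacobi-formula/degree argument for $\fl^1_n(L)$, would make your proof correct.
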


Theorem \ref{ta:4} can be viewed as a quantitative solution of the quantum Kippenhahn conjecture and is proved as Corollary \ref{c:pd-min} below.

\subsection*{Acknowledgments}
We wish to thank Claudio Procesi, who read an early version of the manuscript, for his comments.
The second named author also thanks \v Spela \v Spenko for patiently sharing her expertise in invariant theory.

\section{Preliminaries}\label{s2}

In this section we gather results about free loci and polynomial invariants for the general linear group acting on matrix tuples by conjugation that will be used in the sequel.

Let $\kk$ be algebraically closed field of characteristic 0. Throughout the paper we use the following convention. If $\mathscr{S}(n)$ is a statement depending on $n\in\N$, then ``$\mathscr{S}(n)$ holds for large $n$'' means ``there exists $n_0\in\N$ such that $\mathscr{S}(n)$ holds for all $n\ge n_0$''.

\subsection{Free loci of matrix pencils}

For $g\in\N$ let $\ulx=(x_1,\dots,x_g)$ be a tuple of freely noncommuting variables. If $A_1,\dots,A_g\in\mat{d}$, then
$$L=I_d-\sum_{j=1}^g A_jx_j$$
is a {\bf monic linear pencil of size $d$}. For $X\in\mat{n}^g$ let
$$L(X)=I_{dn}-\sum_j A_{j=1}^g\otimes X_j \in\mat{dn},$$
where $\otimes$ denotes the Kronecker product. Let $\cM^g=\bigcup_{n\in\N}\mat{n}^g$. The set
$$\fl(L)=\bigcup_{n\in\N}\fl_n(L)\subset\cM^g, \qquad \text{where}\quad
\fl_n(L)=\left\{X\in\mat{n}^g\colon \det L(X)=0\right\},$$
is the {\bf free locus of $L$}. We review some terminology and facts about free loci from \cite{KV} that will be frequently used throughout the paper.

\begin{enumerate}
	\item $\fl(L)=\emptyset$ if and only if $A_1,\dots,A_g$ are jointly nilpotent by \cite[Corollary 3.4]{KV}.
	\item If $A_1,\dots,A_g$ generate $\mat{d}$ as a $\kk$-algebra, we say that $L$ is an {\bf irreducible} pencil. If $L_1$ and $L_2$ are irreducible and $\fl(L_1)\subseteq \fl(L_2)$, then $\fl(L_1)= \fl(L_2)$ (follows from \cite[Theorem 3.6]{KV} because a surjective homomorphism from a simple algebra is an isomorphism) and moreover $L_1$ and $L_2$ are similar, i.e., they differ only by a basis change on $\C^d$, by \cite[Theorem 3.11]{KV}.
	\item A free locus is {\bf irreducible} if it is not a union of smaller free loci. By \cite[Proposition 3.12]{KV}, a free locus is irreducible if and only if it is a free locus of some irreducible pencil.
	\item By applying Burnside's theorem \cite[Corollary 5.23]{Bre} on the existence of invariant subspaces of the $\kk$-algebra generated by $A_1,\dots,A_g$ it follows that every monic pencil $L$ is similar to a pencil of the form
	\begin{equation}\label{e:05}
	\begin{pmatrix}
	L_1 & \star & \cdots & \star \\
	0 & \ddots & \ddots & \vdots \\
	\vdots & \ddots & \ddots & \star \\
	0 & \cdots & 0 & L_\ell
	\end{pmatrix},
	\end{equation}
	where for every $k$, $L_k=I$ or $L_k$ is an irreducible pencil.
	\item Finally, we say that $L\neq I$ is {\bf FL-minimal} (free locus minimal) if it is of minimal size among pencils $L'$ with $\fl(L')=\fl(L)$. An $L$ of the form \eqref{e:05} is FL-minimal if and only if the $L_k$ are pairwise non-similar irreducible pencils. Furthermore, 
the diagonal blocks of an FL-minimal pencil
of the form \eqref{e:05} are unique up to a basis change by \cite[Theorem 3.11]{KV}.
\end{enumerate}

\subsection{Simultaneous conjugation of matrices}

Consider the action of $\gl{n}$ on the space $\mat{n}^g$ given by
\begin{equation}\label{e:00}
\sigma\cdot (X_1,\dots,X_g) = (\sigma X_1\sigma^{-1},\dots,\sigma X_g\sigma^{-1}),\qquad \sigma\in\gl{n}.
\end{equation}
The coordinate ring of $\mat{n}^g$ is the polynomial ring $\kk[\ulxi]$ in $gn^2$ commuting variables $\gx_{j\imath\jmath}$ for $1\le j \le g$ and $1\le \imath,\jmath\le n$. For $1\le j\le g$ let $\gX^{(n)}_j=(\gx_{j\imath\jmath})_{\imath,\jmath}$ be an $n\times n$ generic matrix and write  $\gX^{(n)}=(\gX^{(n)}_1,\dots,\gX^{(n)}_g)$. Viewing $p\in\kk[\ulxi]$ as a polynomial in the entries of $\gX^{(n)}$, the action \eqref{e:00} induces an action of $\gl{n}$ on $\kk[\ulxi]$ defined by
\begin{equation}\label{e:01}
\sigma \cdot p=p(\sigma^{-1}\cdot\gX^{(n)}), \qquad \sigma\in\gl{n}, \ p\in\kk[\ulxi].
\end{equation}
The subring of $\kk[\ulxi]$ of invariants for this action is denoted $\kk[\ulxi]^{\gl{n}}$. By \cite[Theorem 1.3]{Pro} it is generated by $\tr(w(\gX^{(n)}))$ for $w\in\mx\setminus\{1\}$. Here $\tr$ denotes the usual trace, and $\mx$ is the free monoid generated by the freely noncommuting variables $x_1,\dots,x_g$. The elements of $\kk[\ulxi]^{\gl{n}}$ are therefore called {\it pure trace polynomials}. For example, $2-\tr(\gX_1^{(n)})\tr(\gX_2^{(n)})+\tr(\gX_1^{(n)2}\gX_2^{(n)3}\gX_1^{(n)}\gX_2^{(n)})$ is a pure trace polynomial.

In the next section we require the following two lemmas. While they are known to specialists, we provide their proofs to keep the presentation self-contained.

\begin{lem}\label{l:invfac}
Every factor of a $\gl{n}$-invariant polynomial is $\gl{n}$-invariant.
\end{lem}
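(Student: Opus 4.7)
The plan is to exploit that $\gl{n}$ acts on $\kk[\ulxi]$ by $\kk$-algebra automorphisms, combined with unique factorization in the polynomial ring. Write the $\gl{n}$-invariant polynomial $p$ as a product of irreducibles $p=p_1\cdots p_r$. Since each $\sigma\in\gl{n}$ sends irreducibles to irreducibles and fixes $p$, unique factorization in $\kk[\ulxi]$ forces
$$\sigma\cdot p_i = c_{i,\sigma}\,p_{\pi_\sigma(i)}$$
for some scalars $c_{i,\sigma}\in\kk^\times$ and some permutation $\pi_\sigma\in S_r$.

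The main step is to show $\pi_\sigma=\mathrm{id}$ for every $\sigma$. The stabilizer $H_i=\{\sigma\in\gl{n}\colon \sigma\cdot\kk p_i=\kk p_i\}$ is a Zariski-closed subgroup of $\gl{n}$, and the $\gl{n}$-orbit of $\kk p_i$ has cardinality $[\gl{n}:H_i]\le r<\infty$. Since $\gl{n}$ is an irreducible (hence connected) algebraic variety, it cannot be written as a finite disjoint union of proper closed cosets of a subgroup, so $H_i=\gl{n}$. Consequently $\sigma\cdot p_i=\chi_i(\sigma)\,p_i$ for a rational character $\chi_i\colon\gl{n}\to\kk^\times$.

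Finally, I would rule out nontrivial $\chi_i$ using the center. Every rational character of $\gl{n}$ is a power of the determinant, so $\chi_i=\det^{k_i}$ for some integer $k_i$. The center $\kk^\times\cdot I_n$ acts trivially on $\mat{n}^g$ by conjugation, hence trivially on $\kk[\ulxi]$, so $t^{nk_i}=\chi_i(tI_n)=1$ for every $t\in\kk^\times$. This forces $k_i=0$, whence each $\chi_i$ is trivial and each $p_i$ is $\gl{n}$-invariant. The main obstacle is the middle step, where one has to upgrade a purely set-theoretic permutation of the irreducible factors to the trivial permutation by invoking connectedness of $\gl{n}$; the outer steps are essentially bookkeeping, using unique factorization and the fact that the only rational character of $\gl{n}$ trivial on the scalar matrices is the trivial character.
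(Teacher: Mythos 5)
Your argument is correct and follows essentially the same strategy as the paper's proof. Both proofs first pass a single irreducible factor $q$ through $\sigma$, observe that $\sigma\cdot q$ is again an irreducible factor of $p$, and use irreducibility (equivalently, connectedness) of the algebraic group $\gl{n}$ to rule out a nontrivial permutation of factors, reducing to $\sigma\cdot q=\chi(\sigma)q$ for a character $\chi$; your stabilizer-of-finite-index argument and the paper's ``image of a regular map from an irreducible variety lies in one component'' argument are two phrasings of the same fact. The final step differs mildly in presentation: the paper notes directly that $\chi$ is trivial on $\SL_n(\kk)$ (since $\SL_n$ is generated by commutators) and on the scalar matrices, and that these generate $\gl n$, whereas you invoke the classification of rational characters of $\gl{n}$ as powers of $\det$ and then evaluate on the center. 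Your route relies on one more piece of input (the character classification), but it is a standard fact and the conclusion is the same; both arguments hinge on the center acting trivially by conjugation. No gaps.
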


\begin{proof}
Let $q$ be an irreducible factor of $p\in\kk[\ulxi]^{\gl{n}}$. Let $\sigma\in\gl{n}$; since $\sigma\cdot p=p$ and $\sigma\cdot q$ is irreducible, we see that $\sigma\cdot q$ is again an irreducible factor of $p$. Let $\cS$ be the set of all nonzero scalar multiples of irreducible factors of $p$. Then $\cS$ with the Zariski topology is homeomorphic to a disjoint union of copies of $\kk^*=\kk\setminus\{0\}$. The algebraic group $\gl{n}$ is irreducible, and the map
$$f:\gl{n}\to \cS,\qquad \sigma\mapsto \sigma \cdot q$$
is regular and $f(\id)=q$. Therefore the image of $f$ lies in the irreducible component of $\cS$ containing $q$, so for every $\sigma\in\gl{n}$ we have $\sigma \cdot q=\lambda_\sigma q$ for some $\lambda_\sigma\in\kk^*$. Next we observe that the map
$$\lambda\colon\gl{n}\to \kk^*,\qquad \sigma\mapsto \lambda_{\sigma}$$
is a group homomorphism. Note that $\lambda(\kk^* I)=\{1\}$ by the nature of our action; also $\lambda(\SL_n(\kk))=\{1\}$ because $\SL_n(\kk)$ is generated by multiplicative commutators. Since $\gl{n}$ is a semidirect product of $\SL_n(\kk)$ and $\kk^*$, we conclude that $\lambda$ is constantly equal to 1, so $q$ is a $\gl{n}$-invariant.
\end{proof}

\begin{lem}\label{l:triang}
Let $X\in \mat{n}^g$ and $X'\in(\kk^{n'\times n})^g$. If $p$ is $\gl{n+n'}$-invariant, then
$$p\begin{pmatrix} 0 & X' \\ 0 & X \end{pmatrix}=p\begin{pmatrix} 0 & 0\\ 0 & X \end{pmatrix}.$$ 
\end{lem}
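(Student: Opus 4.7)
My approach is to exhibit a one-parameter subgroup of $\gl{n+n'}$ that, acting by conjugation, continuously deforms the block matrix $\begin{pmatrix} 0 & X' \\ 0 & X\end{pmatrix}$ into $\begin{pmatrix} 0 & 0 \\ 0 & X\end{pmatrix}$, and then use invariance of $p$ together with the fact that a polynomial which is constant on $\kk^\ast$ is constant on $\kk$.

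Concretely, for $t\in\kk^\ast$ I would set
$$\sigma_t=\begin{pmatrix} tI_{n'} & 0 \\ 0 & I_n\end{pmatrix}\in\gl{n+n'}.$$
A direct block computation gives
$$\sigma_t\begin{pmatrix} 0 & X'_j \\ 0 & X_j\end{pmatrix}\sigma_t^{-1}=\begin{pmatrix} 0 & tX'_j \\ 0 & X_j\end{pmatrix}$$
for each $j=1,\dots,g$, so denoting $Y=\bigl(\begin{smallmatrix} 0 & X' \\ 0 & X\end{smallmatrix}\bigr)$ one has $\sigma_t\cdot Y=\bigl(\begin{smallmatrix} 0 & tX' \\ 0 & X\end{smallmatrix}\bigr)$. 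The $\gl{n+n'}$-invariance of $p$ then gives $p(\sigma_t\cdot Y)=p(Y)$ for every $t\in\kk^\ast$.

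The function $t\mapsto p(\sigma_t\cdot Y)$ is a polynomial in $t$, since the entries of $\sigma_t\cdot Y$ depend polynomially (in fact linearly) on $t$. A polynomial in one variable that is constant on the cofinite set $\kk^\ast$ (using that $\kk$ is infinite, being algebraically closed of characteristic $0$) is constant on all of $\kk$. Setting $t=0$ yields
$$p\begin{pmatrix} 0 & 0 \\ 0 & X\end{pmatrix}=p\begin{pmatrix} 0 & X' \\ 0 & X\end{pmatrix},$$
which is the desired identity.

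\textbf{Where any difficulty might lie.} There is essentially no obstacle; the only thing to be careful about is that we are \emph{not} trying to take the limit of $\sigma_t$ itself (which would leave $\gl{n+n'}$), but rather of the polynomial $p$ evaluated on the conjugate tuple, which extends regularly to $t=0$. The block structure is chosen precisely so that the $(1,1)$ block of $Y$ is already zero; otherwise the conjugation $\sigma_t(\cdot)\sigma_t^{-1}$ would not extend continuously to $t=0$ as a polynomial in $t$ (since a nonzero $(2,1)$ block would produce a factor $t^{-1}$). This is why the statement explicitly zeros out the lower-left and upper-left corners.
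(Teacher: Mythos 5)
Your proof is correct, but it takes a genuinely different route from the paper's. The paper's proof invokes Procesi's first fundamental theorem (already cited in Section~\ref{s2}) that $\kk[\ulxi]^{\gl{n+n'}}$ is generated by the trace polynomials $\tr(w(\gX^{(n+n')}))$, and then simply observes that every word in block upper-triangular matrices is again block upper-triangular, whence $\tr$ depends only on the diagonal blocks. Your argument instead uses a one-parameter subgroup $\sigma_t=\diag(tI_{n'},I_n)$ and the fact that the resulting regular function $t\mapsto p(\sigma_t\cdot Y)$ is constant on $\kk^*$, hence extends constantly to $t=0$; this is a standard orbit-closure (Hilbert--Mumford type) degeneration argument. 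The two buy different things: your version is more elementary in that it needs no structure theory of the invariant ring and works for any $\gl{n+n'}$-invariant regular function, not only pure trace polynomials; the paper's version is shorter given the machinery it has already set up and makes the dependence on the diagonal blocks completely explicit. You are also right to flag that the essential point is that $\sigma_t\cdot Y$ itself stays polynomial in $t$ because the lower-left and upper-left blocks of $Y$ vanish; a nonzero lower-left block would contribute a $t^{-1}$ and the degeneration would fail.
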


\begin{proof}
For every $w\in\mx\setminus\{1\}$ we clearly have
$$w\begin{pmatrix} 0 & X' \\ 0 & X \end{pmatrix}=\begin{pmatrix} 0 & Y_w\\ 0 & w(X) \end{pmatrix}$$
for some $Y_w\in\kk^{n'\times n}$ and hence
$$\tr\left(w\begin{pmatrix} 0 & X' \\ 0 & X \end{pmatrix}\right)
=\tr \begin{pmatrix} 0 & Y_w\\ 0 & w(X) \end{pmatrix}
=\tr(w(X))
=\tr\left(w\begin{pmatrix} 0 & 0 \\ 0 & X \end{pmatrix}\right).$$
The statement now follows because $\kk[\ulxi]^{\gl{n+n'}}$ is generated by $\tr(w(\gX^{(n+n')}))$.
\end{proof}

\section{Determinant of an irreducible pencil}\label{s3}

Let $L$ be a monic pencil and let $\gX^{(n)}$ be a $g$-tuple of $n\times n$ generic matrices. In Subsection \ref{ss32} we prove
our first main result,
 Theorem \ref{t:main}. which states that for an irreducible pencil $L$, the commutative polynomial $\det L(\gX^{(n)})$ is irreducible for large $n$. The consequences for the locus $\fl_n(L)$ are given in Subsection \ref{ss33}.

\subsection{Degree growth}\label{ss31}

Let $\GM_n(g)\subseteq \opm_n(\kk[\ulxi])$ be the ring of $n\times n$ generic matrices, i.e., the unital $\kk$-algebra generated by $\gX_1^{(n)},\dots,\gX_g^{(n)}$ \cite[Section 5]{For}. Furthermore, let $\UD_n(g)\subseteq \opm_n(\kk(\ulxi))$ be the universal division algebra of degree $n$, which is the ring of central quotients of $\GM_n(g)$.

\begin{lem}\label{l:deg}
Let $L$ be a monic pencil of size $d$ and
$$f_n=\det L\left(\gX^{(n)}\right)\in \kk[\ulxi]$$
for $n\in\N$. Then there exists $d_L\le d$ such that $\deg f_n=d_L n$ for all $n\ge d^2-1$.
\end{lem}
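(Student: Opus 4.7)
The plan is to reduce to the case of an irreducible pencil via item (4) of Section \ref{s2}, and then pin down the top-degree piece of $\det L(\gX^{(n)})$.

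Since $\det$ is similarity-invariant, I would replace $L$ by a similar pencil in the block upper triangular form \eqref{e:05}, whose diagonal blocks $L_1,\dots,L_\ell$ are each either $I$ or irreducible of some size $d_k$. Then $\det L(X)=\prod_k \det L_k(X)$, so $\deg f_n=\sum_k \deg\det L_k(\gX^{(n)})$; trivial blocks contribute $0$, and setting $d_L:=\sum_{k:\,L_k\text{ irred.}} d_k\le d$, it suffices to prove that for each irreducible pencil $L_k$ of size $d_k$ one has $\deg\det L_k(\gX^{(n)})=d_k n$ for $n\ge d^2-1$.

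Now suppose $L=I-\sum_j A_j x_j$ is irreducible of size $d$. The entries of $L(\gX^{(n)})$ have degree $\le 1$ in $\ulxi$, so $\deg f_n\le dn$. To get equality I would decompose $\det L(X)=\sum_{k=0}^{dn}c_k(X)$ into $X$-homogeneous parts; the top piece equals $c_{dn}(X)=(-1)^{dn}\det\bigl(\sum_j A_j\otimes X_j\bigr)$. Thus $\deg f_n=dn$ is equivalent to showing that $c_{dn}(\gX^{(n)})$ is a nonzero polynomial in $\ulxi$, i.e.\ to exhibiting a specialization $X\in\mat{n}^g$ at which the $dn\times dn$ matrix $\sum_j A_j\otimes X_j$ is invertible; equivalently, the subspace $\spa\{A_j\}\otimes\mat{n}\subset\mat{dn}$ is not contained in the singular locus.

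The main obstacle is constructing such an $X$ uniformly for $n\ge d^2-1$. The naive scalar specialization $\gX^{(n)}_j=\alpha_j I_n$ produces $(\sum_j\alpha_j A_j)\otimes I_n$ and works only when $\spa\{A_j\}$ already contains an invertible matrix, which can fail for irreducible $L$: e.g.\ when $\{A_1,A_2,A_3\}$ is a basis of antisymmetric $3\times3$ matrices, the tuple generates $\mat{3}$ but every element of $\spa\{A_j\}$ is singular. Instead, I would exploit irreducibility more deeply: since $A_1,\dots,A_g$ generate $\mat{d}$, pick words $w_1(A),\dots,w_{d^2}(A)$ of length $\le d^2-1$ forming a basis of $\mat{d}$, and build $X_j\in\mat{n}$ for $n\ge d^2-1$ whose block structure encodes this basis so that $\sum_j A_j\otimes X_j$ realizes an invertible companion-type matrix; the bound $d^2-1$ enters precisely through the required word length. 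An alternative, more conceptual route is via the universal division algebra $\UD_n(g)$: for $n$ sufficiently large, $\mat{d}(\UD_n(g))$ is central simple of PI-degree $dn$, $\det L(\gX^{(n)})$ coincides with the reduced norm of $L(\gX^{(n)})$ in this algebra, and its degree in $\ulxi$ grows linearly in $n$ with slope $d_L$.
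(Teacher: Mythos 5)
Your plan breaks down at the very step you announce as the key reduction. You assert that for an irreducible pencil $L_k$ of size $d_k$ one has $\deg\det L_k(\gX^{(n)})=d_k n$, so that the slope $d_L$ would be the sum of the sizes of the irreducible diagonal blocks. This is false, and the paper gives an explicit counterexample: in Example \ref{ex0}, $A_1,A_2$ generate $\mat{3}$, so $L=I-A_1x_1-A_2x_2$ is irreducible of size $3$, yet $\det L(\gX^{(n)})=\det\bigl((I-\gX_1^{(n)})^2-(\gX_2^{(n)})^2\bigr)$ has degree $2n$, not $3n$. The top-degree form $\det\bigl(\sum_j A_j\otimes \gX_j^{(n)}\bigr)$ vanishes identically there for every $n$: the plane $U=\spa\{e_1,e_3\}$ satisfies $A_1U+A_2U=\spa\{e_1+e_2\}$, so $U$ is a shrunk subspace and no matrix in $\spa\{A_1,A_2\}\otimes\mat{n}$ is ever invertible. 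Irreducibility (generating the full matrix algebra) simply does not force the noncommutative rank of the linear part to be maximal, and this is exactly why the lemma is stated with ``there exists $d_L\le d$'' rather than $d_L=d$.

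The repair you sketch also does not work. Words $w(A)$ of length up to $d^2-1$ generate $\mat{d}$ as an algebra, but the matrix $\sum_j A_j\otimes X_j$ whose invertibility you need is \emph{linear} in the $A_j$; choosing the $X_j$ to have companion-like block structure does not let you synthesize higher-degree words out of a linear combination, so you cannot escape the span $\spa\{A_j\}\otimes\mat{n}$. Your parenthetical ``alternative, more conceptual route'' via $\UD_n(g)$ and reduced norms is the direction the paper actually takes, but as stated it is a restatement of the conclusion rather than an argument: the content of the lemma is precisely that the degree is exactly linear with a slope that stabilizes already at $n\ge d^2-1$, and that stabilization is a nontrivial theorem. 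The paper's proof constructs an auxiliary non-monic pencil $\tilde L$, relates $\deg f_n$ to $\rk_{\kk(\ulxi)}\tilde L(\gX^{(n)})$ via rank over $\UD_n(g)$, and then invokes \cite[Proposition 2.10]{DM} (Derksen--Makam) to get $\rk_{\kk(\ulxi)}\tilde L(\gX^{(n)})=d'n$ for all $n\ge d^2-1$. Without that external input or an equivalent, the stabilization claim is unsupported.
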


\begin{proof}
The case $d=1$ is clear, so assume $d\ge2$. Let $\Lambda=I-L$ and define a non-monic matrix pencil $\tilde{L}$ of size $d^2$ as
$$\tilde{L}=\begin{pmatrix}
0 & -\Lambda & 0 & \cdots & \\
\vdots & I & \ddots & & \vdots \\
& & \ddots & -\Lambda & 0\\
 & & & I & -\Lambda \\
\Lambda & & \cdots & 0 & I 
\end{pmatrix}.$$
It is easy to check that
\begin{equation}\label{e:02}
\begin{pmatrix}
I & \Lambda & \cdots & \Lambda^{d-1} \\
 & \ddots & \ddots & \vdots \\
 & & \ddots & \Lambda \\
 & & & I
\end{pmatrix}
\cdot \tilde{L} =
\begin{pmatrix}
\Lambda^d & & & \\
\Lambda^{d-1} & I & & \\
\vdots & & \ddots &\\
\Lambda &  &  & I
\end{pmatrix}.
\end{equation}
Next we make a few simple observations. Firstly, if $A$ is an $a\times a$ matrix over a (commutative) field $F$, then the degree of the univariate polynomial $\det(I-tA)\in F[t]$ equals the rank of $A^a$ over $F$. Secondly, if $B$ is a $b\times b$ matrix over a skew field, then the rank of $B^{b_1}$ equals the rank of $B^b$ for every $b_1\ge b$; here the rank of a matrix over a skew field equals the dimension of its range as a linear operator over the skew field. Finally, if $C$ is a $c\times c$ matrix over $\UD_n(g)$ of rank $r$, then it is equivalent to $I^{\oplus r}\oplus 0^{\oplus(c-r)}$ over $\UD_n(g)$, so $C$ is of rank $rn$ as a $cn\times cn$ matrix over $\kk(\ulxi)$.

Let $n\in\N$ be arbitrary. Viewing $\Lambda(\gX^{(n)})$ as a $d\times d$ matrix over $\UD_n(g)$, the preceding observations and \eqref{e:02} imply
\begin{align*}
\deg f_n
& = \deg_t \left(I-t\Lambda\left(\gX^{(n)}\right)\right) \\
& = \rk_{\kk(\ulxi)} \Lambda\left(\gX^{(n)}\right)^{dn} \\
& = \frac{1}{n}\rk_{\UD_n(g)} \Lambda\left(\gX^{(n)}\right)^{dn} \\
& = \frac{1}{n}\rk_{\UD_n(g)} \Lambda\left(\gX^{(n)}\right)^d \\
& = \rk_{\kk(\ulxi)} \Lambda\left(\gX^{(n)}\right)^d \\
& = \rk_{\kk(\ulxi)} \tilde{L}\left(\gX^{(n)}\right) - (d-1)d n.
\end{align*}
By the proof of \cite[Proposition 2.10]{DM}, there is $d'\in\N$ such that $\rk_{\kk(\ulxi)} \tilde{L}(\gX^{(n)})=d' n$ for all $n\ge d^2-1$. Hence $d_L=d'-(d-1)d$.
\end{proof}

\begin{exa}\label{ex0}
Let
$$A_1=\begin{pmatrix}1 & 0 & 0 \\ 1 & 1 & 0 \\ 0 & 0 & 0\end{pmatrix}, \qquad
A_2=\begin{pmatrix}0 & 0 & 1 \\ 0 & 0 & 1 \\ 0 & 1 & 0\end{pmatrix}.$$
One can check that $A_1$ and $A_2$ generate $\mat{3}$. If $L=I-A_1x_1-A_2x_2$, then using Schur complements we see that
$$\det L\left(\gX^{(n)}\right)=
\det\begin{pmatrix}
I-\gX_1^{(n)} & -(\gX_2^{(n)})^2 \\ -\gX_1^{(n)} & I-\gX_1^{(n)}-(\gX_2^{(n)})^2
\end{pmatrix}=
\det \left((I-\gX_1^{(n)})^2-(\gX_2^{(n)})^2\right)$$
is of degree $2n$ for every $n\in\N$. Therefore $d_L=2<3$.
\end{exa}

\subsection{Eventual irreducibility}\label{ss32}

Let $\gY^{(n)}$ be an $n\times n$ generic matrix whose entries are independent of the entries in $\gX^{(n)}_j$ (that is, we introduce $n^2$ new variables to form $\gY^{(n)}$). The following lemma is a key technical tool for proving Theorem \ref{t:main} below.

\begin{lem}\label{l:main}
Let $L=I-\sum_{j=1}^g A_jx_j$ be of size $d$ and $n_0\ge d^2-1$. Fix $1\le j',j''\le g$ and assume that
$$\det\left(L\left(\gX^{(n)}\right)-A_{j'}A_{j''} \otimes \gY^{(n)}\right)$$
is an irreducible polynomial (in $(g+1)n^2$ variables) for every $n\ge n_0$. Then $\det L\left(\gX^{(n)}\right)$ is an irreducible polynomial (in $gn^2$ variables) for every $n\ge 2n_0$.
\end{lem}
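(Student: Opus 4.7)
My plan is to argue by contradiction. Suppose $f_n := \det L(\gX^{(n)}) = pq$ with $p, q \in \kk[\ulxi^{(n)}]$ non-constant, for some $n \ge 2n_0$. Since $f_n$ is manifestly $\gl{n}$-invariant, Lemma \ref{l:invfac} shows $p$ and $q$ are too.

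The central device is a substitution $\phi: \kk[\ulxi^{(n)}] \to \kk[\ulxi^{(n_0)}, \mathbf{y}]$ (with $\mathbf{y}$ a collection of fresh commuting variables) that sends $f_n$ to the irreducible polynomial $\tilde f_{n_0} := \det(L(\gX^{(n_0)}) - A_{j'}A_{j''}\otimes \gY^{(n_0)})$ up to relabeling. Writing $n = n_0 + k$ so that $k = n - n_0 \ge n_0$, and taking $J = (I_{n_0} \mid 0) \in \kk^{n_0 \times k}$, I set
\[
\phi(\gX_{j'}^{(n)}) = \begin{pmatrix} 0 & 0 \\ J & \gX_{j'}^{(n_0)} \end{pmatrix}, \quad \phi(\gX_{j''}^{(n)}) = \begin{pmatrix} 0 & Y \\ 0 & \gX_{j''}^{(n_0)} \end{pmatrix}, \quad \phi(\gX_j^{(n)}) = \begin{pmatrix} 0 & 0 \\ 0 & \gX_j^{(n_0)} \end{pmatrix}
\]
for $j \neq j', j''$, where $Y$ is a $k \times n_0$ matrix of entries from $\mathbf{y}$. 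A Schur-complement computation with respect to the top-left $I_{dk}$ block of $L(\phi(\gX^{(n)}))$ (after permuting to the $(dk,dn_0)$-block form) yields
\[
\phi(f_n) = \det(L(\gX^{(n_0)}) - A_{j'}A_{j''} \otimes JY),
\]
and since $J$ has full row rank $n_0$, the matrix $JY$ is a generic $n_0 \times n_0$ matrix in the entries of $Y$, so this polynomial equals $\tilde f_{n_0}$ up to a linear change of variables. The factorization $\phi(p)\phi(q) = \tilde f_{n_0}$ together with irreducibility of $\tilde f_{n_0}$ forces, after possibly swapping $p$ and $q$, that $\phi(q) = c \in \kk^*$.

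Next I would strengthen $\phi(q) = c$ to a statement about $q$ itself by deformation. Define $\phi_t$ by replacing $J$ with $tJ$. Conjugation of $\phi_1$ by $\operatorname{diag}(t^{-1}I_k, I_{n_0}) \in \gl{n}$ rescales $J$ to $tJ$ and $Y$ to $t^{-1}Y$, and the $\gl{n}$-invariance of $q$ then gives $\phi_t(q) = c$ for all $t \in \kk^*$; polynomial dependence in $t$ extends this to $t = 0$. At $t = 0$ the substitution is block upper triangular with vanishing top-left block, so Lemma \ref{l:triang} applied to $q$ gives
\[
\bar q(\gX^{(n_0)}) := q\!\left(\begin{smallmatrix} 0 & 0 \\ 0 & \gX^{(n_0)} \end{smallmatrix}\right) = c.
\]
To finish, one must promote this constancy on the $n_0$-embedding (together with analogous constancies obtained by varying the size $m \in [n_0, \lfloor n/2\rfloor]$ in the substitution and by reversing the roles of $p$ and $q$) to global constancy of $q$, using $\bar p^{(s)}\bar q^{(s)} = f_s$ for $s \in \{n_0,\dots,\lfloor n/2\rfloor\}$ and the degree formula $\deg f_s = d_L s > 0$ from Lemma \ref{l:deg} to obtain the desired contradiction with the nontriviality of $q$.

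The main obstacle I expect is precisely this final step. A naive geometric argument (showing $q$ is constant on the $\gl{n}$-orbit of the image of $\phi$ and hence on all of $\mat{n}^g$) fails for $g \ge 2$, since the orbit's dimension $(g+1)n_0^2 + n^2$ is strictly less than $\dim \mat{n}^g = gn^2$ in the relevant regime. The argument must therefore exploit the trace-polynomial description of $\gl{n}$-invariants (Procesi) to turn $\bar q \equiv c$ into a statement about the abstract trace polynomial underlying $q$, combined with a careful degree analysis from Lemma \ref{l:deg}. I expect this combinatorial/trace-theoretic step to be the technical heart of the proof.
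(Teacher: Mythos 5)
Your plan correctly reproduces the paper's substitution: the block matrices $\phi(\gX_j^{(n)})$ you construct are exactly the tuple $\cZ$ used in the paper (with $J = (I_{n_0}\mid 0)$ recovering the top--right identity block), and the Schur-complement computation $\phi(f_n)=\det\bigl(L(\gX^{(n_0)})-A_{j'}A_{j''}\otimes JY\bigr)=\hat f_{n_0}$ is right. Your deformation step, conjugating by $\operatorname{diag}(t^{-1}I_k,I_{n_0})$ and appealing to $\GL_n$-invariance plus Lemma~\ref{l:triang}, is a valid (if roundabout) way to see that $\phi(q)$ constant forces $q\bigl(\begin{smallmatrix}0&0\\0&\gX^{(n_0)}\end{smallmatrix}\bigr)$ constant. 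But the proof is not complete, and the gap you flag at the end is not a technicality you can defer --- it is the core of the argument, and the device you suggest for filling it does not work.

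The missing ingredient is the determinant's multiplicativity on block-diagonal tuples: $f_{2m}(\gX'\oplus\gX'')=f_m(\gX')f_m(\gX'')$. The paper works with the even index $2m$ precisely so that this identity, together with the fact from Lemma~\ref{l:deg} that $\deg f_{2m}=2d_Lm$ exactly, pins the degrees: writing $\tilde p=p(\gX'\oplus\gX'')$, $\tilde q=q(\gX'\oplus\gX'')$, one has $\deg\tilde p+\deg\tilde q = 2d_Lm = \deg p+\deg q$, hence $\deg\tilde p=\deg p>0$ and $\deg\tilde q=\deg q>0$. Since $f_m(\gX')f_m(\gX'')$ is a product in disjoint variable sets, $\tilde p$ and $\tilde q$ themselves split as products $\tilde p_1(\gX')\tilde p_2(\gX'')$, $\tilde q_1(\gX')\tilde q_2(\gX'')$, and the invariance relation $\tilde p(\gX,0)=\tilde p(0,\gX)$ forces each of $\tilde p_1,\tilde p_2$ (and $\tilde q_1,\tilde q_2$) to be nonconstant. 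This establishes \emph{before} any reference to $\hat f_m$ that $p\bigl(\begin{smallmatrix}0&0\\0&\gX^{(m)}\end{smallmatrix}\bigr)$ and $q\bigl(\begin{smallmatrix}0&0\\0&\gX^{(m)}\end{smallmatrix}\bigr)$ are both nonconstant. The contradiction is then immediate at the substitution step: $p(\cZ),q(\cZ)$ restrict at $\gY=0$ to these nonconstant polynomials (Lemma~\ref{l:triang}), so both are nonconstant, yet their product is the irreducible $\hat f_m$. You run the implication the other way round --- deriving $q\bigl(\begin{smallmatrix}0&0\\0&\gX^{(n_0)}\end{smallmatrix}\bigr)=c$ from irreducibility of $\hat f_{n_0}$ --- and are then left needing to upgrade this to global constancy of $q$, which is exactly what cannot be done directly. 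The Procesi ``trace identity of low degree'' route you gesture at fails here because $\deg q$ may be as large as $d_Ln-1$, far above $n_0$; knowing that $q-c$ vanishes identically on $n_0\times n_0$ matrices says nothing when its degree exceeds $n_0$. (The paper does invoke \cite[Theorem~4.5]{Pro} once, but only in the much more favourable odd-$n$ passage where the relevant factor has degree $\leq d_L\leq n-1$.) Finally, note that your block split $(k,n_0)$ with $k\geq n_0$ must be abandoned in favour of equal blocks $(m,m)$ to get the product identity, and the odd $n\geq 2n_0$ then require the separate argument via $f_{n-1}$; your write-up makes no such case split.
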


\begin{proof}
For $n\ge n_0$ denote
$$f_n=\det L\left(\gX^{(n)}\right), \qquad
\hat{f}_n=\det\left(L\left(\gX^{(n)}\right)-A_{j'}A_{j''} \otimes \gY^{(n)}\right).$$
Suppose that $f_{2n}=pq$ for some nonconstant polynomials $p$ and $q$. By Lemma \ref{l:invfac}, $p$ and $q$ are $\gl{2n}$-invariant. Let $\gX'^{(n)}_j, \gX''^{(n)}_j$ be independent $n\times n$ generic matrices and denote
$$\tilde{p}=\tilde{p}(\gX'^{(n)}, \gX''^{(n)}):=p(\gX'^{(n)}\oplus \gX''^{(n)}),\qquad
\tilde{q}=\tilde{q}(\gX'^{(n)}, \gX''^{(n)}):=q(\gX'^{(n)}\oplus \gX''^{(n)}).$$
Note that
\begin{equation}\label{e:11}\tilde{p}(\gX^{(n)}, 0)=\tilde{p}(0, \gX^{(n)}),\qquad
\tilde{q}(\gX^{(n)}, 0)=\tilde{q}(0, \gX^{(n)})
\end{equation}
since $p$ and $q$ are $\gl{2n}$-invariant. Consider
\begin{equation}\label{e:12}
f_n(\gX'^{(n)})f_n(\gX''^{(n)})
=f_{2n}(\gX'^{(n)}\oplus \gX''^{(n)})
=\tilde{p}\tilde{q}.
\end{equation}
By Lemma \ref{l:deg}, the left-hand side of \eqref{e:12} has degree $2d_Ln$. Since $p$ and $q$ have degree strictly less than $2d_Ln$, we conclude that $\tilde{p}$ and $\tilde{q}$ are nonconstant. Moreover, since the left-hand side of \eqref{e:12} is a product of two polynomials in disjoint sets of variables, we conclude that
$$\tilde{p}=\tilde{p}_1(\gX'^{(n)})\tilde{p}_2(\gX''^{(n)}), \qquad
\tilde{q}=\tilde{q}_1(\gX'^{(n)})\tilde{q}_2(\gX''^{(n)})$$
for some polynomials $\tilde{p}_1,\tilde{p}_2,\tilde{q}_1,\tilde{q}_2$. If $\tilde{p}_1$ were constant, then $\tilde{p}_2$ would be constant by \eqref{e:11}, contradicting that $\tilde{p}$ is nonconstant. Hence we conclude that $\tilde{p}_1,\tilde{p}_2,\tilde{q}_1,\tilde{q}_2$ are nonconstant and consequently
$$p\begin{pmatrix}0 & 0 \\ 0 & \gX^{(n)}\end{pmatrix}= \tilde{p}(0,\gX^{(n)}), \qquad
q\begin{pmatrix}0 & 0 \\ 0 & \gX^{(n)}\end{pmatrix}= \tilde{q}(0,\gX^{(n)})$$
are nonconstant.

Since
$$\det\left( L\left(\gX^{(n)}\right)-A_{j'}A_{j''} \otimes \gY^{(n)}\right)
=\det\begin{pmatrix}
I & A_{j''}\otimes I \\ A_{j'}\otimes \gY^{(n)} & L\left(\gX^{(n)}\right)
\end{pmatrix},$$
we see that
$$\hat{f}_n=f_{2n}(\cZ),$$
where
$$\cZ_j=\begin{pmatrix}0 & 0 \\ 0 & \gX^{(n)}_j\end{pmatrix}$$
for $j\notin\{j',j''\}$, and
$$\cZ_{j'}=\begin{pmatrix}0 & 0 \\ \gY^{(n)} & \gX^{(n)}_{j'}\end{pmatrix}, \qquad
\cZ_{j''}=\begin{pmatrix}0 & I \\ 0 & \gX^{(n)}_{j''}\end{pmatrix}$$
if $j'\neq j''$ and
$$\cZ_{j'}=\begin{pmatrix}0 & I \\ \gY^{(n)} & \gX^{(n)}_{j'}\end{pmatrix}$$
if $j'=j''$. Since $\cZ|_{\gY^{(n)}=0}$ is a tuple of block upper triangular matrices, Lemma \ref{l:triang} implies
$$p(\cZ)|_{\gY^{(n)}=0} = p\begin{pmatrix}0 & 0 \\ 0 & \gX^{(n)}\end{pmatrix}, \qquad
q(\cZ)|_{\gY^{(n)}=0} = q\begin{pmatrix}0 & 0 \\ 0 & \gX^{(n)}\end{pmatrix}.$$
In particular, $p(\cZ)$ and $q(\cZ)$ are nonconstant and
$$\hat{f}_n=p(\cZ)q(\cZ),$$
a contradiction.

Hence we have proven the statement for every even $n\ge 2n_0$. If $n\ge 2n_0$ is odd, then $n-1\ge 2n_0$. If $f_n=pq$, then irreducibility of $f_{n-1}$ implies that $p(0\oplus\gX^{(n-1)})=1$ and $q(0\oplus\gX^{(n-1)})=f_{n-1}$ (or vice versa). Therefore $q$ is of degree at least $(n-1)d_L$, so $1-p$ is of degree at most $d_L$, is a pure trace identity for $\mat{n-1}$ but not for $\mat{n}$. However, this cannot happen by \cite[Theorem 4.5]{Pro} since $n-1\ge 2n_0\ge d_L$.
\end{proof}

We are now ready to prove the first of our main results, which was announced in \cite{Vol2}. After being informed of Theorem \ref{t:main}, Kriel \cite{Kri} independently proved it for hermitian pencils.

\begin{thm}\label{t:main}
If $L$ is an irreducible pencil, then $\det L\left(\gX^{(n)}\right)$ is an irreducible polynomial for large $n$.
\end{thm}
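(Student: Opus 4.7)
The plan is to enlarge $L$ via iterated applications of Lemma~\ref{l:main} to a pencil $L^+$ of size $d$ whose coefficients linearly span $\mat{d}$, verify irreducibility of $\det L^+(\gX^{(n)})$ directly by reducing it through a linear change of variables to $\det(I_{dn}-U)$ with $U$ a generic $dn\times dn$ matrix, and then rewind along the chain.

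Since $L=I-\sum_{j=1}^g A_j x_j$ is irreducible, $A_1,\dots,A_g$ generate $\mat{d}$ as a $\kk$-algebra, so finitely many monomials in $A_1,\dots,A_g$ span $\mat{d}$ as a $\kk$-vector space. Starting from $L_0:=L$, I inductively construct a chain of pencils $L_0,L_1,\dots,L_N=:L^+$, all of size $d$, where $L_{k+1}$ is obtained from $L_k$ by an application of Lemma~\ref{l:main}: it has one additional variable whose coefficient is a product of two existing coefficients of $L_k$. Iteratively multiplying coefficients realizes every desired monomial in $A_1,\dots,A_g$ as a coefficient of some $L_k$, so after finitely many steps the coefficient set of $L^+$ linearly spans $\mat{d}$.

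To show $\det L^+(\gX^{(n)})$ is irreducible for every $n$, write $L^+=I-\sum_j A_j^+ x_j^+$; after reordering, $A_1^+,\dots,A_{d^2}^+$ is a basis of $\mat{d}$ and $A_{d^2+k}^+=\sum_{i=1}^{d^2}\gamma_{ki}A_i^+$ for the remaining coefficients. The invertible linear change of variables
$$W_i^{(n)}:=\gX_i^{(n)}+\sum_k\gamma_{ki}\gX_{d^2+k}^{(n)}\qquad(i=1,\dots,d^2),$$
fixing the $\gX_{d^2+k}^{(n)}$, transforms $L^+(\gX^{(n)})$ into $I_{dn}-\sum_{i=1}^{d^2}A_i^+\otimes W_i^{(n)}$. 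A further invertible linear change on the entries of the $W_i^{(n)}$, enabled by $\{A_i^+\}_{i\le d^2}$ being a basis of $\mat{d}$, presents $\sum_{i=1}^{d^2}A_i^+\otimes W_i^{(n)}$ as a $dn\times dn$ matrix $U$ of algebraically independent entries. Classical irreducibility of $\det(I_{dn}-U)$ (a translate of the generic determinant), together with our composite coordinate change being an automorphism of the ambient polynomial ring, then yields irreducibility of $\det L^+(\gX^{(n)})$.

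Finally, I rewind along the chain: starting with $\det L_N(\gX^{(n)})=\det L^+(\gX^{(n)})$ irreducible for all $n\ge d^2-1$, Lemma~\ref{l:main} applied in reverse at each step at worst doubles the threshold, so after $N$ reverse steps $\det L(\gX^{(n)})$ is irreducible for all $n\ge 2^N(d^2-1)$, which is the desired conclusion. The main subtle point is that the pencil size must stay fixed at $d$ throughout the chain so that the threshold $d^2-1$ in Lemma~\ref{l:main} applies at every reverse step; this is automatic, as Lemma~\ref{l:main} only augments the number of variables, not the pencil size. The remaining bookkeeping — that the two coordinate changes in the direct irreducibility step are polynomial-ring automorphisms — follows from their triangular structure.
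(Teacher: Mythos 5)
Your proposal is correct and follows essentially the same strategy as the paper's proof: enlarge the pencil by adjoining products of coefficients until the coefficient set linearly spans $\mat{d}$, verify irreducibility of the enlarged pencil's determinant by an invertible (affine) change of variables identifying it with a generic determinant, and then peel the added variables back off via Lemma~\ref{l:main}, doubling the size threshold at each step. The paper organizes the enlargement by choosing words $w_{\imath\jmath}$ spanning $\mat{d}$ in advance rather than building the chain inductively, and it reduces to the linearly independent case before the change of variables rather than absorbing the dependence into that change, but these are presentational differences, not mathematical ones.
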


\begin{proof}
Let $L=I-\sum_jA_jx_j$; then $A_1,\dots,A_g\in\mat{d}$ generate $\mat{d}$ as a $\kk$-algebra by assumption. If
$$A_g=\sum_{j=1}^{g-1}\alpha_jA_j,$$
then $A_1,\dots,A_{g-1}$ generate $\mat{d}$ and
$$\det L\left(\gX^{(n)}\right)
=\det\left(I-\sum_{j=1}^{g-1} A_j\otimes \left(\gX^{(n)}_j+\alpha_j\gX^{(n)}_g\right)\right),$$
so irreducibility of $\det\left(I-\sum_{j<g} A_j\otimes \gX^{(n)}_j\right)$ implies irreducibility of $\det L\left(\gX^{(n)}\right)$. Therefore we can without loss of generality assume that $A_1,\dots,A_g$ are linearly independent.

Let $w_{\imath\jmath}\in\mx$ for $1\le \imath,\jmath\le d$ be such that
\begin{equation}\label{e:120}
\{x_1,\dots,x_g\}\subseteq\{w_{\imath\jmath}\colon 1\le \imath,\jmath\le d\}, \qquad
\spa\left\{w_{\imath\jmath}(A)\colon 1\le \imath,\jmath\le d\right\}=\mat{d}.
\end{equation}
For $1\le\imath,\jmath\le d$ let $E_{\imath\jmath}\in\mat{d}$ be the standard matrix units. If $\gX_{\imath\jmath}^{(n)}$ for $1\le \imath,\jmath\le d$ are $n\times n$ generic matrices, then
$$\sum_{\imath,\jmath} E_{\imath\jmath}\otimes\gX_{\imath\jmath}^{(n)}$$
is a $(dn)\times (dn)$ generic matrix, so its determinant is irreducible for every $n\in\N$ \cite[Lemma B.2.10]{GW}. The polynomial
\begin{equation}\label{e:13}
\det\left(I-\sum_{\imath,\jmath} w_{\imath\jmath}(A)\otimes\gX_{\imath\jmath}^{(n)} \right)
\end{equation}
is a composition of the determinant of a generic matrix and an affine map on variables (this map is invertible by \eqref{e:120}), and therefore irreducible. Starting with polynomial \eqref{e:13} we $N=\sum_{\imath,\jmath}|w_{\imath\jmath}|-g$ times recursively apply Lemma \ref{l:main} to get rid of $w_{\imath\jmath}(A)$ for $|w_{\imath\jmath}|>1$ and conclude that $\det L\left(\gX^{(n)}\right)$ is an irreducible polynomial for every $n\ge (d^2-1) 2^N$.
\end{proof}

\begin{rem}
From the proof of Theorem \ref{t:main} one can derive a deterministic bound on $n$ for checking the irreducibility of $\det L\left(\gX^{(n)}\right)$ that is exponential in the size of $L$.
\end{rem}

\subsection{Irreducible free loci}\label{ss33}

Recall that a free locus is irreducible if it is not a union of smaller free loci. If $\fl$ is a free locus, than $\fl_n$ is either an empty set or a hypersurface for every $n\in\N$.

\begin{cor}\label{c:loc}
If $\fl$ is an irreducible free locus, then $\fl_n$ is an irreducible hypersurface for large $n$.
\end{cor}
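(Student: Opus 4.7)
The plan is to combine the characterization of irreducible free loci recalled in the preliminaries with Theorem \ref{t:main}, the main irreducibility result for determinants of irreducible pencils.

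First I would invoke item (3) of the list of facts about free loci in Subsection \ref{s2}: since $\fl$ is an irreducible free locus, there exists an irreducible monic pencil $L$ such that $\fl = \fl(L)$. Then by Theorem \ref{t:main} applied to this $L$, there exists $n_0 \in \N$ such that the commutative polynomial $\det L(\gX^{(n)}) \in \kk[\ulxi]$ is irreducible for all $n \ge n_0$.

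Second, I would pass from irreducibility of the defining polynomial to irreducibility of the hypersurface. By definition,
\[
\fl_n(L) = \{X \in \mat{n}^g : \det L(X) = 0\} = V\bigl(\det L(\gX^{(n)})\bigr),
\]
i.e., $\fl_n(L)$ is the vanishing locus in $\mat{n}^g \cong \kk^{gn^2}$ of $\det L(\gX^{(n)})$. Since $\kk$ is algebraically closed, a standard fact in algebraic geometry says that the zero set of a single irreducible polynomial is an irreducible hypersurface (the ideal it generates is prime, hence its radical is prime). Therefore $\fl_n = \fl_n(L)$ is an irreducible hypersurface for all $n \ge n_0$.

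There is no real obstacle here; all substantive work has already been done in Theorem \ref{t:main} and in the facts collected in Subsection \ref{s2}. The only point meriting a brief mention is the transition from the irreducible polynomial to the irreducible variety, which is immediate over an algebraically closed field of characteristic zero.
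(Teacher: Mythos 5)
Your proof is correct and follows exactly the same route as the paper: identify $\fl$ with $\fl(L)$ for an irreducible pencil $L$ via the characterization recalled in the preliminaries, then apply Theorem \ref{t:main}. The paper's own proof is just the one-line version of what you wrote; the observation that an irreducible polynomial over an algebraically closed field cuts out an irreducible hypersurface is left implicit there.
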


\begin{proof}
Every irreducible free locus is a free locus of an irreducible pencil, so Theorem \ref{t:main} applies.
\end{proof}

\begin{exa}\label{ex1}
Let $A_1,A_2\in\mat{3}$ be as in Example \ref{ex0}. Note that
$$\det(I-\gx_1A_1-\gx_2A_2)=(1-\gx_1+\gx_2)(1-\gx_1-\gx_2).$$
Hence $L=I-A_1x_1-A_2x_2$ is an irreducible pencil and $\fl_1(L)$ is a union of two lines. On the other hand one can check that $\fl_2(L)$ is irreducible.
\end{exa}

Together with the block form \eqref{e:05} of a monic pencil $L$, Corollary \ref{c:loc} shows that components of $\fl_n(L)$ for large $n$ arise from the global decomposition of $\fl(L)$.

\begin{cor}\label{c:comp}
Let $L$ be a monic pencil and let $L_1,\dots, L_k$ be pairwise non-similar irreducible pencils appearing in \eqref{e:05}. For large $n$,
$$\fl_n(L)=\fl_n(L_1)\cup\cdots\cup \fl_n(L_k)$$
is the decomposition 
of $\fl_n(L)$
into distinct irreducible hypersurfaces.
\end{cor}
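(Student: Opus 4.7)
The plan is to combine the block decomposition (\ref{e:05}) with Corollary \ref{c:loc} for each irreducible diagonal block, and then handle distinctness of the resulting components via a direct sum argument.

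First I would observe that since conjugation does not change free loci and determinants, upper triangularity of the form (\ref{e:05}) gives
\[
\det L\!\left(\gX^{(n)}\right)=\prod_{k=1}^{\ell}\det L_k\!\left(\gX^{(n)}\right),
\]
with the factor equal to $1$ for each $L_k=I$. Hence $\fl_n(L)=\bigcup_{k}\fl_n(L_k)$, the union being taken over the non-identity blocks; after discarding identity blocks we are left exactly with the pairwise non-similar irreducible pencils $L_1,\dots,L_k$ of the statement. Applying Corollary \ref{c:loc} (or Theorem \ref{t:main}) to each $L_i$ separately and taking $n_0$ to be the maximum of the resulting thresholds, each $\fl_n(L_i)$ is an irreducible hypersurface for $n\ge n_0$.

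The main step, and the only real obstacle, is pairwise distinctness: for large $n$ one must rule out $\fl_n(L_i)=\fl_n(L_j)$ when $i\ne j$. Since $L_i$ and $L_j$ are irreducible and non-similar, fact (2) of Subsection 2.1 implies $\fl(L_i)\not\subseteq\fl(L_j)$, so there exist $m\in\N$ and $X\in\fl_m(L_i)\setminus\fl_m(L_j)$. For any $Y\in\mat{n-m}^g$, evaluation at the block-diagonal tuple $X\oplus Y$ gives
\[
\det L_i(X\oplus Y)=\det L_i(X)\,\det L_i(Y)=0,\qquad
\det L_j(X\oplus Y)=\det L_j(X)\,\det L_j(Y),
\]
and the second factor on the right is nonzero as long as $Y\notin \fl_{n-m}(L_j)$. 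Such a $Y$ exists whenever $\fl_{n-m}(L_j)\subsetneq \mat{n-m}^g$, which holds for every $n-m\ge 1$ since $\det L_j(\gX^{(n-m)})$ evaluates to $1$ at the origin. Thus $X\oplus Y\in\fl_n(L_i)\setminus\fl_n(L_j)$, so $\fl_n(L_i)\not\subseteq\fl_n(L_j)$ for all $n\ge m+1$; since both are irreducible hypersurfaces (for $n\ge n_0$), this yields $\fl_n(L_i)\ne \fl_n(L_j)$.

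There are only finitely many ordered pairs $(i,j)$ with $i\ne j$, so by taking the maximum of the finitely many thresholds produced above I obtain a single $n_0'\ge n_0$ past which all $\fl_n(L_i)$ are irreducible hypersurfaces and are pairwise distinct. This gives the asserted irredundant irreducible decomposition of $\fl_n(L)$ for $n\ge n_0'$. The only subtle point is the distinctness step, but direct sums reduce it cleanly to the global inequality $\fl(L_i)\ne \fl(L_j)$ furnished by fact (2).
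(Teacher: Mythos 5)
Your proof is correct and mirrors the paper's approach: the block form \eqref{e:05} gives $\fl_n(L)=\bigcup_k\fl_n(L_k)$, Corollary~\ref{c:loc} gives eventual irreducibility of each component, and the remaining distinctness claim is precisely what the paper leaves implicit (cf.\ the discussion just before the corollary and in Remark~\ref{r:rad}). Your direct-sum argument propagating the global non-containment $\fl(L_i)\not\subseteq\fl(L_j)$ (from Section~2.1, fact~(2)) to a witness in every level $n\ge m+1$ is the natural way to fill that step, and it is sound.
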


\begin{rem}\label{r:rad}
Let $L$ be an FL-minimal pencil. Let $L_1,\dots,L_\ell$ be pairwise non-similar irreducible pencils appearing in the decomposition of $L$ as in \eqref{e:05}. Since $\fl(L_i)\neq\fl(L_{i'})$ for $i\neq i'$, Theorem \ref{t:main} implies that $\det L_i(\gX^{(n)})$ are distinct irreducible polynomials for large $n$. Therefore
$$\det L(\gX^{(n)})=\prod_{i=1}^\ell \det L_i(\gX^{(n)})$$
is square-free for large $n$. Hence $\det L(\gX^{(n)})$ is a minimum degree defining polynomial for $\fl_n(L)$ and it generates the vanishing (radical) ideal of $\fl_n(L)$. 
\end{rem}

\section{Irreducibility of matrices over a free algebra}\label{s4}

In this section we extend Theorem \ref{t:main} to matrices over a free algebra. Let $\px$ be the free $\kk$-algebra generated by $\ulx=(x_1,\dots,x_g)$. Its elements are {\bf noncommutative polynomials}. For $f\in\opm_d(\px)$ let
$$\fl(f)=\bigcup_{n\in\N}\fl_n(f)\subseteq \cM^g, \qquad \text{where}\quad
\fl_n(f)=\left\{X\in\mat{n}^g\colon \det f(X)=0\right\},$$
be the {\bf free locus} of $f$. In Theorem \ref{t:irrpoly} below we prove that if $f$ does not factor in $\opm_d(\px)$, then $\fl_n(f)$ is an irreducible hypersurface for large $n$.

A fundamental finding of Cohn is that $\px$ is a free ideal ring, abbreviated fir, and that even rings with the weaker ``semifir'' property exhibit excellent behavior when it comes to factorizations. We list few definitions and facts about factorization of matrices over semifirs extracted from \cite[Chapter 3]{Coh}.

\begin{enumerate}
\item If $f\in \GL_d(\px)$, then $\det f(0)\neq0$; moreover, $\det f(\gX^{n})$ is a polynomial without zeros and hence constant, so $\det f(X)=\det f(0)^n$ for every $X\in\mat{n}^g$. In particular, irreducible monic pencils are non-invertible by \cite[Corollary 3.4]{KV}.

\item A matrix $f\in \opm_d(\px)$ is {\bf regular} if $f$ is not a zero divisor in $\opm_d(\px)$. In particular, if $f(0)=I$, then $f$ is regular. A regular non-invertible matrix is an {\bf atom} \cite[Section 3.2]{Coh} if it is not a product of two non-invertible matrices in $\opm_d(\px)$. In the special case $d=1$, a nonconstant noncommutative polynomial is an atom if it is not a product of two nonconstant noncommutative polynomials. We use this terminology to avoid  confusion with the notion of irreducibility for monic pencils.

\item We say that $f_1\in\opm_{d_1}(\px)$ and $f_2\in\opm_{d_2}(\px)$ are {\bf stably associated} if there exist $e_1,e_2\in\N$ with $d_1+e_1=d_2+e_2$ and $P,Q\in\GL_{d_1+e_1}(\px)$ such that
$$f_1\oplus I_{e_1} = P(f_2\oplus I_{e_2})Q.$$
Stable associativity is clearly an equivalence relation for regular (square) matrices over $\px$. By \cite[Corollary 0.5.5]{Coh}, $f_1$ and $f_2$ are stably associated if and only if
$${\px}^{d_1}/f_1 \cdot {\px}^{d_1}\cong {\px}^{d_2}/f_2 \cdot{\px}^{d_2}$$
as right $\px$-modules.

\item Let $f\in \opm_d(\px)$ be regular. By the definition of torsion modules \cite[Section 3.2]{Coh} and their relations to factorization \cite[Propositions 0.5.2 and 3.2.1]{Coh}, it follows that $f$ is an atom if and only if $\px^d/f \cdot\px^d$ has no non-trivial torsion submodules. In particular, if regular matrices $f_1$ and $f_2$ are stably associated, then $f_1$ is an atom if and only if $f_2$ is an atom.
\end{enumerate}

\begin{rem}\label{r:hom}
If $h_1,h_2\in\px$ are homogeneous and stably associated, then there exists $\lambda\in\kk^*=\kk\setminus\{0\}$ such that $h_2=\lambda h_1$. We now prove this. Let $d\in\N$ be such that
\begin{equation}\label{e:h1}
P(h_1\oplus I)=(h_2\oplus I)Q
\end{equation}
for some $P,Q\in\GL_d(\px)$. Then it is easy to see that $h_1$ and $h_2$ are of the same degree $\delta$. Let $P_i$ and $Q_i$ be homogeneous parts of $P$ and $Q$, respectively, of degree $i$. By looking at the constant part of \eqref{e:h1} we obtain $P_0(0\oplus I)=(0\oplus I)Q_0$, so the first row of $P_0$ equals $(\alpha\, 0\dots\, 0)$ for some $\alpha\in\kk$ and the first column of $Q_0$ equals $(\beta\, 0\dots\, 0)^\ti$ for some $\beta\in\kk$. Moreover, $\alpha,\beta\in\kk^*$ since $P_0,Q_0\in\GL_d(\kk)$. Next, the homogeneous part of \eqref{e:h1} of degree $\delta$ equals
$$P_0(h_1\oplus 0)+P_\delta (0\oplus I)=(h_2\oplus 0)Q_0+(0\oplus I) Q_\delta.$$
Note that the first column of $P_\delta (0\oplus I)$ and the first row of $(0\oplus I) Q_\delta$ are zero. Since the $(1,1)$-entries of $P_0(h_1\oplus 0)$ and $(h_2\oplus 0)Q_0$ are $\alpha h_1$ and $\beta h_2$, respectively, we can take $\lambda=\alpha\beta^{-1}$.
\end{rem}

\begin{lem}\label{l:sa}
Let $f\in\opm_d(\px)$ and $f(0)=I$. If $f$ is an atom, then $f$ is stably associated to an irreducible monic pencil $L$.
\end{lem}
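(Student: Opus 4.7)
The plan is to first linearize $f$ to a monic pencil, then exploit the block triangular decomposition~\eqref{e:05} together with atomicity to cut down to a single irreducible block. I begin by invoking Cohn's linearization procedure~\cite[Section~5.8]{Coh}: since $f(0)=I$, there exist $D\ge d$, a monic pencil $\tilde L\in\opm_D(\px)$, and $P,Q\in\GL_D(\px)$ realising $(f\oplus I_{D-d})=P\tilde L Q$, so $f$ and $\tilde L$ are stably associated. By fact~(4) of Section~\ref{s2}, there is $S\in\GL_D(\kk)\subset\GL_D(\px)$ such that $L:=S^{-1}\tilde L S$ has block upper triangular form~\eqref{e:05} with diagonal blocks $L_1,\dots,L_\ell$, each equal either to $I$ or to an irreducible monic pencil. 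Stable associativity is transitive and preserved by $\GL_D(\px)$-conjugation, so $L$ is stably associated to $f$; by fact~(4) at the start of the present section, $L$ is then an atom in $\opm_D(\px)$.

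The central step is to show that \emph{exactly one} of the $L_k$ is irreducible. If every $L_k=I$, then $L$ is unit upper triangular and hence lies in $\GL_D(\px)$, contradicting the non-invertibility of atoms. Suppose instead that two diagonal blocks, at positions $i<j$, are both irreducible, and partition $L$ as a $2\times 2$ block matrix
\[
L=\begin{pmatrix} A & B \\ 0 & C\end{pmatrix}
\]
with the cut between positions $i$ and $i+1$, so that $A$ is block upper triangular with $L_i$ among its diagonal blocks and $C$ is block upper triangular with $L_j$ among its diagonal blocks. By fact~(1) of Section~\ref{s2}, an irreducible monic pencil has nonempty free locus, so picking $X$ with $L_i(X)$ singular makes $A(X)$ singular (and similarly for $C$); by fact~(1) at the start of the present section this forces $A\notin\GL_\bullet(\px)$ and $C\notin\GL_\bullet(\px)$. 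The identity
\[
L=\begin{pmatrix} I & B \\ 0 & C\end{pmatrix}\begin{pmatrix} A & 0 \\ 0 & I\end{pmatrix}
\]
then exhibits $L$ as a product of two non-invertible matrices in $\opm_D(\px)$, contradicting atomicity of $L$.

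Thus exactly one of the blocks, say $L_{k_0}$, is irreducible; conjugating $L$ by a block permutation (still in $\GL_D(\kk)$, hence preserving stable associativity) we may take $L=\begin{pmatrix} L_{k_0} & C \\ 0 & I\end{pmatrix}$, so
\[
L=\begin{pmatrix} I & C \\ 0 & I\end{pmatrix}\bigl(L_{k_0}\oplus I_{D-d_{k_0}}\bigr).
\]
The first factor lies in $\GL_D(\px)$, so $L$ is stably associated to the irreducible monic pencil $L_{k_0}$; transitivity of stable associativity then yields the same for $f$. The main obstacle in this plan is producing the bona fide factorization of $L$ in the middle step: one needs to convert a purely upper triangular block decomposition into a genuine product of two non-invertible matrices in $\opm_D(\px)$, which is where the block-matrix identity above is combined with the evaluation-based non-invertibility criterion from fact~(1) at the start of Section~\ref{s4}. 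Once that obstacle is cleared, the remainder is bookkeeping with stable associativity.
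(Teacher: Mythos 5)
Your overall plan tracks the paper's closely: linearize via Higman's trick to a stably associated monic pencil, pass to the block upper triangular form~\eqref{e:05}, and use atomicity to cut down to a single irreducible diagonal block. The middle step --- showing that two irreducible diagonal blocks $L_i,L_j$ would force a factorization of $L$ into non-invertibles via the $2\times 2$ partition $L=\left(\begin{smallmatrix} A & B \\ 0 & C\end{smallmatrix}\right)=\left(\begin{smallmatrix} I & B \\ 0 & C\end{smallmatrix}\right)\left(\begin{smallmatrix} A & 0 \\ 0 & I\end{smallmatrix}\right)$ --- is correct and in fact slightly cleaner than the paper's version, which instead first reduces to the largest irreducible index $\ell_0$ and only then rules out smaller irreducible blocks.

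However, the final step has a genuine gap. Having established that $L_{k_0}$ is the only irreducible diagonal block, you claim to conjugate $L$ by a block permutation to reach $L=\left(\begin{smallmatrix} L_{k_0} & C \\ 0 & I\end{smallmatrix}\right)$. This does \emph{not} work: conjugating a block upper triangular matrix by a permutation generally destroys block upper triangularity. Concretely, with $\ell=3$, $L_1=L_3=I$, $L_2=L_{k_0}$, and off-diagonal blocks $B_{12},B_{13},B_{23}$, swapping positions $1$ and $2$ produces $\left(\begin{smallmatrix} L_{k_0} & 0 & B_{23} \\ B_{12} & I & B_{13} \\ 0 & 0 & I\end{smallmatrix}\right)$, which has a nonzero $(2,1)$ block whenever $B_{12}\neq 0$. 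The fix is not a permutation but block elementary operations: since all diagonal blocks other than $L_{k_0}$ are $I$, write $L$ in $3\times 3$ block form with $L_{k_0}$ in the middle and multiply on the left and right by unitriangular (hence invertible) block matrices to clear the off-diagonal blocks coupled to $L_{k_0}$, reaching $I\oplus L_{k_0}\oplus I$ times an invertible factor. Equivalently, follow the paper: choose $\ell_0$ to be the \emph{largest} index with $L_{\ell_0}$ irreducible, left-multiply to eliminate the invertible bottom-right part and arrive at a pencil $L'$ with $L_{\ell_0}$ in the lower-right corner, and then factor $L'$ as (block diagonal)$\times$(unitriangular). Once this repair is made, your argument is correct and follows essentially the paper's route.
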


\begin{proof}
By linearization, also known as Higman's trick \cite[Section 8.5]{Coh}, we have
\begin{equation}\label{e:higman}
\begin{pmatrix}1 & a_1\\ 0 & 1\end{pmatrix}
\begin{pmatrix}a_0+a_1a_2 & 0 \\ 0 & 1\end{pmatrix}
\begin{pmatrix}1 & 0\\ -a_2 & 1\end{pmatrix}
=\begin{pmatrix}a_0 & a_1 \\ -a_2 & 1\end{pmatrix}
\end{equation}
for all square matrices $a_0,a_1,a_2$ of equal sizes. Using \eqref{e:higman} we can step-by-step ``decouple'' products appearing in $f$ to obtain
\begin{equation}\label{e:42}
P\left(f\oplus I\right)Q = L
\end{equation}
for some monic linear pencil $L$ of size $d'$ and $P,Q\in\GL_{d'}(\px)$. We remark that $P$ (resp. $Q$) is upper (resp. lower) unitriangular. Hence $f$ and $L$ are stably associated, so $L$ is an atom in $\opm_{d'}(\px)$. As in \eqref{e:05}, there exists $U\in\GL_{d'}(\kk)$ such that
\begin{equation}\label{e:43}
ULU^{-1}=
\begin{pmatrix}
L_1 & \star& \cdots & \star\\
& \ddots & \ddots & \vdots\\
& & \ddots & \star\\
& & & L_{\ell}
\end{pmatrix},
\end{equation}
where each $L_k$ is either $I$ or an irreducible monic pencil. Since $f$ is not invertible, at least one of $L_k$ is irreducible (i.e., $L_k\neq I$); let $\ell_0$ be the largest such $k$. By multiplying $ULU^{-1}$ on the left-hand side with an appropriate invertible matrix (note that the block, which is below and to the right of $L_{\ell_0}$, is invertible) we see that $L$ (and thus $f$) is stably associated to
$$L'=
\begin{pmatrix}
L_1 & \cdots & \star\\
& \ddots & \vdots\\
& & L_{\ell_0}
\end{pmatrix}.
$$
If $L_k\neq I$ for some $k<\ell_0$, then
$$L'=
\begin{pmatrix}
I & & \\
& \ddots & \\
& & L_{\ell_0}
\end{pmatrix}
\begin{pmatrix}
L_1 & \cdots & \star\\
& \ddots & \vdots\\
& & I
\end{pmatrix}
$$
would be a product of non-invertible matrices, contradicting that $L'$ is an atom. Therefore
$$L'=
\begin{pmatrix}
I & \cdots & \star\\
& \ddots & \vdots\\
& & L_{\ell_0}
\end{pmatrix}
=
\begin{pmatrix}
I & & \\
& \ddots & \\
& & L_{\ell_0}
\end{pmatrix}
\begin{pmatrix}
I & \cdots & \star\\
& \ddots & \vdots\\
& & I
\end{pmatrix},
$$
so $L'$ (and thus $f$) is stably associated to $L_{\ell_0}$.
\end{proof}

Let $f\in\opm_d(\px)$ be regular. Then $f$ admits a factorization $f=f_1\cdots f_\ell$ into atoms $f_i\in\opm_d(\px)$ that are unique up to stable associativity by \cite[Proposition 3.2.9]{Coh}, and thus
$$\fl(f)=\fl(f_1)\cup\cdots\cup\fl(f_\ell).$$
If we change the order of atomic factors in a factorization of $f$, we possibly obtain a different noncommutative polynomial with the same free locus as $f$. Also, if an atomic factor is replaced by a (power of a) stably associated element in $\opm_d(\px)$, the polynomial can change but its free locus still equals $\fl(f)$.

We are now ready for our main result. Recall that $\gX^{(n)}$ is a $g$-tuple of $n\times n$ generic matrices.

\begin{thm}[Polynomial Singularit\"atstellensatz]\label{t:irrpoly}
For $i\in\{1,2\}$ let $f_i\in \opm_{d_i}(\px)$ satisfy $f_i(0)=I$.
\begin{enumerate}
	\item If $f_1$ is an atom, then $\det f_1(\gX^{(n)})$ is an irreducible polynomial for large $n$.
	\item If $f_1$ and $f_2$ are atoms and $\fl(f_1)=\fl(f_2)$, then $f_1$ and $f_2$ are stably associated.
	\item $\fl(f_1)\subseteq\fl(f_2)$ if and only if 
each  atomic factor of $f_1$ is up to stable associativity an atomic factor of $f_2$.
\end{enumerate}
\end{thm}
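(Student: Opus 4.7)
The plan is to lift Theorem~\ref{t:main} from irreducible pencils to matrices over the free algebra via the linearization supplied by Lemma~\ref{l:sa}, and then to convert factorizations in $\opm_d(\px)$ into factorizations of the commutative generic determinants.

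For part~(1), Lemma~\ref{l:sa} produces an irreducible monic pencil $L$ and matrices $P,Q\in\GL_{d'}(\px)$ with $P(f_1\oplus I)Q=L$. Since $P$ and $Q$ are invertible over $\px$, the formula $\det P(X)=\det P(0)^n$ for $X\in\mat{n}^g$ (noted at the start of Section~\ref{s4}) forces $\det P(\gX^{(n)})$ and $\det Q(\gX^{(n)})$ to be nonzero scalars. Evaluating $P(f_1\oplus I)Q=L$ at $\gX^{(n)}$ and taking ordinary determinants thus yields $\det L(\gX^{(n)})=c_n\,\det f_1(\gX^{(n)})$ with $c_n\in\kk^\ast$, and Theorem~\ref{t:main} transfers irreducibility. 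Part~(2) is similar: Lemma~\ref{l:sa} yields irreducible pencils $L_1,L_2$ stably associated to $f_1,f_2$, the same constant-factor identity gives $\fl_n(L_i)=\fl_n(f_i)$ for every $n$, so $\fl(L_1)=\fl(L_2)$; fact~(2) of Section~\ref{s2} then forces $L_1$ and $L_2$ to be similar, hence stably associated, and transitivity closes the argument.

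For part~(3) the ``if'' direction is immediate, since free loci distribute over matrix products as unions and are preserved under stable associativity. For ``only if'', we first factor $f_1=g_1\cdots g_m$ and $f_2=h_1\cdots h_k$ into atoms using Cohn's factorization. Because $f_i(0)=I$, each atomic factor has invertible constant term and is therefore associated (via a constant unit) to an atom sending $0$ to $I$; Lemma~\ref{l:sa} and part~(1) then ensure that $\det g_i(\gX^{(n)})$ and each $\det h_j(\gX^{(n)})$ are irreducible for large $n$. The chain $\fl_n(g_i)\subseteq\fl_n(f_1)\subseteq\fl_n(f_2)=\bigcup_{j=1}^k\fl_n(h_j)$ then exhibits an irreducible hypersurface contained in a finite union of irreducible hypersurfaces in $\mat{n}^g$, forcing $\fl_n(g_i)=\fl_n(h_{j(n)})$ for some index $j(n)$; pigeonhole over $\{1,\dots,k\}$ extracts a single index $j^\ast$ with $\fl_n(g_i)=\fl_n(h_{j^\ast})$ for $n$ in an infinite subset of $\N$.

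The step I expect to be the main technical obstacle is upgrading these level-wise equalities, which hold only along a subsequence, to equality of free loci at every $n$, which is what part~(2) requires as input. The needed observation is the block-diagonal identity $f(X\oplus 0)=f(X)\oplus f(0)$; since $f(0)$ is invertible (so $\det f(0)\ne 0$), this yields $X\in\fl_m(f)\iff X\oplus 0\in\fl_{m+r}(f)$ for any $r\ge 0$. Consequently, any inclusion $\fl_N(f')\subseteq\fl_N(f'')$ at a single level $N$ descends to $\fl_m(f')\subseteq\fl_m(f'')$ for every $m\le N$. Running this downward along the infinite subsequence of $n$ produced by pigeonhole, in both directions, gives $\fl_m(g_i)=\fl_m(h_{j^\ast})$ for every $m\in\N$, i.e.\ $\fl(g_i)=\fl(h_{j^\ast})$. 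Part~(2) then identifies $g_i$ with $h_{j^\ast}$ up to stable associativity, completing the proof.
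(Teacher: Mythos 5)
Your argument is correct and follows the same route as the paper: parts (1) and (2) are handled exactly as in the text via Lemma~\ref{l:sa} and Theorem~\ref{t:main}, and part (3) reduces to (2) through Cohn's atomic factorization. The paper dispatches (3) with a one-line remark, whereas you spell out the argument and, in particular, correctly identify and repair the one genuine subtlety it hides: the matching of an atomic factor $g_i$ of $f_1$ with a factor $h_{j(n)}$ of $f_2$ comes out of the irreducible-hypersurface decomposition at each large level $n$ separately, and a priori $j(n)$ could drift. Your padding observation $f(X\oplus 0)=f(X)\oplus f(0)$, valid because all the relevant constant terms are invertible, gives the downward monotonicity $\fl_N(f')\subseteq\fl_N(f'')\Rightarrow\fl_m(f')\subseteq\fl_m(f'')$ for $m\le N$, which together with pigeonhole promotes the subsequence equality to $\fl(g_i)=\fl(h_{j^\ast})$ at every level; this is exactly what is needed to feed (2). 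So the proposal is a correct and fully detailed rendering of the paper's intended proof rather than an alternative one.
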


\begin{proof}
(1) This is a direct consequence of Lemma \ref{l:sa} and Theorem \ref{t:main}.

(2) By Lemma \ref{l:sa}, $f_i$ is stably associated to an irreducible pencil $L_i$ for $i=1,2$. Since $\fl(L_1)=\fl(L_2)$, $L_1$ and $L_2$ are similar, so $f_1$ and $f_2$ are stably associated.

(3) Follows by (2) and a factorization of $f_i$ into atoms.
\end{proof}

\begin{rem}\label{r:shift}
The conclusions of Theorem \ref{t:irrpoly} hold more generally for $f\in\opm_d(\px)$ satisfying $f(\kk^g)\cap \GL_{d}(\kk)\neq\emptyset$, which readily follows from translating $\ulx$ by a scalar point and multiplying $f$ with an invertible matrix.
\end{rem}

For scalar noncommutative polynomials and linear matrix pencils we also give an effective converse to Theorem \ref{t:irrpoly}(1).

\begin{prop}\label{p:conv}
For $\delta>1$ and $d>1$ set
$$n_1=\left\lceil \frac{\delta}{2}\right\rceil,\qquad
n_2=
\left\{\begin{array}{ll}
1 & d=2, \\
\left\lceil (d-1)\sqrt{\frac{2(d-1)^2}{d-2}+\frac{1}{4}}+\frac{d-1}{2}-2 \right\rceil& d\ge3.
\end{array}\right.$$
\begin{enumerate}
	\item Let $f\in\px$ be of degree $\delta$ and $f(0)=1$. If $\det f(\gX^{(n)})$ is irreducible for some $n\ge n_1$, then $f$ is an atom.
	\item Let $L$ be a monic pencil of size $d$. If $\det L(\gX^{(n)})$ is irreducible for some $n\ge n_2$, then $L$ is an atom.
\end{enumerate}
\end{prop}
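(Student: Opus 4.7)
The strategy for both parts is the contrapositive: I would assume the polynomial $f$ (respectively pencil $L$) factors nontrivially, and deduce that $\det f(\gX^{(n)})$ (respectively $\det L(\gX^{(n)})$) is reducible for $n$ above the stated bound. For part (1), write $f=f_1f_2$ with $\deg f_i=\delta_i\ge 1$ and (after rescaling) $f_i(0)=1$. Since
\[
\det f(\gX^{(n)})=\det f_1(\gX^{(n)})\cdot\det f_2(\gX^{(n)}),
\]
it suffices to prove the following key claim: if $q\in\px$ is nonconstant with $q(0)=1$ and $\deg q=e$, then $\det q(\gX^{(n)})$ is nonconstant in $\kk[\ulxi]$ whenever $2n>e$. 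Since $\delta_i\le\delta-1$ and $2n_1=2\lceil\delta/2\rceil\ge\delta$, applying the key claim to each $f_i$ yields nonconstancy of both factors for $n\ge n_1$, whence $\det f(\gX^{(n)})$ is reducible.

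To prove the key claim, I argue by contradiction. Assuming $\det q(\gX^{(n)})\equiv 1$, the plan is to show $h:=q-1\ne 0$ must be a polynomial identity of $\mat{n}$; then Amitsur--Levitzki forces $e=\deg h\ge 2n$, contradicting $2n>e$. The homogeneous case $h=h_e$ is the cornerstone: the scaling $X\mapsto tX$ turns the identity into $\det(I+t^e h(X))\equiv 1$, and the expansion $\sum_{k=0}^n t^{ek}e_k(h(X))$ in powers of $t$ forces every elementary symmetric function $e_k(h(X))$ to vanish identically on $\mat{n}^g$. Hence $h(X)$ has only zero eigenvalues for every $X$, so $h^n$ is a polynomial identity of $\mat{n}$; the prime-domain property of the generic matrix ring $\GM_n(g)$ (a classical result of Amitsur and Procesi) then promotes this to $h$ itself being a PI. The general non-homogeneous case reduces to the homogeneous one by induction on $e$: matching the coefficient of $t^{en}$ in $\det q(tX)$ first yields $\det h_e\equiv 0$ on $\mat{n}^g$, and a finer analysis of the lower $t$-coefficients strips off $h_e$ as a PI before iterating.

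For part (2), the argument mirrors part (1). Writing $L=M_1M_2$ with $M_i\in\opm_d(\px)$ non-invertible and, after rescaling, $M_i(0)=I$, it again suffices to show that both $\det M_i(\gX^{(n)})$ are nonconstant. The matrix analog of the key claim applies the same nilpotency/Amitsur--Levitzki technique to $H:=M-I\in\opm_d(\px)$, now viewed inside the prime PI ring $\opm_d(\GM_n(g))$ of PI-degree $dn$, giving a lower bound linking the entry-degrees of $H$ to $2n$. The additional input is Cohn's factorization theory for the free ideal ring $\opm_d(\px)$: since $L$ is a monic linear pencil of size $d$, the entry-degrees of any factor matrices $M_1,M_2$ are controlled in terms of $d$. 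Combining the two bounds yields a quadratic inequality in $n$ whose positive solution is the explicit $n_2$ stated.

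The main obstacle, in both parts, is the interplay between the homogeneous and non-homogeneous aspects. In part (1), the scaling argument yields clean eigenvalue/nilpotency conclusions only for purely homogeneous $h$, so the inductive degree-stripping in the general case is where the subtle bookkeeping lies. In part (2), the further hurdle is rigorously extracting the entry-degree bound on $M_1,M_2$ from the linear-pencil structure of $L$, which is ultimately what produces the specific quantitative form of $n_2$.
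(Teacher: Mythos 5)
Your overall strategy (contrapositive; reduce to showing each nontrivial factor has nonconstant determinant at the given generic-matrix size; then invoke a PI-theoretic degree bound) is the right one, and matches the paper. The homogeneous subcase of your key claim is also correct: from $\det(I+t^e h(X))\equiv 1$ you do get that all elementary symmetric functions of $h(X)$ vanish, so $h$ is nilpotent-valued, $h^n$ is a polynomial identity, and primality of $\GM_n(g)$ upgrades this to $h$ being a PI.

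The genuine gap is the non-homogeneous case of part (1). You match the top $t$-coefficient of $\det\bigl(I+\sum_k t^k h_k(\gX^{(n)})\bigr)$ to conclude $\det h_e(\gX^{(n)})\equiv 0$, and then assert that a ``finer analysis of the lower $t$-coefficients strips off $h_e$ as a PI''; but $\det h_e\equiv 0$ only says $h_e$ is \emph{singular-valued}, not nilpotent-valued, and nothing in the remaining coefficients (which mix contributions from all the $h_k$) obviously forces $h_e(\gX^{(n)})=0$. This inductive step does not follow from what you have, and it is precisely the crux of the argument. The paper avoids this obstacle entirely: from $\det f_1(\gX^{(n)})=1$ it applies the adjugate identity
\[
f_1(\gX^{(n)})\,\adj f_1(\gX^{(n)})=I,
\]
notes that both factors lie in the trace ring $R$ of $n\times n$ generic matrices, and that $R$ is a \emph{graded domain} (Formanek); in an $\N$-graded domain the only solution to $ab=1$ has $\deg a=\deg b=0$, so $f_1(\gX^{(n)})=I$, i.e., $1-f_1$ is a PI of degree $\le\delta-1$, contradicting $2n\ge\delta$. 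This handles the general (non-homogeneous) case in one stroke, without homogenization, elementary symmetric functions, or degree-stripping.

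For part (2) your route (Cohn theory to bound ``entry-degrees'' of matrix factors $M_1,M_2$, then Amitsur--Levitzki for $\opm_d(\GM_n(g))$) is not what the paper does, and it is not clear it can be made to yield the stated $n_2$: the factors $M_i$ of a monic pencil $L$ need not be linear, and extracting a degree bound on them from Cohn's theory that produces precisely the quadratic-in-$d$ expression for $n_2$ is unsubstantiated. The paper instead uses the invariant-subspace/block-triangularization structure of monic pencils: if $L$ is not an atom, then (up to similarity) $\det L(\gX^{(n)})=\det L_1(\gX^{(n)})\det L_2(\gX^{(n)})$ for monic pencils $L_i$ of size $\le d-1$ with non–jointly-nilpotent coefficients, and non-constancy of each $\det L_i(\gX^{(n)})$ for $n\ge n_2$ is imported from \cite[Proposition~3.3]{KV}, which is where the explicit $n_2$ originates. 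You should either reproduce that pencil-decomposition argument or supply the [KV]-type quantitative estimate; as written, part (2) is a sketch that doesn't land on the stated bound.
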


\begin{proof}
(1) If $f=f_1f_2$ for non-constant $f_1,f_2\in\px$, then $\deg f_i\le \delta-1$, so it suffices to show that $\det f_i(\gX^{(n)})$ is not constant for $i=1,2$. Suppose $\det f_i(\gX^{(n)})=1$. By the Cayley-Hamilton theorem we have
\begin{equation}\label{e:ch}
f_1(\gX^{(n)})\adj f_1(\gX^{(n)})=1,
\end{equation}
where $\adj M$ denotes the adjugate of a square matrix $M$ \cite[Section 1.9]{MN}. Note that $f_1(\gX^{(n)})$ and $\adj f_1(\gX^{(n)})$ both belong to the {\it trace ring} of $n\times n$ matrices, a unital $\kk$-algebra $R$ generated by $\gX^{(n)}_j$ and $\tr(w(\gX^{(n)}))$ for $w\in\mx$. By \cite[Section 5]{For}, $R$ is a graded domain, where the grading is imposed by the total degree in $\opm_{n}(\kk[\ulxi])$. Therefore \eqref{e:ch} implies $\deg f_1(\gX^{(n)})=0$ and so $f_1(\gX^{(n)})=1$. Hence $1-f_1$ is a polynomial identity for $n\times n$ matrices of degree at most $\delta-1$, which contradicts $2n>\delta-1$. 

(2) If $L$ is not an atom, then $\det L(\gX^{(n)})=\det L_1(\gX^{(n)})\det L_2(\gX^{(n)})$ for monic pencils $L_i$ of size at most $d-1$ whose coefficients are not jointly nilpotent. Then $\det L_i(\gX^{(n)})$ is not constant for $i=1,2$ by \cite[Proposition 3.3]{KV}.
\end{proof}

\section{Pencils with polynomial free locus and matrix perturbations}\label{s6}

The purpose of this section is twofold. In Subsection \ref{ss63} we characterize monic pencils whose free locus is the free locus of a noncommutative polynomial using state space realization theory. This leads to an efficient algorithm for checking the equality of free loci presented in Subsection \ref{ss64}. Using properties of minimal factorizations of realizations we then obtain an intriguing statement about invariant subspaces of rank-one perturbations of jointly nilpotent matrices (Theorem \ref{t:pert}).

\subsection{Perturbations and invariant subspaces}\label{ss61}
 
 While perturbations of matrices is a classical
theory \cite{Kat},  our path takes a different direction than the classical ones. Given a $g$-tuple $A=(A_1,\dots,A_g)$ of matrices in $\mat{d}$ we shall consider  perturbations
called {\bf right rank-one perturbations}, namely, ones of the form
$$  A_j + b_j c^\ti  \qquad j= 1, \dots, g$$
for $b_j,c \in \kk^d$. The perturbation is called {\bf non-degenerate} if
\begin{enumerate}[(i)]
	\item $\{b_1,\dots,b_g\}$ is not contained in a non-trivial invariant subspace for $A$,
	\item $c$ does not lie in a non-trivial invariant subspace for $A^\ti$.
\end{enumerate}

One  consequence of  Theorem \ref{t:irrpoly} relates invariant subspaces for 
a matrix tuple to invariant subspaces for its right rank-one perturbations.

\begin{thm}\label{t:pert}
Let $A\in\mat{d}^g$ be a tuple of jointly nilpotent matrices. If $\cS$ is an invariant subspace of a non-degenerate right rank-one
perturbation of $A$, then there is a complementary space
$\cS^\times$, $\kk^d=\cS\dotplus \cS^\times$,
 which is invariant under $A$.
\end{thm}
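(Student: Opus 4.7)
The plan is to pass to noncommutative state-space realizations, transform the invariance of $\cS$ into a factorization of a polynomial $f$ via Theorem~\ref{t:irrpoly}, and then read off the $A$-invariant complement from a minimal factorization of $f$. Set $A'_j = A_j+b_jc^\ti$, form the monic pencil $L=I-\sum_j A'_j x_j$, and define the Fornasini-Marchesini polynomial
$$
f(x) \;=\; 1 - c^\ti\Bigl(I-\sum_j A_j x_j\Bigr)^{-1}\sum_j b_j x_j \in \px,
$$
which lies in $\px$ because $(I-\sum_j A_j x_j)^{-1}$ is polynomial by joint nilpotency of $A$. A pair of Schur complements on $\begin{pmatrix} I-\sum_j A_j x_j & \sum_j b_j x_j \\ c^\ti & 1 \end{pmatrix}$, together with $\det(I-\sum_j A_j\otimes X_j)=1$ for every matrix tuple $X$, gives $\det L(X)=\det f(X)$ and hence $\fl(L)=\fl(f)$. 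Non-degeneracy of the rank-one perturbation is exactly joint controllability and observability of $(A,(b_j),c)$, which forces both the realization and $L$ to be of minimal size (this is essentially Corollary~\ref{c:polyloc}).

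Since $\cS$ is $A'$-invariant, a basis adapted to $\cS$ block-upper-triangularizes $L$ as $\begin{pmatrix} L_1 & \star \\ 0 & L_2 \end{pmatrix}$, so $\det L = \det L_1\cdot \det L_2$ and $\fl(L)=\fl(L_1)\cup\fl(L_2)$. By Theorem~\ref{t:irrpoly}(1) each atom of $f$ corresponds to an irreducible factor of $\det f$ for large $n$, and the above determinant identity partitions these atoms (with multiplicity) between $\det L_1$ and $\det L_2$. This partition yields a factorization $f=f_1 f_2$ in $\px$ with $f_i(0)=1$ and $\fl(f_i)=\fl(L_i)$.

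Each $f_i$ now admits its own minimal Fornasini-Marchesini realization with jointly nilpotent state matrix, and their cascade (series) connection gives an explicit realization of $f=f_1f_2$ of size $d_1+d_2$ whose state matrix is block upper triangular and jointly nilpotent; in this cascade realization one block summand is $A'$-invariant and a complementary one is $A$-invariant. Minimality of $(A,(b_j),c)$ forces $d_1+d_2=d$, and the state-space isomorphism theorem for minimal Fornasini-Marchesini realizations provides a similarity identifying the cascade realization with $(A,(b_j),c)$. Transporting the block decomposition across this similarity produces complementary subspaces of $\kk^d$, one $A'$-invariant of dimension $\dim\cS$ and the other $A$-invariant; identifying the $A'$-invariant one with $\cS$ will give the desired $\cS^\times$.

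The hard part will be this last identification, because the similarity with the cascade realization only provides \emph{some} $A'$-invariant subspace of the right dimension, not $\cS$ itself. Closing this gap will require a noncommutative analogue of the Bart-Gohberg-Kaashoek bijection between minimal factorizations of $f$ and pairs of complementary $(A',A)$-invariant subspaces, together with the observation that the factorization $f=f_1f_2$ constructed above is by design the one supported by the projection onto $\cS$; this forces the $A'$-invariant block of the cascade realization to correspond to $\cS$ itself. A secondary subtlety is that Theorem~\ref{t:irrpoly} pins down atomic factors only up to stable associativity and ordering, so when the order of atoms in $f_1 f_2$ interacts with the cascade construction — especially in the presence of repeated atoms — some additional bookkeeping will be required.
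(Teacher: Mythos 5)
Your proposal follows the same high-level architecture as the paper's proof: pass to a Fornasini--Marchesini realization, reduce to a nontrivial factorization of $f$ via Theorem~\ref{t:irrpoly}, and extract the $A$-invariant complement from the Bart--Gohberg--Kaashoek correspondence between minimal factorizations and complementary invariant-subspace pairs. You also correctly deduced, essentially by Lemma~\ref{l:flip1} and Proposition~\ref{p:polyfac}, that non-degeneracy means minimality and that $\fl(L)=\fl(f)$.

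However, the proposal has a genuine gap, and you yourself put your finger on it: the BGKR correspondence only produces pairs $(\cS',\cS'^{\times})$ of subspaces that already sit in a direct-sum decomposition, so invoking it merely gives you \emph{some} $A'$-invariant subspace admitting an $A$-invariant complement, not the particular $\cS$ you were given. Your sketch ``the factorization $f=f_1f_2$ constructed above is by design the one supported by $\cS$'' is not substantiated and in fact does not follow from what precedes it. There are two problems. First, knowing that the multiset of atoms of $\det f$ splits between $\det L_1$ and $\det L_2$ does \emph{not} by itself produce a noncommutative factorization $f=f_1f_2$ in which $\det f_1$ and $\det f_2$ realize exactly that split in that order: ordering in noncommutative atomic factorizations is rigid, and an arbitrary partition of atomic factors need not be implementable by a left/right split. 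Second, even granting a factorization $f=f_1f_2$ with the correct determinants, the BGKR pair associated to it is produced by the minimal cascade realization, and identifying its $A'$-invariant member with $\cS$ requires an argument, not an appeal to design. The paper closes this gap by a hands-on manipulation of the chain of invariant subspaces $\{0\}=\cV_0\subsetneq\cV_1\subsetneq\cdots\subsetneq\cV_\ell=\kk^d$ coming from the block realization~\eqref{e:big}: after first reducing to $\cS$ irreducible, it locates $i$ with $\cV_{i-1}\subsetneq\cS\subseteq\cV_i$, deduces $\cV_i=\cV_{i-1}\dotplus\cS$ and $b_{i'}c_i^\ti=0$ for $i'<i$, and then performs a basis change preserving the block structure that replaces $\cV_1$ by $\cS$; this exhibits a minimal factorization whose BGKR pair has $\cS$ as its first entry, and the general $\cS$ is handled by iterating over a composition series. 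Precisely that chain argument is the content missing from your proposal.
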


To prove this we shall use state space systems realizations, a technique closely related to the ``linearizations"
introduced in the proof of Lemma \ref{l:sa}. The proof of Theorem \ref{t:pert} concludes in Subsection \ref{sss:pfPert}.

\subsection{State space realizations}\label{ss62}
Next we introduce some essential background.
Let $\rx$ be the free skew field \cite{Coh} and $\rx_0\subset\rx$ the subring of noncommutative rational functions that are regular at the origin \cite{KVV1,Vol}.  Each $\rr\in\rx_0$ admits a {\bf noncommutative Fornasini–Marchesini state space realization} (shortly an FM-realization)
\begin{equation}\label{e:fm}
\rr=\delta+c^\ti L^{-1}b,
\end{equation}
where $\delta\in\kk$, $c\in\kk^d$, $b=\sum_jb_jx_j$ for $b_j\in\kk^d$ and $L=I-\sum_jA_jx_j$ for $A_j\in\mat{d}$; see e.g. \cite[Section 2.1]{BGM}. Here $d\in\N$ is the size of realization \eqref{e:fm}. 

For future use we recall some well-known facts about minimal FM-realizations.

\begin{enumerate}
\item We say that the realization \eqref{e:fm} is {\bf controllable} if
$$\spa\left\{w(A)b_j\colon w\in\mx, 1\le j\le g\right\}=\kk^d$$
and {\bf observable} if
$$\spa\left\{w(A)^\ti c\colon w\in\mx\right\}=\kk^d.$$
By \cite[Theorem 9.1]{BGM}, the realization \eqref{e:fm} is minimal if and only if it is observable and controllable.
\item A minimal realization of $\rr$ is unique up to similarity \cite[Theorem 8.2]{BGM}.

\item The domain of $\rr$ is precisely the complement of $\fl(L)$ if \eqref{e:fm} is minimal by \cite[Theorem 3.1]{KVV1} and \cite[Theorem 3.10]{Vol1}.
\item
Lastly, if $\rr(0)=\delta\neq0$ (equivalently, $\rr^{-1}\in\rx_0$), then by \cite[Theorem 4.3]{BGM} we have an FM-realization
\begin{equation}\label{e:fmi}
\rr^{-1}=\delta^{-1}+(-\delta^{-1}c^\ti)\left(L^\times\right)^{-1}(\delta^{-1}b),
\end{equation}
of $\rr^{-1}$, where $L^\times=I-\sum_jA^\times_jx_j$ and
\begin{equation}\label{e:cross}
A^\times_j=A_j-\delta^{-1}b_jc^\ti.
\end{equation}
Because the realizations \eqref{e:fm} and \eqref{e:fmi} are of the same size, we see that \eqref{e:fm} is minimal for $\rr$ if and only if \eqref{e:fmi} is minimal for $\rr^{-1}$.
\end{enumerate}

\begin{rem}
The linearization trick \eqref{e:42} when inverted gives a
type of a realization: if $e^\ti= (1, 0, \dots, 0)$, then
$$f^{-1} = e^\ti \left(f^{-1} \oplus I \right) e =
e^\ti Q^{-1}  L^{-1} P^{-1} e=
e^\ti L^{-1} e
$$
using that $P^{-1} $ (resp. $Q^{-1} $) is upper (resp. lower) unitriangular. Recall that $L$ is a monic linear pencil. This is often called a descriptor realization of $f^{-1}$.

To convert to an FM-realization we use the assumption $f(0) =1$ 
which makes $\delta =1$. Then
$$e^\ti L^{-1} e -1= e^\ti L^{-1} (I - L)e,$$
so $f^{-1}$ has an FM-realization with $L=I-\sum_jA_jx_j$, $c=e$, $b_j=A_je$ and $\delta =1$. Most importantly, the representing pencils for the FM and descriptor realizations are the same.
\end{rem}

\subsection{Flip-poly pencils}\label{ss63}
Next we define the pencils to which we shall associate polynomial free loci. A monic pencil $L=I-\sum_jA_jx_j$ is {\bf flip-poly} if $A_j=N_j+E_j$, where $N_j$ are jointly nilpotent matrices and $\codim(\bigcap_j \ker E_j)\le1$.

\begin{lem}\label{l:flip1}
Let $f\in\px$ and $f(0)=1$. If $L$ is a monic pencil appearing in a minimal realization of $f^{-1}$, then $L$ is flip-poly, the intersection of kernels of its coefficients is trivial and $\det f(\gX^{(n)})=\det L(\gX^{(n)})$ for all $n\in\N$.
\end{lem}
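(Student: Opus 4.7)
The plan is to leverage the inversion formula \eqref{e:fmi} applied to the given minimal FM-realization of $f^{-1}$.  Writing this realization as $f^{-1}=1+c^\ti L^{-1}b$ (so $\delta=f^{-1}(0)=1$), the formula supplies a minimal FM-realization of $f=(f^{-1})^{-1}$ whose pencil is $L^\times=I_d-\sum_j A_j^\times x_j$ with $A_j^\times=A_j-b_jc^\ti$.  Because $f$ is a polynomial, its domain is all of $\cM^g$; by fact (3) on minimal realizations this forces $\fl(L^\times)=\emptyset$, and fact (1) on free loci then yields joint nilpotency of $A_1^\times,\dots,A_g^\times$.  Setting $N_j:=A_j^\times$ and $E_j:=b_jc^\ti$ gives $A_j=N_j+E_j$ with $N_j$ jointly nilpotent, $\rk E_j\le 1$, and $\bigcap_j\ker E_j\supseteq\ker c^\ti$ of codimension at most one, which is precisely the flip-poly conclusion.

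I would next read off the determinantal equality from a double Schur complement on the pencil
$$M=\begin{pmatrix}L & b \\ -c^\ti & 1\end{pmatrix}\in\opm_{d+1}(\px),$$
evaluated at $\gX^{(n)}$.  Eliminating the upper-left block gives $\det M(\gX^{(n)})=\det L(\gX^{(n)})\cdot\det f(\gX^{(n)})^{-1}$ via the realization identity $1+c^\ti L^{-1}b=f^{-1}$, while eliminating the lower-right block gives $\det M(\gX^{(n)})=\det(L+bc^\ti)(\gX^{(n)})=\det L^\times(\gX^{(n)})$.  Joint nilpotency of the $A_j^\times$ makes $L^\times(\gX^{(n)})$ unipotent, so its determinant equals $1$, and the desired identity $\det L(\gX^{(n)})=\det f(\gX^{(n)})$ follows.

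The main obstacle I anticipate is the triviality of $\bigcap_j\ker A_j$.  For any $v$ in this intersection, every word $w(A)$ of positive length kills $v$ (the innermost letter $A_{k_r}$ annihilates it), so $v\perp w(A)^\ti c$ whenever $|w|\ge 1$.  If additionally $c^\ti v=0$, then observability of the realization of $f^{-1}$, i.e.\ $\spa\{w(A)^\ti c:w\in\mx\}=\kk^d$, forces $v=0$.  If instead $c^\ti v\ne 0$, the identity $A_j^\times v=A_jv-b_jc^\ti v=-(c^\ti v)b_j$ places every $b_j$ in $A_j^\times\kk^d$, so $\spa\{w(A^\times) b_j:w\in\mx,\,j\}\subseteq\sum_k A_k^\times \kk^d$.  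Joint nilpotency of $A^\times$ yields a common invariant flag $0\subsetneq V_1\subsetneq\cdots\subsetneq V_d=\kk^d$ with $A_k^\times V_d\subseteq V_{d-1}$, making the right-hand side a proper subspace of $\kk^d$; this contradicts controllability of the minimal realization of $f$ afforded by $L^\times$, so we must have $c^\ti v=0$ and hence $v=0$.
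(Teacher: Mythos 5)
Your proof is correct and follows essentially the same route as the paper's. You start from the given minimal realization of $f^{-1}$ and apply the inversion formula \eqref{e:fmi} to extract a minimal realization of $f$ with pencil $L^\times$, whereas the paper starts from a minimal realization of $f$ with jointly nilpotent pencil $L_0$ and passes to $f^{-1}$; these are two ways of invoking the same fact about how minimality transfers under the map $\rr\mapsto\rr^{-1}$, and your choice of $N_j=A_j^\times$, $E_j=b_jc^\ti$ matches the paper's $A_j=N_j+b_jc^\ti$ after the obvious relabeling. The determinant identity is obtained by you via two Schur complements on the same bordered block matrix that the paper factors explicitly into unitriangular pieces, and the argument for triviality of $\bigcap_j\ker A_j$ — splitting into $c^\ti v=0$ (observability) and $c^\ti v\neq0$ (controllability plus joint nilpotency of $A^\times$) — is the paper's argument verbatim in spirit.
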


\begin{proof}
Let $f=1-c^\ti L_0^{-1}b$ with $L_0=I-\sum_jN_jx_j$ be a minimal realization. Since $\dom f=\cM^g$, $L_0$ is invertible at every matrix point, so $\fl(L_0)=\emptyset$ and hence $N_j$ are jointly nilpotent matrices by \cite[Corollary 3.4]{KV}. Since minimal realizations are unique up to similarity, we can assume that $L=I-\sum_jA_jx_j$ where $A_j=N_j+b_jc^\ti$. Then $L$  is flip-poly and $f^{-1}=1+c^\ti L^{-1}b$ is a minimal realization by \eqref{e:fmi}.

Since
$$
\begin{pmatrix}
I & 0 \\ c^\ti L_0^{-1} & 1
\end{pmatrix}
\begin{pmatrix}
L_0 & 0 \\ 0 & f
\end{pmatrix}
\begin{pmatrix}
I & L_0^{-1}b \\ 0 & 1
\end{pmatrix}
=
\begin{pmatrix}
L_0 & b \\ c^\ti & 1
\end{pmatrix}
=
\begin{pmatrix}
I & b \\ 0 & 1
\end{pmatrix}
\begin{pmatrix}
L & 0 \\ 0 & 1
\end{pmatrix}
\begin{pmatrix}
I & 0 \\ c^\ti & 1
\end{pmatrix}
$$
and $N_j$ are jointly nilpotent, we have
$$\det f(\gX^{n})=\det L_0(\gX^{(n)}) \det f(\gX^{(n)})=\det L(\gX^{(n)})$$
for all $n\in\N$.

Lastly suppose there exists a nonzero $v\in\bigcap_j\ker A_j$. Since the realization $1+c^\ti L^{-1}b$ is observable, it follows that $c^\ti v\neq0$. Because $N_jv+(c^\ti v)b_j=A_jv=0$ holds for all $j$, we conclude that
$$\spa\left\{w(N)b_j\colon w\in\mx, 1\le j\le g\right\}\subseteq \sum_j \ran N_j,$$
which contradicts controllability of $1-c^\ti L_0^{-1}b$. Hence $\bigcap_j\ker A_j=\{0\}$.
\end{proof}

\begin{prop}\label{p:flip2}
For every flip-poly pencil $L$ there exists a flip-poly pencil $L_0$ such that $\fl(L)=\fl(L_0)$ and $L_0$ appears in a minimal realization of $f^{-1}$ for some $f\in\px$ with $f(0)=1$.
\end{prop}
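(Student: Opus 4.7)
The plan is to realize every flip-poly pencil $L$ as the cross pencil, in the sense of \eqref{e:cross}, of a nilpotent FM-realization; the resulting (possibly non-minimal) realization will be of $f^{-1}$ for an explicit noncommutative polynomial $f$, and passing to a minimal realization will return a pencil $L_0$ to which Lemma \ref{l:flip1} applies.

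Concretely, write $L=I-\sum_j A_jx_j$ with $A_j=N_j+E_j$ as in the definition of flip-poly. The hypothesis $\codim\bigl(\bigcap_j\ker E_j\bigr)\le 1$ lets me pick a single vector $c\in\kk^d$ and vectors $b_j\in\kk^d$ with $E_j=b_jc^\ti$ for every $j$ (the degenerate case $c=0$ is allowed). Setting $L_N:=I-\sum_j N_jx_j$ and $b:=\sum_j b_jx_j$, joint nilpotence of the $N_j$ forces $\bigl(\sum_j N_jx_j\bigr)^d=0$ in $\opm_d(\px)$, so $L_N^{-1}=\sum_{k=0}^{d-1}\bigl(\sum_j N_jx_j\bigr)^k$ is polynomial and hence
\[
f:=1-c^\ti L_N^{-1}b\in\px,\qquad f(0)=1,
\]
is a genuine noncommutative polynomial. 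Using $L=L_N-bc^\ti$, the block factorization
\[
\begin{pmatrix}L_N & b\\ c^\ti & 1\end{pmatrix}
=\begin{pmatrix}I & b\\ 0 & 1\end{pmatrix}
\begin{pmatrix}L & 0\\ 0 & 1\end{pmatrix}
\begin{pmatrix}I & 0\\ c^\ti & 1\end{pmatrix}
\]
combined with a Schur complement on the top-left block yields $\det L=\det L_N\cdot \det f$; since $\det L_N(\gX^{(n)})\equiv 1$ by joint nilpotence, this already gives $\fl(L)=\fl(f)$.

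Next, observe that $A_j=N_j+b_jc^\ti$ is exactly the cross coefficient produced by applying \eqref{e:cross} to the FM-realization $f=1+(-c)^\ti L_N^{-1}b$ with $\delta=1$, so \eqref{e:fmi} returns the FM-realization $f^{-1}=1+c^\ti L^{-1}b$ whose pencil is the original $L$. This realization need not be minimal, but the standard reduction to the observable/controllable part extracts a minimal FM-realization $f^{-1}=\delta_0+c_0^\ti L_0^{-1}b_0$. Applying Lemma \ref{l:flip1} to this minimal realization tells us that $L_0$ is flip-poly and $\det L_0(\gX^{(n)})=\det f(\gX^{(n)})$, so $\fl(L_0)=\fl(f)=\fl(L)$, which is the desired conclusion.

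The conceptual crux, and really the only non-routine step, is recognizing that the flip-poly structure on $L$ is exactly the algebraic shadow of inverting an FM-realization whose underlying pencil has jointly nilpotent coefficients; after that the proof is a Schur complement computation plus Lemma \ref{l:flip1}. The one point worth flagging is the degenerate case $c=0$ (or all $b_j=0$): then $f=1$, the minimal realization of $f^{-1}=1$ has size $0$, and $\fl(L_0)=\emptyset=\fl(L)$ holds trivially, so no separate argument is needed.
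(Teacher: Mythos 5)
Your proof is correct, and the core construction is the same as the paper's: you both recognize the flip-poly pencil $L$ as the cross pencil $L_N^\times$ of an FM-realization whose underlying pencil $L_N$ has jointly nilpotent coefficients, take $f$ to be the corresponding noncommutative polynomial, and pass to a minimal realization of $f^{-1}$. Where you diverge is in the finishing move: you establish $\fl(L)=\fl(f)$ via a Schur complement and $\det L_N(\gX^{(n)})\equiv 1$, and then cite Lemma~\ref{l:flip1} wholesale to get that the minimized pencil $L_0$ is flip-poly with $\det L_0(\gX^{(n)})=\det f(\gX^{(n)})$. The paper instead does not invoke Lemma~\ref{l:flip1} at this point; it walks through the two-step minimization algorithm directly, showing that the controllability space $\cS_1=\spa\{w(A)b_j\}$ is also $N$-invariant (so the restricted pencil stays flip-poly) and that the complementary quotient block has jointly nilpotent coefficients (so the cut-off free locus is empty). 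Your route is shorter and offloads the work to an already-proved lemma; the paper's route is more self-contained and exhibits explicitly the invariant-subspace bookkeeping that reappears in the proof of Theorem~\ref{t:pert}. Both are sound, and your handling of the degenerate $c=0$ case is consistent with the paper's implicit treatment.
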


\begin{proof}
By assumption we have $L=I-\sum_j(N_j-b_jc^\ti)x_j$ for some $b_j,c\in\kk^d$ and jointly nilpotent $N_j\in\mat{d}$. Let
\begin{equation}\label{e:f0}
f=1+c^\ti\left(I-\sum_jN_jx_j\right)^{-1}\left(\sum_jb_jx_j\right).
\end{equation}
Since $N_j$ are jointly nilpotent, $f$ is a noncommutative polynomial. By \eqref{e:fmi} we have
\begin{equation}\label{e:min}
f^{-1}=1-c^\ti L^{-1}\left(\sum_jb_jx_j\right).
\end{equation}
While the realization \eqref{e:min} is not necessarily minimal, it suffices to prove that its minimization results in a realization with a flip-poly pencil $L_0$ satisfying $\fl(L_0)=\fl(L)$.

Recall that the minimization algorithm comprises of two steps. A starting realization $c^\ti(I-\sum_jA_jx_J)^{-1}b$ is first restricted to the invariant subspace $\spa\{w(A)b_j\}_{w,j}$. The resulting realization $c'^\ti(I-\sum_jA_j'x_J)^{-1}b'$ is then restricted to the invariant subspace $\spa\{w(A')^\ti c'\}_w$, which yields a minimal realization.

Denote $A_j=N_j-b_jc^\ti$ and assume $\cS_1=\spa\{w(A)b_j\}_{w,j}\neq\kk^d$. Let $\cS_2\subset\kk^d$ be a complementary space of $\cS_1$, i.e., $\cS_1\dotplus \cS_2=\kk^d$. For $i=1,2$ let $\iota_i:\cS_i\to\kk^d$ and $\pi_i:\kk^d\to \cS_i$ be the corresponding embeddings and projections, respectively. Then $\pi_1\circ L \circ \iota_1$ is the pencil produced after the first step of minimization and
$$\fl(L)=\fl(\pi_1\circ L \circ \iota_1)\cup\fl(\pi_2\circ L \circ \iota_2).$$
Since $A_jv=N_jv-(c^\ti v)b_j$ for every $v\in\kk^d$ and $b_j\in\cS_1$, we see that $\cS_1$ is also invariant under $N_j$. Therefore $\pi_1\circ L \circ \iota_1$ is a flip-poly pencil. On the other hand we have $A_jv-N_jv\in\cS_1$ for every $v\in\kk^d$, so $\pi_2\circ A_j\circ \iota_2$ are jointly nilpotent and hence $\fl(\pi_2\circ L \circ \iota_2)=\emptyset$. 

Analogous reasoning holds for the second step of minimization, so the pencil appearing in a minimal realization of $f^{-1}$ is flip-poly and its free locus equals $\fl(L)$.
\end{proof}

\begin{cor}\label{c:polyloc}
The set of free loci of noncommutative polynomials coincides with the set of free loci of flip-poly pencils.
\end{cor}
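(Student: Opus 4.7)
The plan is to derive Corollary \ref{c:polyloc} as an immediate consequence of the two preceding results, Lemma \ref{l:flip1} and Proposition \ref{p:flip2}; the real content is already present in them, so what remains is essentially a bookkeeping exercise splicing the two halves together.

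For the inclusion of polynomial free loci into flip-poly free loci, I would begin with a polynomial $f\in\px$ normalized so that $f(0)=1$ (which one obtains after rescaling by $f(0)^{-1}$ whenever this is a nonzero scalar). Since $f^{-1}\in\rx_0$, it admits a minimal FM-realization, and Lemma \ref{l:flip1} hands back a flip-poly pencil $L$ appearing in that realization together with the identity $\det f(\gX^{(n)})=\det L(\gX^{(n)})$ for every $n$. Passing to zero sets level by level gives $\fl_n(f)=\fl_n(L)$ for all $n$, and hence $\fl(f)=\fl(L)$.

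For the reverse inclusion, I would start with an arbitrary flip-poly pencil $L$ and invoke Proposition \ref{p:flip2} to produce another flip-poly pencil $L_0$ with $\fl(L_0)=\fl(L)$ and such that $L_0$ appears in a minimal FM-realization of $f^{-1}$ for some $f\in\px$ with $f(0)=1$. A second application of Lemma \ref{l:flip1} then yields $\det L_0(\gX^{(n)})=\det f(\gX^{(n)})$ for all $n$, whence $\fl(L)=\fl(L_0)=\fl(f)$ exhibits $\fl(L)$ as a polynomial free locus.

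I anticipate no genuine obstacle here: each direction reduces to a two-line argument once Lemma \ref{l:flip1} and Proposition \ref{p:flip2} are in hand. The only cosmetic point is the normalization $f(0)=1$ built into both results, which is harmless since rescaling by a nonzero scalar preserves $\fl(f)$; the case $f(\kk^g)\cap\kk^\ast\neq\emptyset$ can further be reduced to this one by the scalar-point translation of Remark \ref{r:shift}.
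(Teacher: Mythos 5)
Your argument is correct and is exactly the paper's proof: the paper simply states the corollary as a "direct consequence of Lemma \ref{l:flip1} and Proposition \ref{p:flip2}," and your write-up fills in precisely the level-by-level matching of determinants that makes that reduction explicit. One small caveat on your final remark: translating $\ulx$ by a scalar point as in Remark \ref{r:shift} shifts $\fl(f)$ rather than fixing it, so it does not literally reduce the case $f(0)=0$, $f(\kk^g)\cap\kk^\ast\neq\emptyset$ to the normalized case when the object of interest is the free locus as a set; but this is outside the scope the paper itself addresses, since both ingredients are stated with $f(0)=1$ and flip-poly pencils are monic, so their free loci omit the origin anyway.
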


\begin{proof}
Direct consequence of Lemma \ref{l:flip1} and Proposition \ref{p:flip2}.
\end{proof}

\begin{prop}\label{p:nonflip}
Let $L$ be an irreducible pencil. If $L$ is not flip-poly, then $\fl(L)\neq\fl(f)$ for all $f\in\px$.
\end{prop}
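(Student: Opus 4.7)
The plan is to prove the contrapositive: assuming $\fl(L)=\fl(f)$ for some $f\in\px$, deduce that $L$ is flip-poly. A preliminary observation is that $L$ is itself an atom in $\opm_{d_L}(\px)$: a factorization $L=gh$ with $g,h$ non-invertible would give $\det g(\gX^{(n)})\cdot\det h(\gX^{(n)})=\det L(\gX^{(n)})$, which is irreducible for large $n$ by Theorem~\ref{t:main}, forcing one of the factors to have constant determinant and hence lie in $\GL_{d_L}(\px)$. By Remark~\ref{r:shift} I normalize $f(0)=1$ and factor $f=f_1\cdots f_k$ into atoms in $\px$. Each $\fl_n(f_i)$ is eventually an irreducible hypersurface by Corollary~\ref{c:loc}, so irreducibility of $\fl_n(L)=\bigcup_i\fl_n(f_i)$ forces $\fl(f_i)=\fl(L)$ for all $i$. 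Fix $\tilde f:=f_1$; Theorem~\ref{t:irrpoly}(2) makes the two atoms $L$ and $\tilde f$ stably associated.

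Next, I would produce the candidate flip-poly pencil from realization theory. Take a minimal FM-realization $\tilde f=1+c^\ti L_0^{-1}b$ with nilpotent pencil $L_0=I-\sum N_jx_j$ and the induced minimal realization $\tilde f^{-1}=1-c^\ti\tilde L^{-1}b$ whose pencil $\tilde L=L_0+bc^\ti$ is flip-poly by Lemma~\ref{l:flip1}. Because $L_0^{-1}\in\opm_{d_0}(\px)$, two Schur-complement factorizations of $\begin{pmatrix}1 & -c^\ti\\ b & L_0\end{pmatrix}$---one via a left unipotent factor exhibiting $\tilde L$ as a stably-associated Schur complement, the other via a right unipotent factor exhibiting $\tilde f$ together with the $\px$-invertible $L_0$---present this matrix as stably associated both to $\tilde L$ and to $\tilde f$. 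Transitivity then yields that $\tilde L$ and $L$ are stably associated.

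The final step analyzes the block upper triangular decomposition \eqref{e:05} of $\tilde L$ with diagonal blocks $L_1,\ldots,L_\ell$, each either $I$ or irreducible. By the additivity of atomic length on short exact sequences of torsion modules \cite[Ch.~3]{Coh}, the atomic length of $\tilde L$ equals the number of irreducible $L_i$'s; since $\tilde L$ is stably associated to the atom $\tilde f$, it is itself an atom, so exactly one $L_i$ is irreducible---call it $L'$. Stable associativity of $L$ and $\tilde L$ transfers to the unique atomic factors $L$ and $L'$, which being irreducible pencils with equal free loci are similar by the classification recalled at the start of Section~\ref{s2}. To show $\ell=1$: if $L_1=I$, then block upper triangularity makes the first block-column of every $\tilde A_j$ vanish, so the entire first-block subspace $\{(v_1,0,\ldots,0)\}$ lies in $\bigcap_j\ker\tilde A_j$, contradicting Lemma~\ref{l:flip1}; hence $L_1=L'$. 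The dual statement $\bigcap_j\ker\tilde A_j^\ti=\{0\}$, obtained by running Lemma~\ref{l:flip1}'s argument verbatim with $N_j^\ti$ in place of $N_j$ and observability in place of controllability, forces the analogous conclusion $L_\ell=L'$. Since $L'$ appears only once in the decomposition, $L_1=L_\ell$ forces $\ell=1$ and $\tilde L\sim L'\sim L$; as the flip-poly property is invariant under similarity (conjugating $\tilde A_j=N_j+b_jc^\ti$ keeps $N_j$ jointly nilpotent and the perturbation rank-one with common co-kernel direction), $L$ is flip-poly, contradicting the hypothesis.

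The main obstacles are (i) carefully invoking the Cohn-theoretic additivity of atomic length over short exact sequences, and (ii) establishing the transpose of Lemma~\ref{l:flip1}, which is a line-by-line transcription of its proof with controllability exchanged for observability. Once these two inputs are in place, the block-column kernel computation that identifies each identity block with a nontrivial piece of $\bigcap_j\ker\tilde A_j$ or $\bigcap_j\ker\tilde A_j^\ti$ is immediate.
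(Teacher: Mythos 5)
Your argument is correct and follows the same overall route as the paper's proof: replace $f$ by an atomic factor, pass to the minimal FM-realization of its inverse, let $\tilde L$ be the resulting flip-poly pencil from Lemma~\ref{l:flip1}, put it in block form \eqref{e:05}, show the block form is trivial (a single irreducible block), conclude similarity with $L$, and note that flip-poly is preserved under similarity. Where you differ is, first, in the justification of ``at most one diagonal block is irreducible'': you obtain this via stable associativity of $\tilde L$ with the atom $\tilde f$ (established by Schur complements) and Cohn's additivity of atomic length, whereas the paper gets it more directly from $\det f(\gX^{(n)})=\det\tilde L(\gX^{(n)})$ (Lemma~\ref{l:flip1}) together with the irreducibility of $\det f(\gX^{(n)})$ for large $n$ (Theorem~\ref{t:irrpoly}(1)); for this reason your preliminary remark that $L$ itself is an atom, which you need to invoke Theorem~\ref{t:irrpoly}(2), is dispensable in the paper's version. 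Second, and more substantively, you correctly identify that the single condition $\bigcap_j\ker\tilde A_j=\{0\}$ from Lemma~\ref{l:flip1} only forces $L_1\neq I$ and does not by itself yield $\ell=1$; the paper asserts irreducibility of $\tilde L$ tersely ``by writing it in the form \eqref{e:05}'', but closing that gap requires the dual statement $\bigcap_j\ker\tilde A_j^\ti=\{0\}$, which Lemma~\ref{l:flip1} does not record. You supply it by dualizing the lemma's proof; a small caveat is that it is not literally ``verbatim'' (beyond swapping $N_j\leftrightarrow N_j^\ti$ and controllability$\leftrightarrow$observability, the roles of the single vector $c$ and the tuple $(b_j)_j$ also interchange, so the initial step ``$c^\ti v\neq0$'' becomes ``$v^\ti b_{j_0}\neq0$ for some $j_0$''), but the dual does hold. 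In short: same approach, with your write-up filling in a step the paper leaves implicit.
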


\begin{proof}
Suppose $\fl(L) = \fl(f)$ for some $f\in\px$. Since $L$ is irreducible, we can assume that $f(0)=1$ and $f$ is irreducible by Theorems \ref{t:main} and \ref{t:irrpoly}. Let $L'$ be a monic pencil appearing in the realization of $f^{-1}$. By Lemma \ref{l:flip1} we have
$$L (\gX^{(n)})=\det f(\gX^{(n)})=\det L' (\gX^{(n)})$$
for all $n\in\N$. Since the intersection of kernels of coefficients of $L'$ is trivial, we deduce that $L'$ is irreducible by writing it in the form \eqref{e:05}. Therefore $L$ and $L'$ are similar and hence $L$ is flip-poly.
\end{proof}

\begin{exa}
Assume $A\in\mat{d}$ has an eigenvalue $\lambda\neq0$ with geometric multiplicity at least 2 and let $b,c\in\kk^d$ be arbitrary. Then an easy calculation shows that $\lambda$ is also an eigenvalue of $A+b c^\ti$, so $A+b c^\ti$ is not nilpotent. Therefore a monic pencil having $A$ as one of its coefficients is not flip-poly.

In particular, if $d\ge 3$, then there exist $A_1,A_2\in\mat{d}$ such that $A_1$ has a nonzero eigenvalue with geometric multiplicity at least 2 and $A_1,A_2$ generate $\mat{d}$. For example, one can choose $A_2$ to be the permutation matrix corresponding to the cycle $(1\ 2\ \dots\ d)$ and $A_1=\diag(1,\dots,1,-1)$. Then $L=I-A_1x_1-A_2x_2$ is an irreducible pencil that is not flip-poly, so $\fl(L)\neq \fl(f)$ for all $f\in\px$ by Proposition \ref{p:nonflip}.
\end{exa}

\begin{exa}
Let $L=I-A_1 x_1-A_2 x_2$ for $A,A_2\in \mat{2}$; we claim $\fl(L)=\fl(f)$ for some $f\in\px$ of degree at most $2$.

Looking at the zeros of the polynomial $\det(\gx_1 A_1+\gx_2 A_2)\in\kk[\gx_1,\gx_2]$ we see that there exists a nonzero $u\in\kk^2$ such that $A_1u$ and $A_2u$ are linearly dependent. If $A_1u,A_2u\in\kk \cdot u$, then $A_1$ and $A_2$ have a common eigenvector, so clearly $\fl(L)=\fl(\ell_1\ell_2)$ for some affine linear $\ell_i\in\px$. Otherwise we have $A_1u,A_2u\in\kk \cdot v$ for some $v\in\kk^2\setminus\kk\cdot u$. With respect to the basis $\{u,v\}$ of $\kk^2$ we have
$$A_j=\begin{pmatrix}
0 & \alpha_{j1} \\ \alpha_{j2} & \alpha_{j3}
\end{pmatrix}=
\begin{pmatrix}
0 & 0 \\ \alpha_{j2} & 0
\end{pmatrix}+
\begin{pmatrix} \alpha_{j1} \\ \alpha_{j3} \end{pmatrix}
\begin{pmatrix}0 & 1\end{pmatrix},$$
so $L$ is flip-poly and hence $L=\fl(f)$ for some $f\in\px$ of degree at most $2$ by \eqref{e:fmi}.
\end{exa}

\subsection{Minimal factorizations}\label{ss65}

A factorization $\rr=\rr_1\rr_2$ for $\rr,\rr_1,\rr_2\in\rx_0$ is {\bf minimal} if the size of the minimal realization of $\rr$ equals the sum of the sizes of minimal realizations of $\rr_1$ and $\rr_2$. That is, if $\rr_i=1+c_i^\ti L_i^{-1}b_i$ is a minimal realization for $i=1,2$, then $\rr=\rr_1\rr_2$ is a minimal factorization if and only if
\begin{equation}\label{e:minfac}
\rr=1+\begin{pmatrix}c_1^\ti & c_2^\ti \end{pmatrix}
\begin{pmatrix}
L_1 & b_1 c_2^\ti \\ 0 & L_2
\end{pmatrix}^{-1}
\begin{pmatrix}b_1 \\ b_2 \end{pmatrix}
\end{equation}
is a minimal realization by \cite[Theorem 4.1]{BGM}.

Let $\rr=\delta+c^\ti L^{-1}b$ be a minimal realization of size $d$ and $\delta\neq0$. In \cite[Section 4]{KVV1} it is explained that by the multivariable noncommutative version of \cite[Theorem 9.3]{BGKR}, minimal factorizations of $\rr$ are in one-to-one correspondence with pairs $(\cS,\cS^\times)$ of subspaces in $\kk^d$ such that
\begin{enumerate}[(a)]
	\item $\cS$ is invariant under $A_1,\dots,A_g$,
	\item $\cS^\times$ is invariant under $A_1^\times,\dots,A_g^\times$,
	\item $\cS\dotplus\cS^\times =\kk^d$.
\end{enumerate}

\begin{prop}\label{p:polyfac}
Let $f\in\px$ and $f(0)\neq0$. 
Minimal factorizations of $f$ are precisely polynomial factorizations of $f$.
\end{prop}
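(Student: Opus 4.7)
\medskip\noindent\textbf{Proof plan.} Both directions will rely on the bijection recalled in Subsection \ref{ss65}: minimal factorizations $f=\rr_1\rr_2$ in $\rx_0$ correspond to pairs $(\cS,\cS^\times)$ of complementary subspaces of $\kk^d$, where $d$ is the size of the minimal realization of $f$, such that $\cS$ is invariant under the pencil coefficients $N_1,\ldots,N_g$ of that realization and $\cS^\times$ is invariant under their $\times$-counterparts.

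For the implication that every minimal factorization is polynomial, I would normalize $f(0)=1$ so that Lemma \ref{l:flip1} guarantees that the $N_j$ are jointly nilpotent. In a basis of $\kk^d$ adapted to $\cS\dotplus\cS^\times$, each $N_j$ is block upper triangular, and formula \eqref{e:minfac} identifies the pencils $L_1,L_2$ in the minimal realizations of $\rr_1,\rr_2$ as having, for coefficients, exactly the diagonal blocks of the $N_j$. A product of block upper triangular matrices is again block upper triangular with diagonal blocks equal to the products of the diagonal blocks of the factors, so joint nilpotency of the $N_j$ passes to each diagonal block tuple. Consequently $L_i^{-1}$ is polynomial in $\ulx$, and hence so is $\rr_i=1+c_i^\ti L_i^{-1}b_i$.

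For the converse, given a polynomial factorization $f=f_1f_2$ with $f_i(0)=1$, let $R_i$ be a minimal realization of $f_i$ of size $d_i$. I would assemble from $R_1,R_2$ via \eqref{e:minfac} a concatenated realization $R'$ of $f$ of size $d_1+d_2$, whose pencil is block upper triangular with $L_{0,1},L_{0,2}$ on the diagonal (both with jointly nilpotent coefficients by Lemma \ref{l:flip1}) and off-diagonal block linear in $\ulx$ of the form $\pm b_1c_2^\ti$. It remains to show that $R'$ is itself a minimal realization of $f$: once this is established, $d(f)=d_1+d_2$ and so the factorization is minimal in the sense of Subsection \ref{ss65}.

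The main obstacle is precisely this minimality verification. Block upper triangularity of the pencil makes the subspace corresponding to the first block invariant under the pencil coefficients of $R'$, and the projection to the second block intertwines them with the coefficients of $L_{0,2}$; by controllability of $R_2$, the image of the reachable subspace of $R'$ under this projection is all of $\kk^{d_2}$. A dual argument for the adjoint realization gives the analogous surjectivity onto $\kk^{d_1}$ from observability of $R_1$. Promoting these partial surjections to full-dimensional equalities requires harnessing the coupling encoded by the off-diagonal block $\pm b_1c_2^\ti$ together with the complementary hypotheses, controllability of $R_1$ and observability of $R_2$; this cross-block interaction is the heart of the argument and the one step that demands a genuinely careful calculation.
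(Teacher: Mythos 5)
Your first direction (minimal factorization $\Rightarrow$ polynomial factorization) is correct, and it goes a genuinely different route from the paper: you argue via block upper triangularity to show the diagonal-block pencils inherit joint nilpotency and hence invert to polynomials, whereas the paper cites the domain identity $\dom f = \dom \rr' \cap \dom \rr''$ from \cite[Theorem 4.2]{KVV1} and then \cite[Theorem 4.2]{KV} to conclude $\rr', \rr'' \in \px$. Your version is more self-contained and a bit more explicit, though note a minor citation slip: Lemma \ref{l:flip1} as \emph{stated} concerns the pencil in the minimal realization of $f^{-1}$; the fact you actually want — that the pencil in the minimal realization of $f$ itself has jointly nilpotent coefficients — is established inside the \emph{proof} of that lemma (the $L_0$ there) rather than by the lemma's statement.

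The converse, however, is not actually proved. You correctly set up the product realization via \eqref{e:minfac} and correctly note that block triangularity immediately hands you the \emph{projected} controllability onto the $L_2$-block (from controllability of $R_2$) and the projected observability onto the $L_1$-block (from observability of $R_1$). But you then write that ``promoting these partial surjections to full-dimensional equalities requires harnessing the coupling encoded by the off-diagonal block $\pm b_1 c_2^\ti$ together with the complementary hypotheses,'' and stop there, declaring it ``the one step that demands a genuinely careful calculation.'' That step \emph{is} the proof; everything before it is routine. The paper devotes nearly all of its argument to exactly this point: it locates a word $w_0$ of maximal length with $\alpha = c'^\ti w_0(N') b'_{j_0} \neq 0$, sets $w_1 = w_0 x_{j_0}$, and then shows via the word-splitting identity \eqref{e:55} and the downward induction underlying \eqref{e:54} that the rows $c^\ti(w_1 w)(N)$ and $c^\ti w(N)$ together span $\kk^{d'+d''}$. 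Without some version of this (or an equivalent mechanism exploiting the nilpotent grading to isolate the top-degree contribution through the off-diagonal term), the cross-block observability and controllability do not follow, and indeed can fail for generic block upper triangular tuples without the nilpotency and maximal-length structure. So as written, the proof of the converse has a genuine gap — you have flagged the crux rather than filled it.
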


\begin{proof}
Let $f=\rr'\rr''$ be a minimal factorization. Then $\dom f=\dom \rr'\cap\dom\rr''$ by \cite[Theorem 4.2]{KVV1}. Consequently $\dom f=\cM^g$ implies $\dom\rr'=\dom\rr''=\cM^g$, so $\rr',\rr''\in\px$ by \cite[Theorem 4.2]{KV}.

Let $f=f'f''$ be a polynomial factorization; 
without loss of generality let $f(0)=f'(0)=f''(0)=1$. 
As already mentioned in the proof of Lemma \ref{l:flip1}, 
the coefficients of the pencil appearing in a minimal realization of a noncommutative polynomial are jointly nilpotent. 
By \eqref{e:minfac} it thus suffices to prove the following: if
\begin{equation}\label{e:51}
1+c'^\ti\left(I-\sum_j N'_jx_j\right)^{-1}\left(\sum_jb'_jx_j\right),\qquad
1+c''^\ti\left(I-\sum_j N''_jx_j\right)^{-1}\left(\sum_jb''_jx_j\right)
\end{equation}
are minimal realizations of size $d'$ and $d''$, respectively, 
where $N_j'$ are jointly nilpotent and $N''_j$ are jointly nilpotent, 
then the ``product realization''
\begin{equation}\label{e:52}
1+c^\ti\left(I-\sum_j N_jx_j\right)^{-1}\left(\sum_jb_jx_j\right),
\end{equation}
where
\begin{equation}\label{e:53}
c=\begin{pmatrix}c' \\ c''\end{pmatrix},\qquad
N_j=\begin{pmatrix}N'_j & b'_jc''^\ti \\ 0 & N''_j\end{pmatrix},\qquad
b_j=\begin{pmatrix}b'_j \\ b''_j\end{pmatrix},
\end{equation}
is minimal.

We now prove that the product system \eqref{e:52} is observable. Let $w_0\in\mx$ and $1\le j_0\le g$ be such that $\alpha=c'^\ti w_0(N')b'_{j_0}\neq0$ and $c'^\ti w(N')b'_j=0$ for all $|w|>|w_0|$ and $1\le j\le g$. Since $\spa\{w(N')b'_j\}_{w,j}=\kk^{d'}$, it follows that $c'^\ti w(N')=0$ for all $|w|>|w_0|$. Denote $w_1=w_0x_{j_0}$. We claim that
\begin{equation}\label{e:54}
\spa\left\{\alpha w(N'')^\ti c''+\sum_{i=0}^{|w_1|-1}\beta_i(w_1^{i:}w)(N'')^\ti c''\colon w\in\mx\right\}
=\kk^{d''}
\end{equation}
for arbitrary choice of $\beta_1,\dots,\beta_{|w_1|}\in\kk$. Here $w_1^{i:}$ is obtained by removing the first $i$ letters in $w$. Indeed, by induction on $k=1,\dots,d''$ we show that the sets of rows
$$\left\{\alpha c''^\ti w(N'')+\sum_{i=0}^{|w_1|-1}\beta_ic''^\ti (w_1^{i:}w)(N'')
\colon |w|\ge d''-k\right\},\qquad \left\{c''^\ti w(N'')\colon |w|\ge d''-k\right\}$$
span the same subspace and then \eqref{e:54} follows by $\spa\{w(N'')^\ti c''\colon w\in\mx\}=\kk^{d''}$, which holds by the minimality assumption.

A routine computation next shows that for $w=x_{j_1}\cdots x_{j_\ell}$ we have
\begin{equation}\label{e:55}
w(N)=\begin{pmatrix}
w(N') & \sum_{w=ux_jv} u(N')b'_{j}c''^\ti v(N'') \\
0 & w(N'')
\end{pmatrix}.
\end{equation}
Hence
\begin{equation}\label{e:57}
c^\ti w(N)=\begin{pmatrix} c'^\ti w(N') & \star\end{pmatrix}
\end{equation}
holds for all $w\in\mx$. On the other hand, \eqref{e:55} also implies
\begin{equation}\label{e:56}
c^\ti(w_1w)(N)=\begin{pmatrix} 0 & \alpha c''^\ti w(N'')
+\sum_{i=0}^{|w_1|-1}\beta_ic''^\ti (w_1^{i:}w)(N'')\end{pmatrix}
\end{equation}
for all $w\in\mx\setminus\{1\}$, where $\beta_i\in\kk$ depend on $w$. Finally, since the first realization in \eqref{e:51} is observable and \eqref{e:54} holds, \eqref{e:57} and \eqref{e:56} imply
$$\spa\left\{w(N)^\ti c\colon w\in\mx \right\}=\kk^{d'+d''},$$
hence \eqref{e:51} is an observable realization. By an analogous argument we check controllability, so \eqref{e:51} is a minimal FM-realization.
\end{proof}

\subsubsection{Proof of Theorem \ref{t:pert}}\label{sss:pfPert}

Let $B_j=A_j+b_jc^\ti$ for $j=1,\dots,g$ and
$$L_0=I-\sum_jA_jx_j,\qquad L=I-\sum_jB_jx_j,\qquad b=\sum_jb_jx_j.$$
In the language of FM-realizations, $A_j$ being nilpotent and $(B_j)_j$ being a non-degenerate perturbation of $(A_j)_j$ means that $1-c^\ti L_0^{-1}b$ is a minimal realization of a nonconstant $f\in\px$. By \eqref{e:fmi}, $1+c^\ti L^{-1}b$ is a minimal realization of $f^{-1}$.

If $\cS$ is a non-trivial invariant subspace for the $B_j$, then $L$ is similar to
$$\begin{pmatrix}L' & \star \\ 0 & L''\end{pmatrix}$$
for monic pencils $L'$ and $L''$. We claim that the coefficients of $L'$ and $L''$ are not jointly nilpotent. For this to hold we need to show that $B_j|_{\cS}$ are not jointly nilpotent and that the induced operators $\tilde{B}_j:\kk^d/\cS\to\kk^d/\cS$ are not jointly nilpotent.

If $B_j|_{\cS}$ are jointly nilpotent, there exists $v\in\cap_j\ker B_j\setminus\{0\}$. Since $\{w(B)^\ti c\}_w=\kk^d$, we have $c^\ti v\neq0$. Then $A_jv=B_jv-(c^\ti v)b_j$ implies $b_j\in\sum_j\ran A_j$, so
$$\left\{w(A)b_j\right\}_{w,j}\subseteq\sum_j\ran A_j\neq\kk^d$$
because $A_j$ are jointly nilpotent, which contradicts minimality.

If $\tilde{B_j}$ are jointly nilpotent, then $\sum_j\ran\tilde{B}_j\neq\kk^d/\cS$ and hence $\sum_j\ran B_j\neq\kk^d$. Since $A_j$ are jointly nilpotent, there exists $v\in\bigcap_j\ker A_j\setminus\{0\}$ and $c^\ti v\neq0$ because $\{w(A)^\ti c\}_w=\kk^d$. Therefore $B_jv=(c^\ti v)b_j$ implies
$$\left\{w(B)b_j\right\}_{w,j}\subseteq\sum_j\ran B_j\neq\kk^d,$$
which contradicts minimality.

By Lemma \ref{l:flip1} we have
$$\det f(\gX^{n})= \det L(\gX^{(n)})=\det L'(\gX^{(n)})\det L''(\gX^{(n)})$$
for all $n\in\N$. This factorization is non-trivial for large $n$ because the coefficients of $L'$ and $L''$ are not jointly nilpotent. Therefore $f$ is not an atom in $\px$ by Theorem \ref{t:irrpoly}(1). 

For a moment assume that $\cS$ is an irreducible invariant subspace for $B_j$ (that is, $\cS$ does not contain a smaller nonzero invariant subspace). If $f=f_\ell\cdots f_1$ is a factorization of $f$ into atoms, then it is a minimal factorization by Proposition \ref{p:polyfac}, so $f^{-1}=f_1^{-1}\cdots f_\ell^{-1}$ is also a minimal factorization. If $f_i^{-1}=1+c_i L_i^{-1}b_i$ is a minimal realization, then
\begin{equation}\label{e:big}
1+\begin{pmatrix}c_1 & c_2 & \cdots & c_\ell\end{pmatrix}
\begin{pmatrix}
L_1 & b_1c_2^\ti & \cdots & b_1c_\ell^\ti \\
& L_2 & \cdots & b_2c_\ell^\ti \\
& & \ddots & \vdots \\
& & & L_\ell
\end{pmatrix}^{-1}
\begin{pmatrix}b_1 \\ b_2 \\ \vdots \\ b_\ell\end{pmatrix}
\end{equation}
is a minimal realization of $f^{-1}$ by \eqref{e:minfac}. The block structure of \eqref{e:big} gives us a chain of invariant subspaces for $B_j$
$$\{0\}=\cV_0\subsetneq\cV_1\subsetneq\cdots\subsetneq\cV_\ell=\kk^d$$
such that $\cV_{i+1}/\cV_i$ are irreducible (for the action of linear maps on quotient spaces induced by $B_j$). We claim that after a basis change preserving the structure of \eqref{e:big} we can assume that $\cS=\cV_1$. Indeed, if $\cV_{i-1}\subsetneq \cS\subseteq \cV_i$, then $\cV_i=\cV_{i-1}\dotplus \cS$ and therefore $b_{i'}c_i^\ti=0$ for all $i'< i$; applying the basis change corresponding to switching $\cV_{i-1}$ and $\cS$ thus preserves the structure of \eqref{e:big} and results in replacing $\cV_1$ by $\cS$. Hence $\cS$ determines a minimal factorization $f^{-1}=f_1^{-1}h^{-1}$, where $f_1$ is an atom and $h$ is a nonconstant polynomial (since $f$ is not an atom). Due to the correspondence between minimal factorizations and pairs of invariant subspaces it follows that there exists an invariant subspace $\cS^\times$ for $B_j^\times=B_j-b_jc^\ti=A_j$ that is complementary to $\cS$.

Now let $\cS$ be an arbitrary non-trivial invariant subspace for $B_j$. Then we can find a chain of invariant subspaces
$$0=\cS_0\subsetneq \cS_1\subsetneq\cdots\subsetneq\cS_m=\cS$$
such that $\cS_{i+1}/\cS_i$ are irreducible. Inductively applying the reasoning from the previous paragraph we see that the sequence of (quotient) spaces $\cS_1,\cS_2/\cS_1,\dots,\cS_m/\cS_{m-1}$ yields a minimal factorization $f^{-1}=f_1^{-1}\cdots f_m^{-1}h^{-1}$. Hence $\cS$ determines a minimal factorization $f^{-1}=(f_m\cdots f_1)^{-1}h^{-1}$ and we obtain $\cS^\times$ as in the previous paragraph.
\qed

\begin{rem}
Observe that Theorem \ref{t:irrpoly}(1) (at least for scalar noncommutative polynomials) can be deduced from Theorem \ref{t:pert} without using Cohn's semifir factorization theory. Indeed, let $f\in\px$ satisfy $f(0)=1$ and let $f^{-1}=1+c^\ti L^{-1}b$ be a minimal FM-realization. By Lemma \ref{l:flip1} we have $\det f(\gX^{(n)})=\det L(\gX^{(n)})$, so if $\det f(\gX^{(n)})$ is not irreducible for large $n$, the monic pencil $L$ is not irreducible by Theorem \ref{t:irrpoly}. Therefore the coefficients of $L$ have a non-trivial invariant subspace $\cS$, so the assumptions of Theorem \ref{t:pert} are met and from it we obtain an invariant subspace $\cS^\times$ which yields a minimal factorization of $f$. By Proposition \ref{p:polyfac}, $f$ is not an atom, so Theorem \ref{t:irrpoly} holds.
\end{rem}

\subsection{A factorization result with missing variables}

Let $\uly=(y_1,\dots,y_h)$ be another tuple of freely noncommuting variables. For every $n\in\N$ and $1\le j\le h$ let $\gY_j^{(n)}=(\gy_{j\imath\jmath})_{\imath\jmath}$ be an $n\times n$ generic matrix. For every $f\in\pxy$ we have $\det f(\gX^{(n)},\gY^{(n)})\in \kk[\ulxi,\ulyi]$.

\begin{prop}\label{p:an}
Let $f\in\pxy$ and $f(0)=1$. If $\det f(\gX^{(n)},\gY^{(n)})$ is independent of $\gY^{(n)}$ for every $n\in\N$, then $f\in\px$.
\end{prop}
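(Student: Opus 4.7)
Plan: I would use noncommutative Fornasini--Marchesini (FM) realizations, after first reducing to the case that $f$ is an atom. Factor $f=f_1\cdots f_\ell$ into atoms in $\pxy$ normalized so each $f_i(0)=1$ (possible by rescaling, since $f(0)=1$). By Theorem~\ref{t:irrpoly}(1), each $\det f_i(\gX^{(n)},\gY^{(n)})$ is irreducible in $\kk[\ulxi,\ulyi]$ for large $n$; since their product $\det f(\gX^{(n)},\gY^{(n)})$ lies in $\kk[\ulxi]$ by hypothesis, and irreducibles of $\kk[\ulxi]$ remain irreducible in $\kk[\ulxi,\ulyi]$ by Gauss's lemma, unique factorization in the UFD $\kk[\ulxi,\ulyi]$ forces each $\det f_i(\gX^{(n)},\gY^{(n)})\in\kk[\ulxi]$. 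So I may assume $f$ is an atom.

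Next, let $f^{-1}=1+c^\ti L^{-1}b$ be a minimal FM-realization of size $d$ with $L=I-\sum_j A_jx_j-\sum_k B_ky_k$ and $b=\sum_j b_jx_j+\sum_k b_k'y_k$. By Lemma~\ref{l:flip1}, $L$ is flip-poly with $A_j=N_j+b_jc^\ti$ and $B_k=M_k+b_k'c^\ti$ for jointly nilpotent $(N_j,M_k)$, and $\det L(\gX^{(n)},\gY^{(n)})=\det f(\gX^{(n)},\gY^{(n)})\in\kk[\ulxi]$. I claim $L$ is irreducible: any nontrivial invariant subspace $\cS$ of its coefficients would, via Theorem~\ref{t:pert} applied to the non-degenerate right rank-one perturbation $(A_j,B_k)=(N_j,M_k)+(b_j,b_k')c^\ti$ of the jointly nilpotent tuple $(N_j,M_k)$ (non-degeneracy follows from observability and controllability of the minimal realization), produce a complementary $\cS^\times$ invariant under $(A_j^\times,B_k^\times)=(N_j,M_k)$; the pair $(\cS,\cS^\times)$ would correspond to a nontrivial minimal factorization of $f^{-1}$, which by Proposition~\ref{p:polyfac} translates to a nontrivial polynomial factorization of $f$, contradicting atomicity. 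Setting $L_{\ulx}=I-\sum_j A_jx_j$, the hypothesis combined with $\det L(\gX^{(n)},0)=\det L_{\ulx}(\gX^{(n)})$ gives $\det L(\gX^{(n)},\gY^{(n)})=\det L_{\ulx}(\gX^{(n)})$, which by Theorem~\ref{t:main} is irreducible in $\kk[\ulxi,\ulyi]$ for large $n$. Running through the block decomposition \eqref{e:05} of $L_{\ulx}$, this irreducibility forces $L_{\ulx}$ itself to be irreducible of size $d$: an irreducible sub-block $L_{\ulx}'$ of size $d'<d$ would, together with $L$, provide irreducible pencils of differing sizes but equal free loci, contradicting item~(2) of Section~\ref{s2}. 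Hence $L\sim L_{\ulx}$ by the same item, and matching coefficients forces all $B_k=0$.

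It remains to eliminate the $b_k'$. With $B_k=0$, the realization becomes $f^{-1}=g^{-1}+\sum_k\beta_k(\ulx)y_k$ where $g=f(\ulx,0)$ and $\beta_k=c^\ti L_{\ulx}^{-1}b_k'\in\rx$. For each $k_0$, setting $y_k=0$ for $k\ne k_0$ leaves a polynomial $\tilde f(\ulx,y_{k_0})\in\kk\langle\ulx,y_{k_0}\rangle$ with $\tilde f^{-1}=g^{-1}+\beta_{k_0}y_{k_0}$. Evaluating at an $n\times n$ tuple $(X,Y_{k_0})$ with $g(X)$ invertible, one has $\tilde f(X,Y_{k_0})=g(X)\bigl(I+g(X)\beta_{k_0}(X)Y_{k_0}\bigr)^{-1}$; polynomiality of $\tilde f$ in the entries of $Y_{k_0}$ then requires that $\det\bigl(I+g(X)\beta_{k_0}(X)Y_{k_0}\bigr)\equiv 1$, so its linear part $\tr\bigl(g(X)\beta_{k_0}(X)Y_{k_0}\bigr)$ vanishes for all $Y_{k_0}$, forcing $g(X)\beta_{k_0}(X)=0$ and hence $\beta_{k_0}=0$ as a NC rational function. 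Observability of the realization then yields $b_{k_0}'=0$. Therefore neither $L$ nor $b$ involves $\uly$, so $f^{-1}\in\rx$ and $f\in\px$. The main obstacle is establishing irreducibility of $L$ in paragraph two, which depends essentially on the nontrivial input of Theorem~\ref{t:pert}; the rational-to-polynomial argument in the last step is also somewhat delicate and relies on the trace being a nondegenerate linear form.
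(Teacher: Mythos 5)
Your proof is correct but follows a genuinely different route from the paper's. After the common reduction to an atom and passage to a minimal FM-realization of $f^{-1}$, the paper cites \cite[Proposition 3.3]{KV} to deduce directly from $\det L(\gX^{(n)},\gY^{(n)})=\det L(\gX^{(n)},0)$ that the $\uly$-coefficients generate a nilpotent ideal of the coefficient algebra, then uses Lemma~\ref{l:flip1} and Proposition~\ref{p:conv} to get irreducibility of $L$ and conclude those coefficients vanish because $\mat{d}$ is simple; you instead establish irreducibility of $L$ via Theorem~\ref{t:pert} (a more self-contained route, since the non-degeneracy of the rank-one perturbation is precisely observability and controllability of the minimal realization, and a nontrivial invariant subspace would hand you a nontrivial minimal, hence polynomial, factorization of $f$), and then derive $B_k=0$ by comparing $L$ with $L_{\ulx}$ through Theorem~\ref{t:main} and the uniqueness of irreducible pencils with a common free locus up to similarity. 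For the elimination of $b_k'$, the paper runs an induction on word length exploiting joint nilpotency of the coefficients of $L^\times$ (the base and inductive steps both reduce to the trace of a nilpotent matrix being zero), whereas you observe that $\det\bigl(I+g(X)\beta_{k_0}(X)Y_{k_0}\bigr)\equiv1$ forces the linear term $\tr\bigl(g(X)\beta_{k_0}(X)Y_{k_0}\bigr)$ to vanish identically, giving $\beta_{k_0}=0$, and then observability kills $b_{k_0}'$. Both arguments carry the same content. Two harmless slips: your UFD reduction to the atomic case yields the hypothesis for each $f_i$ only at large $n$, not every $n$, which is fine since the remainder of the argument only needs large $n$; and the displayed formula should read $\tilde f(X,Y_{k_0})=\bigl(I+g(X)\beta_{k_0}(X)Y_{k_0}\bigr)^{-1}g(X)$ (order matters), though the determinant you extract from it is unaffected.
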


\begin{proof}
Without loss of generality let $f$ be an atom. Let $f^{-1}=1+c^\ti L^{-1}b$ be a minimal FM-realization, where
$$b=\sum_{i=1}^g b_ix_i+\sum_{j=1}^h\tilde{b}_jy_j,\qquad L=I-\sum_{i=1}^gA_ix_i-\sum_{j=1}^h\tilde{A}_j y_j$$
for $c,b_i,\tilde{b}_j\in\kk^d$ and $A_i,\tilde{A}_j\in\mat{d}$. Since $f=1-c^\ti (L+bc^\ti)^{-1}b$, it suffices to show that $\tilde{A}_j=0$ and $\tilde{b}_j=0$ for $j=1,\dots,h$.

By the assumption we have
$$\det L(\gX^{(n)},\gY^{(n)})=\det f(\gX^{(n)},\gY^{(n)})
=\det f(\gX^{(n)},0)=\det L(\gX^{(n)},0)$$
for all $n\in\N$. By \cite[Proposition 3.3]{KV}, $\tilde{A}_1,\dots,\tilde{A}_h$ generate a nilpotent ideal in the $\kk$-algebra generated by $A_1,\dots,A_g,\tilde{A}_1,\dots,\tilde{A}_h$. Since $f$ is an atom, $L$ is irreducible by Lemma \ref{l:flip1} and Proposition \ref{p:conv}. Then $A_1,\dots,A_g,\tilde{A}_1,\dots,\tilde{A}_h$ generate $\mat{d}$, which is a simple algebra and therefore $\tilde{A}_j=0$ for all $j$.

To prove $\tilde{b}_j=0$ for a fixed $1\le j\le h$ it therefore suffices to show that $c^\ti w(A)\tilde{b}_j=0$ for all $w\in\mx$, which we prove by induction on $|w|$. Note that $$A_1-b_1c^\ti,\dots,A_g-b_gc^\ti,\tilde{b}_1c^\ti,\dots, \tilde{b}_h c^\ti$$
are jointly nilpotent because $1-c^\ti(L+bc^\ti)^{-1}b$ is a minimal realization of a noncommutative polynomial. Firstly, $c^\ti \tilde{b}_j=\tr(\tilde{b_j}c^\ti)=0$ because $\tilde{b_j}c^\ti$ is nilpotent. Now suppose that $c^\ti w(A)\tilde{b}_j=0$ holds for all $w$ with $|w|\le \ell$; then also $c^\ti w(A-bc^\ti)\tilde{b}_j=0$ for all $w$ with $|w|\le \ell$. Hence for every $i=1,\dots,g$ and $w\in\ell$ we have
\begin{align*}
c^\ti (x_iw)(A)\tilde{b}_j
& =c^\ti (A_i-b_ic^\ti)w(A)\tilde{b}_j \\
& =c^\ti (A_i-b_ic^\ti)w(A-bc^\ti)\tilde{b}_j \\
& =\tr\left((x_iw)(A-bc^\ti)\tilde{b}_jc^\ti\right) \\
& =0
\end{align*}
because $(x_iw)(A-bc^\ti)\tilde{b}_jc^\ti$ is nilpotent.
\end{proof}

\begin{cor}\label{c:an}
Let $f\in\pxy$ and $f(0)\neq0$. If for large $n$, the polynomial $\det f(\gX^{(n)},\gY^{(n)})$ has a factor independent of $\gY^{(n)}$, then $f$ has a factor in $\px$.
\end{cor}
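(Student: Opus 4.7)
The plan is to reduce Corollary~\ref{c:an} to Proposition~\ref{p:an} by passing to the atomic factorization of $f$. After scaling I may assume $f(0)=1$, and I factor $f=f_1\cdots f_\ell$ into atoms in $\pxy$, normalized so that each $f_i(0)=1$. Theorem~\ref{t:irrpoly}(1) applied in the alphabet $(\ulx,\uly)$ makes every $\det f_i(\gX^{(n)},\gY^{(n)})$ irreducible in $\kk[\ulxi,\ulyi]$ for all sufficiently large $n$, so
\[
\det f(\gX^{(n)},\gY^{(n)})=\prod_{i=1}^\ell \det f_i(\gX^{(n)},\gY^{(n)})
\]
is the factorization into irreducibles (up to scalars) for large $n$.

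Next I exploit unique factorization in $\kk[\ulxi,\ulyi]$: any non-constant factor of $\det f(\gX^{(n)},\gY^{(n)})$ that is independent of $\ulyi$ must, up to a scalar, be a product of some of the irreducible pieces $\det f_i(\gX^{(n)},\gY^{(n)})$, and because irreducible divisors of a $\ulyi$-independent polynomial are themselves $\ulyi$-independent, each such piece is individually independent of $\ulyi$. The hypothesis supplies such a factor for every sufficiently large $n$, so pigeonhole over $\{1,\dots,\ell\}$ yields an index $i^*$ and an infinite set $S\subseteq\N$ with $\det f_{i^*}(\gX^{(n)},\gY^{(n)})$ independent of $\gY^{(n)}$ for all $n\in S$. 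If I could then invoke Proposition~\ref{p:an} directly on $f_{i^*}$ I would obtain $f_{i^*}\in\px$, and the corresponding factor of $f$ would lie in $\px$ as required.

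The main obstacle is that Proposition~\ref{p:an} demands $\ulyi$-independence for \emph{every} $n$, whereas my pigeonhole step only gives it for $n\in S$. To bridge this gap I would pass to a minimal FM-realization $f_{i^*}^{-1}=1+c^\ti L^{-1}b$ with $L=I-\sum_iA_ix_i-\sum_j\tilde A_jy_j$: by Lemma~\ref{l:flip1}, $\det f_{i^*}(\gX^{(n)},\gY^{(n)})=\det L(\gX^{(n)},\gY^{(n)})$ for every $n$, and the independence of this determinant from $\gY^{(n)}$ is equivalent, via \cite[Proposition~3.3]{KV}, to the purely algebraic condition that $\tilde A_1,\dots,\tilde A_h$ generate a nilpotent ideal in the $\kk$-algebra generated by all coefficients of $L$. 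This condition is intrinsic to $L$ and does not depend on $n$, so verifying it at a single $n\in S$ exceeding the relevant threshold forces it at every $n$. The argument then concludes as in the proof of Proposition~\ref{p:an}: since $f_{i^*}$ is an atom, Lemma~\ref{l:sa} and Proposition~\ref{p:conv} render $L$ irreducible, so the algebra it generates is simple and the nilpotent ideal must vanish, forcing $\tilde A_j=0$. The joint nilpotency of $A_j-b_jc^\ti$ and $\tilde b_jc^\ti$ built into the realization of a polynomial then forces $\tilde b_j=0$ by the trace-nilpotency induction used in the last paragraph of the proof of Proposition~\ref{p:an}, so $f_{i^*}\in\px$.
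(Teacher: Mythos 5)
Your overall route is the paper's: factor $f$ into atoms, make each $\det f_i(\gX^{(n)},\gY^{(n)})$ irreducible via Theorem~\ref{t:irrpoly}(1), use unique factorization in $\kk[\ulxi,\ulyi]$ to peel off some $\gY$-independent atomic determinant, and then feed the corresponding atom into Proposition~\ref{p:an}. You have also correctly spotted the one non-trivial point that the paper glosses over as ``immediate'': Proposition~\ref{p:an} asks for $\gY^{(n)}$-independence at \emph{every} $n$, while the UFD argument a priori gives only, for each large $n$, \emph{some} index depending on $n$, and pigeonhole only upgrades this to an infinite set $S$.

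Where you and the paper part ways is in how to close that gap. You re-open the FM-realization of $f_{i^*}^{-1}$ and invoke the nilpotent-ideal characterization from \cite[Proposition~3.3]{KV} to argue that the condition is intrinsic to the pencil and hence carries from a single large $n\in S$ to all $n$. This works provided one reads that proposition as a fixed-$n$ equivalence for $n$ past a threshold (which is how the paper uses it in Proposition~\ref{p:conv}(2)), but it is heavier than necessary and, as you note, ends up re-running most of the proof of Proposition~\ref{p:an} rather than quoting it. The lighter fix, which makes even the pigeonhole unnecessary, is a monotonicity observation: since $f_i(X\oplus 0,\,Y\oplus 0)=f_i(X,Y)\oplus I$ and $f_i(0)=1$, independence of $\det f_i(\gX^{(n)},\gY^{(n)})$ from $\gY^{(n)}$ at level $n$ automatically implies it at every level $m\le n$. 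Thus the index sets $T_n=\{\,i:\det f_i(\gX^{(n)},\gY^{(n)})\text{ is }\gY\text{-independent}\,\}$ are nested and nonempty for large $n$, so already $\bigcap_n T_n\ne\emptyset$; any $i^*$ in this intersection satisfies the hypothesis of Proposition~\ref{p:an} verbatim, and the corollary follows. This is presumably what the authors meant by ``immediate consequence.''
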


\begin{proof}
Immediate consequence of Theorem \ref{t:irrpoly} and Proposition \ref{p:an}.
\end{proof}

\section{Algorithms}

In this section we present algorithms based off the results presented above. In Subsection \ref{ss64} we present a simple algorithm for comparing free loci, and in Subsection 
\ref{ssec:factor} we give an algorithm for factorization of noncommutative polynomials.

\subsection{Comparing polynomial free loci}\label{ss64}

While Theorem \ref{t:irrpoly} characterizes the equality of free loci, stable associativity seems evasive to check directly. Thus we now describe a procedure that is more practical for checking equality $\fl(f_1)=\fl(f_2)$ for  noncommutative polynomials $f_1$ and $f_2$.

Let $f_1,f_2\in\px$ be such that $f_i(0)\neq0$. Then we can test for $\fl(f_1)=\fl(f_2)$ as follows. First we compute minimal FM-realizations for $f_1^{-1}$ and $f_2^{-1}$, which can be effectively done using algorithms based on linear algebra \cite{BGM}. Let $L_1$ and $L_2$ be the monic pencils appearing in these realizations. Since $\fl(f_1)=\fl(f_2)$ is equivalent to $\fl(L_1)=\fl(L_2)$, it then suffices to compare irreducible blocks in the invariant subspace decomposition for the coefficients of $L_1$ and $L_2$, which can be done using probabilistic algorithms with polynomial complexity \cite{Ebe,CIW}.

\begin{exa}
Let
\begin{align*}
f_1 &= 1+x_1+x_2+x_1^2x_2, \\
f_2 &= 1+x_1+x_2+x_1x_2x_1, \\
f_3 &= 1+x_1+x_2+x_2x_1^2.
\end{align*}
Since every affine linear polynomial clearly has an FM-realization of size 1 (note that $b$ in \eqref{e:fm} is linear), we can use the constructions of FM-realizations associated with the sum, product and inverse \cite[Section 4]{BGM} to build FM-realizations for $f_i^{-1}$ of size $5$. After applying the minimization algorithm we obtain minimal realizations $f_i^{-1}=1+c^\ti L_i^{-1}b_i$, where $c^\ti=(1,0,0)$ and
\begin{alignat*}{3}
L_1 &=\begin{pmatrix}
1+x_1+x_2 & x_1 & 0 \\
0 & 1 & -x_1 \\
-x_2 & 0 & 1
\end{pmatrix},\qquad
b_1 &= \begin{pmatrix}-x_1 - x_2 \\ 0 \\ x_2\end{pmatrix}, \\
L_2 &=\begin{pmatrix}
1+x_1+x_2 & x_1 & 0 \\
0 & 1 & -x_2 \\
-x_1 & 0 & 1
\end{pmatrix},\qquad
b_2 &= \begin{pmatrix}-x_1 - x_2 \\ 0 \\ x_1\end{pmatrix}, \\
L_3 &=\begin{pmatrix}
1+x_1+x_2 & x_2 & 0 \\
0 & 1 & -x_1 \\
-x_1 & 0 & 1
\end{pmatrix},\qquad
b_3 &= \begin{pmatrix}-x_1 - x_2 \\ 0 \\ x_1\end{pmatrix}.
\end{alignat*}
It is easy to check that the coefficients of $L_1$ generate $\mat{3}$, so $L_1$ is irreducible. Next we consider two homogeneous linear systems $PL_1=L_2P$ and $PL_1=L_3P$, where $P$ is a $3\times3$ matrix of scalar indeterminates. While the second system admits only the trivial solution $P=0$, the first system has a one-dimensional solution space which intersects $\GL_3(\kk)$. Therefore $L_1$ and $L_3$ are similar but $L_2$ is not similar to $L_1$. Consequently,
$$\fl(f_1)=\fl(f_3)\neq\fl(f_2).$$
Alternatively, one can use a computer algebra system to check $\fl_n(f_1)\setminus \fl_n(f_2)\neq\emptyset$ for a fixed $n$. For example, we have
$$\left(
\begin{pmatrix}1 & -1 \\ -1 & 0\end{pmatrix},\begin{pmatrix}1 & 1 \\ 1 & 0\end{pmatrix}
\right)\in \fl_2(f_1)\setminus\fl_2(f_2).$$
\end{exa}

\subsection{Factorization via state space realizations}\label{ssec:factor}

We now describe an algorithm for factorization of noncommutative polynomials based on their FM-realizations (cf. \cite{Scr}). As with comparing free loci, the only computational expenses of the algorithm arise from construction and minimization of FM-realizations, and finding an invariant subspace of a tuple of matrices.

Given $f\in\px$ with $f(0)=1$ we first find a minimal realization $f^{-1}=1+c^\ti L^{-1}b$; note that $\det f(\gX^{(n)})=\det L(\gX^{(n)})$ by Lemma \ref{l:flip1}.

\begin{enumerate}
\item If the coefficients of $L$ do not admit a non-trivial invariant subspace, $L$ is an irreducible pencil, so $\det f(\gX^{(n)})=\det L(\gX^{(n)})$ is irreducible for large $n$ and hence $f$ is an atom.

\item If the coefficients of $L$ admit a non-trivial invariant subspace, then we find an irreducible one, $\cS$. With respect to it we have
\begin{equation}\label{e:70}
f^{-1}=1+
\begin{pmatrix}c_1^\ti & c_2^\ti \end{pmatrix}
\begin{pmatrix}
L_1 & \star \\ 0 & L_2
\end{pmatrix}^{-1}
\begin{pmatrix}b_1 \\ b_2 \end{pmatrix}
\end{equation}
for monic pencils $L_1$ and $L_2$. Following the proof of Theorem \ref{t:pert} (Subsection \ref{sss:pfPert}), their coefficients are not jointly nilpotent, so $f$ factors. Moreover, since $\cS$ is irreducible, the same reasoning as in Subsection \ref{sss:pfPert} implies that 
the realization \eqref{e:70} necessarily yields a minimal factorization $f^{-1}=f_1^{-1}f_2^{-1}$ and from \eqref{e:minfac} we read off $f_i^{-1}=1+c_i^\ti L_i^{-1}b_i$. Therefore $f=f_2f_1$ is a polynomial factorization by Proposition \ref{p:polyfac} and $f_1$ is an atom.
\end{enumerate}

Hence we obtain a factorization of $f$ into atomic factors by repeating (2) until (1) applies. Note that the obtained factorization depends on the choice of irreducible invariant subspaces in (2).

\begin{exa}
Let
$$f=1+\tfrac32 x_1+\tfrac12 x_2+\tfrac12(x_1^2+x_1x_2+x_2x_1)+\tfrac12 x_1x_2x_1.$$
Then $f^{-1}$ admits a minimal realization
\begin{equation}\label{e:71}
1+\begin{pmatrix}1 & 0 & 0 \end{pmatrix}
\begin{pmatrix}
1+\tfrac32 x_1+\tfrac12 x_2 & x_1 & -x_1+\tfrac12 x_2 \\
-\tfrac32 x_1-\tfrac12 x_2 & 1 & -\tfrac12 x_2 \\
-x_1 & 0 & 1
\end{pmatrix}^{-1}
\begin{pmatrix}-\tfrac32 x_1-\tfrac12 x_2 \\ \tfrac32 x_1+\tfrac12 x_2 \\ x_1 \end{pmatrix}.
\end{equation}
Let $L$ be the monic pencil in \eqref{e:71}. The coefficients of $L$ have a common eigenvector $(-1,\tfrac32,1)^\ti$ and one can check that (2) yields
$$f= (1+\tfrac12 x_1+\tfrac12 x_2+\tfrac12 x_1x_2)(1+x_1).$$
However, the linear span of $(1,0,1)^\ti$ and $(-1,1,0)^\ti$ is also an irreducible invariant subspace for the coefficients of $L$, and in this case (2) results in
$$f=(1+x_1)(1+\tfrac12 x_1+\tfrac12 x_2+\tfrac12 x_2x_1).$$
Let $L^\times$ be the monic pencil appearing in the inverse of the realization \eqref{e:71}. Its coefficients generate the algebra of strictly upper triangular $3\times 3$ matrices, so they admit exactly two non-trivial invariant subspaces, namely $\spa\{(1,0,0)^\ti\}$ and $\spa\{(1,0,0)^\ti,(0,1,0)^\ti\}$. From the perspective of Theorem \ref{t:pert}, the first one is complementary to $\spa\{(-1,\tfrac32,1)^\ti\}$ and the second one is complementary to $\spa\{(1,0,1)^\ti,(-1,1,0)^\ti\}$.

\end{exa}

\section{Smooth points on a free locus}\label{ss51}

Let $L$ be a monic pencil. In this section we study the relation between smooth points of the free locus $\fl(L)$ and one-dimensional kernels of evaluations of the pencil $L$. Let us define
\begin{alignat*}{3}
\fl^1(L) &=\bigcup_{n\in\N}\fl^1_n(L),\qquad
&\fl^1_n(L) &=\left\{X\in\fl_n(L)\colon \dim\ker L(X)=1\right\}, \\
\fl^\one(L) &=\bigcup_{n\in\N}\fl^\one_n(L),\qquad
&\fl^\one_n(L) &=\left\{X\in\fl_n(L)\colon \dim\ker L(X)^2=1\right\}.
\end{alignat*}
That is, $X\in \fl^1(L)$ if the geometric multiplicity of the zero eigenvalue in $L(X)$ is $1$ and $X\in \fl^\one(L)$ if the algebraic multiplicity of the zero eigenvalue in $L(X)$ is $1$. Note that $\fl^\one_n(L)\subseteq\fl^1_n(L)$ are Zariski open subsets of $\fl_n(L)$ for every $n\in\N$. However, the set $\fl^1_n(L)$ can be empty for a fixed $n$ even if $L$ is an irreducible pencil; see Laffey's counterexample to Kippenhahn's conjecture \cite{Laf}.

Recall that $E_{\imath\jmath}\in\mat{n}$ denote the standard matrix units.

\begin{lem}\label{l:jac}
Let $L=I-\sum_jA_jx_j$ and $X\in\fl_n(L)$.
\begin{enumerate}
	\item $X\notin \fl^1_n(L)$ if and only if $\adj L(X)=0$.
	\item If $\det L(\gX^{(n)})$ is a minimum degree defining polynomial for $\fl_n(L)$, then $\fl_n(L)$ is singular at $X$ if and only if $\tr(\adj L(X)(A_j\otimes E_{\imath\jmath}))=0$
	for all $1\le j\le g$ and $1\le \imath,\jmath\le n$.
	\item $X\notin \fl^\one_n(L)$ if and only if $\tr(\adj L(X)(\sum_j A_j\otimes X_j))=0$.
\end{enumerate}
\end{lem}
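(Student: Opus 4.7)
The plan is to dispatch the three parts via classical facts about the adjugate, Jacobi's formula, and the Jordan form.

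For part (1), I would invoke the rank law for the adjugate: for any $N \times N$ matrix $M$, one has $\adj M = 0$ iff $\rk M \le N - 2$, while $\rk M = N - 1$ yields $\rk \adj M = 1$. Since $X \in \fl_n(L)$ forces $\rk L(X) \le dn - 1$, the vanishing of $\adj L(X)$ is equivalent to $\dim \ker L(X) \ge 2$, which is the desired equivalence.

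For part (2), the first observation is that the minimum degree hypothesis forces $\det L(\gX^{(n)})$ to be square-free (any defining polynomial of a hypersurface having the same degree as its radical generator must coincide with it up to a scalar). The Jacobian criterion then identifies the singular points of $\fl_n(L)$ with the common zeros of the partial derivatives of $\det L(\gX^{(n)})$. I would combine this with Jacobi's formula for the derivative of a determinant and the identity $\partial L(\gX^{(n)})/\partial \gx_{j\imath\jmath} = -A_j \otimes E_{\imath\jmath}$ to obtain
$$\frac{\partial}{\partial \gx_{j\imath\jmath}} \det L(\gX^{(n)}) = -\tr\bigl(\adj L(\gX^{(n)}) \cdot (A_j \otimes E_{\imath\jmath})\bigr),$$
and evaluate at $X$ to conclude.

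For part (3), the plan is first to write $\sum_j A_j \otimes X_j = I - L(X)$ and use $\adj L(X)\cdot L(X) = \det L(X)\cdot I = 0$ to reduce the condition to $\tr \adj L(X) = 0$. A short Jordan block count then shows that $\dim \ker L(X)^2 \ge 2$ is equivalent to the algebraic multiplicity of $0$ as an eigenvalue of $L(X)$ being at least $2$. On the other hand, conjugating $L(X)$ to an upper triangular Jordan form with diagonal $\lambda_1, \dots, \lambda_{dn}$ and using that the adjugate intertwines with conjugation, I would compute
$$\tr \adj L(X) = \sum_{i=1}^{dn}\prod_{j \ne i} \lambda_j = e_{dn-1}(\lambda_1, \dots, \lambda_{dn}),$$
which vanishes precisely when at least two of the $\lambda_i$ equal $0$.

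None of the ingredients is deep. The only subtle point is the transition in (2) from the minimum degree hypothesis to square-freeness and hence to the Jacobian characterization of singular points; once that is in place, the rest of the proof amounts to bookkeeping around Jacobi's formula and the explicit adjugate-trace computation in Jordan form for (3).
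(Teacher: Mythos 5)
Your proposal is correct. Parts (1) and (2) take essentially the same route as the paper: for (1), both rely on the rank law for the adjugate; for (2), both use the Jacobian criterion together with Jacobi's formula to express $\nabla \det L(\gX^{(n)})$ in terms of adjugate–traces, with the minimum-degree hypothesis supplying square-freeness (the paper handles this through Remark \ref{r:rad}, you argue it directly — same content).

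For part (3), however, you take a genuinely different path. The paper introduces the auxiliary one-variable polynomial $p_X(t) = \det L(tX)$, applies Jacobi's formula to get $p_X'(t) = -\tr\bigl(\adj L(tX)\bigl(\sum_j A_j\otimes X_j\bigr)\bigr)$, and observes that $X\notin\fl_n^{\one}(L)$ exactly when $p_X'(1)=0$, since the order of vanishing of $p_X$ at $t=1$ is the algebraic multiplicity of $0$ as an eigenvalue of $L(X)$. You instead observe that $\sum_j A_j\otimes X_j = I - L(X)$ and $\adj L(X)\cdot L(X)=0$, reducing the condition to $\tr\adj L(X)=0$, and then compute $\tr\adj L(X) = e_{dn-1}(\lambda_1,\dots,\lambda_{dn})$ in Jordan form — which vanishes, given that one $\lambda_i$ is already $0$, precisely when a second one is. Both arguments are correct and elementary; the paper's has the small aesthetic merit of making (2) and (3) two instances of the same differentiation, while yours is more algebraically self-contained (no calculus beyond part (2), just the symmetric-function identity for $\tr\adj$). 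Your Jordan-block count for the equivalence $\dim\ker L(X)^2\ge 2 \Leftrightarrow$ algebraic multiplicity $\ge 2$ is also right.
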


\begin{proof}
Firstly, (1) is clear by the definition of the adjugate. Denote $f=\det L(\gX^{(n)})$ and $p_X=\det L(tX)$ for $X\in\mat{n}^g$. Then Jacobi's formula \cite[Theorem 8.3.1]{MN} for the derivative of a determinant implies
\begin{align*}
	\nabla f &= -\bigg(\tr\left(\adj L\left(\gX^{(n)}\right)(A_j\otimes E_{\imath\jmath})\right)\bigg)_{j,\imath,\jmath}, \\
	\frac{{\rm d} p_X}{{\rm d} t} &= -\tr\left(\adj L(tX)\left( \sum_j A_j\otimes X_j \right) \right).
\end{align*}
Now (2) and (3) follow because $\fl_n(L)$ is singular at $X$ if and only if $(\nabla f)(X)=0$, and $X\notin \fl^\one_n(L)$ if and only if $\frac{{\rm d} p_X}{{\rm d} t}(1)=0$.
\end{proof}

\begin{thm}\label{t:fl-min}
If $L$ is an FL-minimal pencil, then
$$\emptyset\neq\fl^\one_n(L)\subseteq \left\{\text{smooth points of } \fl_n(L) \right\} \subseteq \fl^1_n(L)$$
for large $n$.
\end{thm}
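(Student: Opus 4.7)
I would tackle the statement in three stages: first the two set inclusions by direct application of Lemma~\ref{l:jac}, then reduce non-emptiness to the case of an irreducible pencil using the block form~\eqref{e:05}, and finally handle the irreducible case by a unique-factorization argument applied to the characteristic polynomial of $L(\gX^{(n)})$.

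\textbf{Inclusions.} Since $L$ is FL-minimal, Remark~\ref{r:rad} guarantees that for large $n$ the polynomial $\det L(\gX^{(n)})$ is square-free and is a minimum-degree defining polynomial of $\fl_n(L)$; in particular the smoothness criterion Lemma~\ref{l:jac}(2) is in force. If $X\in\fl^\one_n(L)$, then Lemma~\ref{l:jac}(3) gives $\tr(\adj L(X)\cdot\sum_j A_j\otimes X_j)\neq 0$; expanding each $X_j=\sum_{\imath,\jmath}(X_j)_{\imath\jmath}E_{\imath\jmath}$ yields some index $(j,\imath,\jmath)$ with $\tr(\adj L(X)(A_j\otimes E_{\imath\jmath}))\neq 0$, whence Lemma~\ref{l:jac}(2) declares $X$ smooth. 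Conversely a smooth point outside $\fl^1_n(L)$ would satisfy $\adj L(X)=0$ by Lemma~\ref{l:jac}(1), killing every partial derivative and contradicting smoothness.

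\textbf{Reduction to the irreducible case.} In the block form~\eqref{e:05}, FL-minimality forbids identity blocks and forces the diagonal blocks $L_1,\dots,L_\ell$ to be pairwise non-similar irreducible pencils; Corollary~\ref{c:comp} then gives the decomposition $\fl_n(L)=\bigcup_i\fl_n(L_i)$ into distinct irreducible hypersurfaces for large $n$. Because $L(X)$ is block upper triangular with diagonal blocks $L_i(X)$, the algebraic multiplicity of $0$ in $L(X)$ equals the sum of the algebraic multiplicities of $0$ in the $L_i(X)$. Hence any point in the non-empty Zariski open subset $\fl^\one_n(L_{i_0})\setminus\bigcup_{i\neq i_0}\fl_n(L_i)$ of $\fl_n(L_{i_0})$ automatically lies in $\fl^\one_n(L)$, since total algebraic multiplicity $1$ forces $\dim\ker L(X)^2=1$. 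It thus suffices to prove $\fl^\one_n(L')\neq\emptyset$ for any irreducible pencil $L'$ and large $n$.

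\textbf{The irreducible case.} Consider
\[
\chi(t):=\det\bigl(tI_{dn}-L'(\gX^{(n)})\bigr)\in\kk[\gX^{(n)}][t],
\]
monic of degree $dn$ in $t$ with $\chi(0)=(-1)^{dn}\det L'(\gX^{(n)})$. Factor $\chi=\prod_i\psi_i^{m_i}$ into irreducibles in $\kk[\gX^{(n)}][t]$, taking $\psi_i$ monic in $t$ via Gauss's lemma. Theorem~\ref{t:main} says $\det L'(\gX^{(n)})$ is irreducible in the UFD $\kk[\gX^{(n)}]$ for large $n$, hence prime. Unique factorization applied to $\prod_i\psi_i(0)^{m_i}=\pm\det L'(\gX^{(n)})$ then forces exactly one $\psi_{i_0}$ with $\psi_{i_0}(0)$ associate to $\det L'(\gX^{(n)})$ and with multiplicity $m_{i_0}=1$, while every other $\psi_i(0)$ is a unit in $\kk^\times$. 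Since $\mathrm{char}\,\kk=0$, the irreducible $\psi_{i_0}\in\kk(\gX^{(n)})[t]$ is separable, so its $t$-discriminant is a non-zero element of $\kk(\gX^{(n)})$; on a Zariski-dense open subset of $\fl_n(L')$ it specializes to a non-zero scalar, making $\psi_{i_0}(t,X)$ have distinct roots with $0$ among them (since $\psi_{i_0}(0,X)=0$). The remaining factors $\psi_i(t,X)$ do not vanish at $t=0$, so $m_{i_0}=1$ yields $0$ as a simple root of $\chi(t,X)$, i.e., $X\in\fl^\one_n(L')$.

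\textbf{Main obstacle.} The decisive step is forcing $m_{i_0}=1$, which rests squarely on Theorem~\ref{t:main}: without the irreducibility (equivalently, primality) of $\det L'(\gX^{(n)})$ in the UFD $\kk[\gX^{(n)}]$, the unique-factorization argument yields no control on the multiplicity. Once $m_{i_0}=1$ is secured, separability in characteristic zero and a generic specialization take care of the remainder automatically.
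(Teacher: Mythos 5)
Your two inclusions and the reduction to the irreducible case are both sound. The reduction is a genuine variation on the paper, which instead handles an FL-minimal pencil directly via Remark~\ref{r:rad} and a degree computation without splitting off a single irreducible diagonal block; both routes work, and yours cleanly isolates the core difficulty.

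The gap is in the irreducible case, at the sentence ``its $t$-discriminant is a non-zero element of $\kk(\gX^{(n)})$; on a Zariski-dense open subset of $\fl_n(L')$ it specializes to a non-zero scalar.'' Non-vanishing of the discriminant $\Delta\in\kk[\ulxi]$ only tells you that $\{\Delta\neq 0\}$ is dense in the ambient affine space $\mat{n}^g$; it says nothing about its restriction to the proper subvariety $\fl_n(L')$. The discriminant could well be divisible by $\det L'(\gX^{(n)})$, in which case it vanishes identically on the hypersurface and your specialization argument collapses. This is not a hypothetical obstruction: $\psi=t^2+\omega t+\omega$ is irreducible over $\kk[\omega]$ by Eisenstein at $\omega$, has $\psi(0)=\omega$, yet its discriminant $\omega^2-4\omega$ is divisible by $\omega$, and indeed $\psi(t,0)=t^2$ has $t=0$ as a double root — exactly the failure you must exclude. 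So separability of $\psi_{i_0}$ over $\kk(\gX^{(n)})$ does not by itself give you $t=0$ as a simple root at generic points of $\fl_n(L')$; you would have to show that $\Delta$ (equivalently, $\partial_t\psi_{i_0}(0,\gX^{(n)})$, equivalently $\tr(\adj L'(\gX^{(n)}))$) is not a multiple of $\det L'(\gX^{(n)})$. That is precisely the content the paper supplies: using Jacobi's formula it identifies $\frac{\rm d}{{\rm d}t}\det L(t\gX^{(n)})$ with $-\tr(\adj L(t\gX^{(n)})(\sum_j A_j\otimes\gX_j^{(n)}))$, shows this trace polynomial has the same degree as $\det L(\gX^{(n)})$ but constant term $0$ rather than $1$, and concludes it cannot be a (scalar) multiple of the square-free minimum-degree defining polynomial $\det L(\gX^{(n)})$, invoking Theorem~\ref{t:main} and Remark~\ref{r:rad}. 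Without some analogue of that degree-and-constant-term step your argument does not close.
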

\begin{proof}
By Remark \ref{r:rad}, $\det L(\gX^{(n)})$ is a minimum degree defining polynomial for $\fl_n(L)$ for large $n$. Hence the inclusions hold by Lemma \ref{l:jac}. Note that
$$\frac{{\rm d}}{{\rm d} t} \det L(t\gX^{(n)})=-\tr \left(
\adj L(t\gX^{(n)})\left(\sum_j A_j\otimes \gX_j^{(n)} \right)
\right)$$
and hence
\begin{align*}
	\deg \det L(\gX^{(n)})
	&= \deg_t \det L(t\gX^{(n)}) \\
	&=1+ \deg_t \frac{{\rm d}}{{\rm d} t} \det L(t\gX^{(n)}) \\
	&=1+\deg_t \tr \left(
	\adj L(t\gX^{(n)})\left(\sum_j A_j\otimes \gX_j^{(n)} \right)
	\right) \\
	&= \deg \tr \left(
	\adj L(\gX^{(n)})\left(\sum_j A_j\otimes \gX_j^{(n)} \right)
	\right).
\end{align*}
Note that the constant terms of $\tr(\adj L(\gX^{(n)})(\sum_j A_j\otimes \gX_j^{(n)}))$ and $\det L(\gX^{(n)})$ equal 0 and 1, respectively. Since these two polynomials have the same degree, we conclude that $\tr(\adj L(\gX^{(n)})(\sum_j A_j\otimes \gX_j^{(n)}))$ is not a multiple of $\det L(\gX^{(n)})$. Therefore $\fl^\one_n(L)\neq\emptyset$ for large $n$ by Theorem \ref{t:main} and Lemma \ref{l:jac}(2).
\end{proof}

\begin{exa}
Let $L=I-A_1x_1-A_2x_2$ be as in Example \ref{ex0} and
$$X=\left( \begin{pmatrix}1 & 0 \\ 0 & 1 \end{pmatrix},\begin{pmatrix}0 & 0 \\ -1 & -1 \end{pmatrix}\right),
\qquad
Y=\left( \begin{pmatrix}2 & \frac12 \\ 0 & 1 \end{pmatrix},\begin{pmatrix}0 & 0 \\ 2 & 1 \end{pmatrix}\right).$$
One can check that $\fl_2(L)$ is singular at $X\in\fl^1_2(L)$ and smooth at $Y\in\fl_2(L)\setminus\fl^\one_2(L)$. Hence the inclusions in Proposition \ref{p:sing} are strict in general.
\end{exa}

The next proposition describes the behavior of smooth points when moving between different levels of a free locus.

\begin{prop}\label{p:sing}
Let $L$ be a monic pencil, $X\in\fl(L)$ and $Y\in\mat{n}^g$.
\begin{enumerate}
	\item If $Y\in\fl(L)$, then $\fl(L)$ is singular at $X\oplus Y$.
	\item If $Y\notin\fl(L)$ and $n$ is large enough, then $\fl(L)$ is smooth at $X\oplus Y$ if and only if $\fl(L)$ is smooth at $X$.
\end{enumerate}
\end{prop}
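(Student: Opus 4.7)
I would first replace $L$ by an FL-minimal pencil with the same free locus; this preserves every $\fl_k(L)$ as a variety (hence its smooth/singular locus), and by Remark \ref{r:rad} makes $\det L(\gX^{(k)})$ square-free for all sufficiently large $k$, which in turn activates the gradient criterion of Lemma \ref{l:jac}(2) in that regime. Writing $m=|X|$, $n=|Y|$, and using the decomposition $\C^{m+n}=\C^m\oplus\C^n$, I would view $L(Z)$ for $Z\in\mat{m+n}^g$ as a $2{\times}2$ block operator on $(\C^d\otimes\C^m)\oplus(\C^d\otimes\C^n)$ with diagonal blocks $L(\pi_1(Z))$ and $L(\pi_2(Z))$ (where $\pi_1,\pi_2$ project $Z$ onto its diagonal blocks in $\mat{m}^g$ and $\mat{n}^g$) and off-diagonal blocks linear in $Z_{12}$ and $Z_{21}$.

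For part (2), since $L(Y)$ is invertible the $(2,2)$-block stays invertible on a neighbourhood $U$ of $X\oplus Y$, and Schur complement yields
\[
\det L(Z)\;=\;\det L(\pi_2(Z))\cdot\det S(Z),\qquad S(Z)\;=\;L(\pi_1(Z))\,-\,L(Z)_{12}\,L(\pi_2(Z))^{-1}\,L(Z)_{21},
\]
with $\det L(\pi_2(Z))$ a unit on $U$. The decisive observation is that the correction $L(Z)_{12}L(\pi_2(Z))^{-1}L(Z)_{21}$ is bilinear in $(Z_{12},Z_{21})$, so its differential at $X\oplus Y$ vanishes and $dS(X\oplus Y)$ only sees the $\pi_1$-direction. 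Jacobi's formula combined with $\det S(X\oplus Y)=\det L(X)=0$ then gives the clean identity
\[
d\det L(X\oplus Y)\cdot\delta\;=\;\det L(Y)\cdot d\det L(X)\cdot\pi_1(\delta),
\]
so $\nabla\det L(X\oplus Y)=0$ iff $\nabla\det L(X)=0$ (each side understood at its own level). For $n$ large so that both $m+n$ and $m$ are in the square-free regime of Remark \ref{r:rad}---the latter handled, if $m$ is small, by an auxiliary direct summand with $W\notin\fl(L)$ of large size---this gradient equivalence promotes to the smoothness equivalence via Lemma \ref{l:jac}(2).

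For part (1), I would use the block-diagonal adjugate formula. Under the identification $L(X\oplus Y)\cong L(X)\oplus L(Y)$,
\[
\adj L(X\oplus Y)\;\cong\;\det L(Y)\,\adj L(X)\,\oplus\,\det L(X)\,\adj L(Y)\;=\;0,
\]
because both $\det L(X)$ and $\det L(Y)$ vanish. By the trace expression in Lemma \ref{l:jac}(2), every entry of $\nabla\det L(\gX^{(m+n)})$ at $X\oplus Y$ is zero, which gives singularity directly when $m+n$ is large. For arbitrary $m+n$ I would pick $W\in\mat{N}^g\setminus\fl(L)$ with $N$ large, note that the same adjugate vanishing persists at $(X\oplus Y)\oplus W$ in $\fl_{m+n+N}(L)$ (yielding singularity at that augmented point), and then invoke part (2) with $X\oplus Y$ playing the role of $X$ and $W$ playing the role of $Y$ to transport singularity back to $X\oplus Y$.

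The crux---and the step I expect to be the main technical obstacle---is the Schur-complement/Jacobi computation in (2); what makes it work is precisely the second-order vanishing of the off-diagonal correction, which collapses both gradients onto the single $\pi_1$-projection. The remaining bookkeeping (FL-minimal reduction, making the gradient criterion available at the smaller level $m$ via an auxiliary large $W\notin\fl(L)$, and the adjugate computation in (1)) is then essentially routine.
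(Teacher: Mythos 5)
Your argument is correct in substance and follows essentially the same route as the paper: both reduce to the derivative criterion of Lemma~\ref{l:jac}. In part (2) the paper simply records that $\adj(M_1\oplus M_2)=(\det M_2\,\adj M_1)\oplus(\det M_1\,\adj M_2)$, so $\adj L(X\oplus Y)=(\det L(Y)\,\adj L(X))\oplus 0$, and then quotes Lemma~\ref{l:jac}(2). Your Schur-complement computation, with the observation that the off-diagonal correction in $S(Z)$ is bilinear in $(Z_{12},Z_{21})$ and so has vanishing differential at $X\oplus Y$, produces exactly the same gradient identity
$d\det L(\gX^{(m+n)})|_{X\oplus Y}[\delta]=\det L(Y)\cdot d\det L(\gX^{(m)})|_X[\pi_1(\delta)]$; it is correct, just a longer derivation of the same fact. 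For part (1), the paper's one line is: $\dim\ker L(X\oplus Y)\geq 2$, so $X\oplus Y\notin\fl^1(L)$, so $X\oplus Y$ is not smooth by Theorem~\ref{t:fl-min}. Your adjugate-vanishing argument is the same observation seen through Lemma~\ref{l:jac}(1), again with more motion.

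One step of yours is not sound and should be removed: the ``auxiliary direct summand $W\notin\fl(L)$ of large size'' patch you use to cover small base level $m$ (both inside part (2) and in the transport step of part (1)) is circular. Appending $W$ is precisely the instance of part (2), with $Y:=W$, that you are in the middle of proving, and it only produces another equivalence of the form ``$\nabla\det L(\gX^{(m+N)})|_{X\oplus W}=0\iff\nabla\det L(\gX^{(m)})|_X=0$'' without ever tying $\nabla\det L(\gX^{(m)})|_X=0$ to actual variety singularity at $X$. What your computation does prove, for all sizes, is the gradient equivalence $\nabla\det L(\gX^{(m+n)})|_{X\oplus Y}=0\iff\nabla\det L(\gX^{(m)})|_X=0$; upgrading this to the smoothness statement uses Lemma~\ref{l:jac}(2) at level $m$, whose hypothesis (that $\det L(\gX^{(m)})$ is a minimum degree defining polynomial) Remark~\ref{r:rad} only guarantees for large $m$. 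The paper's own proof invokes Lemma~\ref{l:jac}(2) at the level of $X$ without commenting on this either, so this is a fine point shared with the original rather than a defect unique to your write-up; nevertheless, the $W$-trick does not repair it and should be dropped rather than presented as a fix.
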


\begin{proof}
Since the statement is about the free locus and not $L$ directly, we can assume that $L$ is FL-minimal.

(1) If $X,Y\in\fl(L)$, then $X\oplus Y\notin \fl^1(L)$ and hence $X\oplus Y$ is not a smooth point of $\fl(L)$ by Theorem \ref{t:fl-min}.

(2) Observe that $\adj(M_1\oplus M_2)=(\det M_1 \adj M_2)\oplus (\det M_2\adj M_2)$ for arbitrary $M_1,M_2$. Hence
$$\adj L(X\oplus Y)= (\det L(Y)\adj L(X))\oplus 0$$
and the equivalence follows by Lemma \ref{l:jac}(2).
\end{proof}

The quasi-affine variety $\fl_n^1(L)$ comes equipped with a natural line bundle: to each $X\in\fl^1(L)$ we assign the line $\ker L(X)$. Define $\pi:\kk^{dn}\to\kk^d$ by setting $\pi(v)=u_1$ for $v=\sum_{i=1}^n u_i\otimes e_i\in \kk^d\otimes \kk^n$. For a monic pencil $L$ define
$$\hair(L)=\bigcup_{X\in \fl^1(L)}\pi\left( \ker L(X) \right)\subseteq \kk^d.$$
Since $\fl^1_n(L)$ is closed under $\gl{n}$-conjugation, we have
$$\hair(L)=\bigcup_{X\in \fl^1(L)}\left\{u_i\in\kk^d\colon \sum_i u_i\otimes e_i\in\ker L(X)\right\}.$$

\begin{prop}\label{p:hairspan}
If $L$ is irreducible of size $d$, then $\spa \hair(L)=\kk^d$.
\end{prop}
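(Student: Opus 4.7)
The plan is to show that $V := \spa \hair(L) \subseteq \kk^d$ is a nonzero subspace invariant under each $A_j$. Since $L$ is irreducible, the algebra generated by $A_1,\ldots,A_g$ is all of $\mat{d}$, and $\kk^d$ is a simple $\mat{d}$-module, so any nonzero invariant subspace must equal $\kk^d$.

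Nontriviality of $V$: An irreducible pencil is automatically FL-minimal (it is of the form \eqref{e:05} with $\ell=1$), so Theorem~\ref{t:fl-min} yields $\fl^1_n(L)\neq\emptyset$ for large $n$. A nonzero kernel vector of $L(X)$ has at least one nonzero block, and applying a permutation from $\gl{n}$ that moves that block to the first slot produces a nonzero element of $\hair(L)$.

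Invariance $A_{j_0}V \subseteq V$: fix $j_0\in\{1,\dots,g\}$ and $u\in\hair(L)$. Using the $\gl{n}$-conjugation symmetry noted right before the statement, we may assume $u=u_1$ for some $X\in\fl^1_n(L)$ and $v=\sum_i u_i\otimes e_i \in \ker L(X)$. The key construction is the enlargement $X'\in\mat{n+1}^g$ given by
\[
X'_j = \begin{pmatrix} X_j & 0 \\ \delta_{jj_0}\,e_1^T & 0 \end{pmatrix},
\]
so $X'_j$ agrees with the block-diagonal extension of $X$ except that $X'_{j_0}$ has an extra $1$ in position $(n+1,1)$. Using $X'_j e_{n+1}=0$ and block-triangularity, one computes directly that
\[
\ker L(X') = \bigl\{\,\xi + (A_{j_0}\,w_1(\xi))\otimes e_{n+1}\,:\,\xi\in\ker L(X)\,\bigr\},
\]
where $w_1(\xi)$ denotes the first block of $\xi$. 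In particular this kernel is one-dimensional, so $X'\in\fl^1_{n+1}(L)$, and the $(n+1)$-th block of its kernel vector equals $A_{j_0}u_1 = A_{j_0}u$. Hence $A_{j_0}u \in \hair(L) \subseteq V$. Since $u$ and $j_0$ were arbitrary, $V$ is invariant under every $A_{j_0}$, and therefore under the algebra they generate.

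The main creative input is the choice of the extension $X\mapsto X'$: it is the minimal modification that simultaneously preserves one-dimensionality of the kernel and forces $A_{j_0}u$ to appear as a new block of a kernel vector, thereby transferring multiplication by $A_{j_0}$ into the operation ``append a block to a hair''. Once this construction is set up, irreducibility of $L$ (equivalently, simplicity of $\kk^d$ as a $\kk\langle A_1,\dots,A_g\rangle$-module) combined with the nontriviality step immediately yields $V=\kk^d$.
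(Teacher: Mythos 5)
Your proof is correct, and it takes a genuinely different route from the paper's. The paper argues by contradiction: letting $P$ be a projection onto $\spa\hair(L)$, it forms the truncated pencil $L'=I-\sum_j A_jPx_j$, observes that every $X\in\fl^1(L)$ lies in $\fl(L')$ because kernel vectors satisfy $v=(P\otimes I)v$, then invokes the density of $\fl^1_n(L)$ in $\fl_n(L)$ (Theorem~\ref{t:fl-min}) to get $\fl(L)\subseteq\fl(L')$, and finally applies \cite[Theorem 3.6]{KV} to produce a surjective algebra homomorphism $\cA'\to\mat{d}$ from the properly smaller algebra generated by the $A_jP$, a contradiction. You instead prove the statement directly by showing $V=\spa\hair(L)$ is a nonzero $A_j$-invariant subspace: the $(n{+}1)$-block extension $X'_j=\left(\begin{smallmatrix} X_j & 0\\ \delta_{jj_0}e_1^\ti & 0\end{smallmatrix}\right)$ preserves one-dimensionality of the kernel and appends the block $A_{j_0}u_1$ to the kernel vector (a computation I verified), so $A_{j_0}\hair(L)\subseteq\hair(L)$; simplicity of $\kk^d$ as a $\mat{d}$-module then finishes. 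Your argument is more elementary: it avoids the free-locus inclusion theorem \cite[Theorem 3.6]{KV} entirely and uses Theorem~\ref{t:fl-min} only for nonemptiness of $\fl^1_n(L)$ rather than for density of smooth points, at the cost of the (elegant, but purpose-built) extension trick. The paper's version buys a shorter write-up by reusing the locus-inclusion machinery already developed in \cite{KV}.
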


\begin{proof}
Let $L=I-\sum_jA_jx_j$ be irreducible and suppose $\spa \hair(L)\neq\kk^d$. Let $P\in \mat{d}$ be a projection onto $\spa \hair(L)$ and $L'=I-\sum_jA_jPx_j$. Let $X\in \fl^1(L)$ be arbitrary. If $L(X)v=0$, then $v=(P\otimes I)v$ and
$$L'(X)v=\left(I-\sum_jA_jP\otimes X_j\right)v=v-\sum_j\left(A_jP\otimes X_j\right)v=v-\sum_j\left(A_j\otimes X_j\right)v=0,$$
so $X\in \fl(L')$. Since $\fl_n^1(L)$ contains all the smooth points of $\fl_n(L)$ for large $n\in\N$ by Theorem \ref{t:fl-min} an is therefore dense in $\fl_n(L)$, we conclude that $\fl(L)\subseteq\fl(L')$. By \cite[Theorem 3.6]{KV} there exists a surjective homomorphism $\cA'\to \mat{d}$ given by $A_jP\mapsto A_j$, where $\cA'$ is the $\kk$-algebra generated by $A_1P,\dots,A_gP$. But $\cA'\subsetneq \mat{d}$, a contradiction.
\end{proof}

The above results can be also applied to free loci of more general matrices of noncommutative polynomials. For example, we obtain the following.

\begin{cor}\label{c:hair}
Let $f\in\opm_d(\px)$ be an atom with $f(\kk^g)\cap \GL_d(\kk)\neq\emptyset$. Then there exists $X\in\fl(f)$ such that $\dim\ker f(X)=1$.
\end{cor}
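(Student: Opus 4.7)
The plan is to bootstrap from the pencil result Theorem \ref{t:fl-min} by replacing $f$ with a stably associated irreducible monic pencil, for which we already have a point with one-dimensional kernel.

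First, by Remark \ref{r:shift}, I would reduce to the case $f(0)=I_d$: choose $\alpha\in\kk^g$ with $f(\alpha)\in\GL_d(\kk)$, replace $f$ by $\tilde f(\ulx):=f(\alpha)^{-1}f(\ulx+\alpha)$, and note that atomicity is preserved (stable associativity is an equivalence relation compatible with these operations), and that if $Y\in\fl(\tilde f)$ with $\dim\ker \tilde f(Y)=1$ then $X:=Y+(\alpha_1 I_n,\dots,\alpha_g I_n)$ satisfies $f(X)=(f(\alpha)\otimes I_n)\tilde f(Y)$, hence $\dim\ker f(X)=1$ as well.

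Next, since $f$ is an atom with $f(0)=I_d$, Lemma \ref{l:sa} furnishes an irreducible monic pencil $L$ of some size $d'$, integers $e_1,e_2$ with $d+e_1=d'+e_2$, and matrices $P,Q\in\GL_{d+e_1}(\px)$ such that
\[
f\oplus I_{e_1}=P\,(L\oplus I_{e_2})\,Q.
\]
Because $\px$ is a domain whose units are exactly $\kk^*$, the determinants $\det P,\det Q$ lie in $\kk^*$, so $P(X)$ and $Q(X)$ are invertible matrices for every tuple $X$. Consequently, for every $X$,
\[
\dim\ker (f\oplus I_{e_1})(X)=\dim\ker (L\oplus I_{e_2})(X)=\dim\ker L(X).
\]

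Now I invoke Theorem \ref{t:fl-min}: an irreducible pencil is FL-minimal, so there exist $n\in\N$ and $X\in\fl^1_n(L)$, i.e.\ $\dim\ker L(X)=1$. Combining with the identity above gives $\dim\ker (f\oplus I_{e_1})(X)=1$, and since $(f\oplus I_{e_1})(X)=f(X)\oplus I_{e_1 n}$, this kernel is precisely $\ker f(X)\oplus 0$. Hence $\dim\ker f(X)=1$ and in particular $X\in\fl(f)$, which is exactly the desired conclusion. There is no real obstacle here beyond bookkeeping: the only subtle point is observing that units of $\GL_k(\px)$ have scalar determinant, which is what lets us transport one-dimensional kernels through stable associativity cleanly.
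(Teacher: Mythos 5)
Your proposal is correct and follows essentially the same route as the paper: reduce to $f(0)=I$ via Remark \ref{r:shift}, pass to a stably associated irreducible monic pencil $L$ via Lemma \ref{l:sa}, obtain $X$ with $\dim\ker L(X)=1$ from Theorem \ref{t:fl-min}, and transport the kernel dimension back through the invertible matrices $P,Q$. One small rough edge: there is no determinant for matrices over the noncommutative ring $\px$, so the phrasing ``$\det P,\det Q\in\kk^*$'' is not quite meaningful; what you actually need is just that $P\in\GL_{d+e_1}(\px)$ implies $P(X)$ is invertible for every matrix point $X$, which is immediate because evaluation $\px\to\mat{n}$ is a ring homomorphism and $P^{-1}(X)$ is the inverse.
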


\begin{proof}
By Remark \ref{r:shift} we can assume $f(0)=I$. By Lemma \ref{l:sa}, $f$ is stably associated to an irreducible monic pencil $L$, so there exists $X\in\fl(L)$ satisfying $\dim\ker L(X)=1$ by Theorem \ref{t:fl-min}. By the definition of stable associativity we then have $\dim\ker f(X)=1$.
\end{proof}

\section{Applications to real algebraic geometry}\label{ss52}

In this section we present two applications
of our results to real and convex algebraic geometry.
In Corollary \ref{c:pd-min}
we prove a density result for points $X$ on the 
boundary of a free spectrahedron determined
by a  hermitian pencil $L$, where the kernel of $L(X)$ is one-dimensional. As a consequence we obtain Corollary \ref{c:rand}, which improves upon the main result of \cite{HKN}.

\subsection{Boundaries of free spectrahedra}

Let $\herm{n}$ denote the $\R$-space of $n\times n$ hermitian matrices. A monic pencil $L=I-\sum_jA_jx_j$ is {\bf hermitian} if $A_j\in\herm{n}$ for $1\le j \le g$. Its {\bf free spectrahedron} (also called {\bf LMI domain}) \cite{HKM} is the set
$$\cD(L)=\bigcup_{n\in\N}\cD_n(L),\qquad \cD_n(L)=\left\{X\in\herm{n}^g\colon L(X)\succeq0\right\}.$$
Also denote
\begin{alignat*}{3}
\flh(L)&=\bigcup_{n\in\N}\flh_n(L),\qquad &\flh_n(L)&=\fl_n(L)\cap\herm{n}^g, \\
\partial\cD(L)&=\bigcup_{n\in\N}\partial\cD_n(L),\qquad &\partial\cD_n(L)&=\cD_n(L)\cap\flh_n(L), \\
\partial^1\cD(L)&=\bigcup_{n\in\N}\partial^1\cD_n(L),\qquad &\partial^1\cD_n(L)&=\partial\cD_n(L)\cap\fl_n^1(L).
\end{alignat*}
The set $\flh(L)$ is the {\bf free real locus} of $L$. A hermitian monic pencil $L$ is {\bf LMI-minimal} if it is of minimal size among all hermitian pencils $L'$ satisfying $\cD(L')=\cD(L)$. Note that if $L_1$ and $L_2$ are hermitian pencils, then $\fl(L_1)=\fl(L_2)$ implies $\cD(L_1)=\cD(L_2)$. Using Burnside's theorem and the hermitian structure of an LMI-minimal $L$ it is then easy to deduce that $L$ is unitarily equivalent to $L_1\oplus\cdots\oplus L_\ell$, where $L_k$ are pairwise non-similar irreducible hermitian pencils. In particular, every LMI-minimal pencil is also FL-minimal.

\begin{rem}\label{r:xxs}
Instead of hermitian monic pencils in hermitian variables $\ulx$ as above, one can also consider hermitian monic pencils in non-hermitian variables $\ulx$ and $\ulx^*$, i.e., pencils of the form $L=I-\sum_jA_jx_j-\sum_jA_j^*x_j^*$ for $A_j\in\matc{d}$, with evaluations
$$L(X)=I-\sum_jA_j\otimes X_j-\sum_jA_j^*\otimes X_j^*$$
for $X\in\matc{n}^g$. However, by introducing new hermitian variables $y_j=\frac12(x_j+x_j^*)$ and $z_j=\frac{1}{2i}(x_j-x_j^*)$ we observe that results about free real loci and LMI domains of pencils in hermitian variables, for instance those in \cite{KV}, readily translate into results about free real loci and LMI domains of pencils in non-hermitian variables. More concretely, let
$$L'=I-\sum_j(A_j+A_j^*)y_j-\sum_j i(A_j-A_j^*)z_j.$$
Involution-free properties are the same for $L$ and $L'$; for example, $L$ is irreducible if and only if $L'$ is irreducible, and $L(\gX^{(n)}+i\gY^{(n)},\gX^{(n)}-i\gY^{(n)})$ is irreducible if and only if $L'(\gX^{(n)},\gY^{(n)})$ is irreducible. Likewise, topological relations (e.g., density) among the sets
\begin{equation}\label{e:ser1}
\partial^1\cD(L)\subset\partial\cD(L)\subset\flh(L)\subset \fl(L)
\end{equation}
are the same as those among
\begin{equation}\label{e:ser2}
\partial^1\cD(L')\subset\partial\cD(L')\subset\flh(L')\subset \fl(L')
\end{equation}
because one passes between \eqref{e:ser1} and \eqref{e:ser2} via $\R$-linear transformations. Using these two observations it becomes clear that the following results (Lemma \ref{l:smooth}, Proposition \ref{p:pd-min}, and Corollaries \ref{c:pd-min} and \ref{c:rand}) also hold in the $(\ulx,\ulx^*)$ setup.
\end{rem}

\begin{lem}\label{l:smooth}
Let $L$ be an LMI-minimal hermitian pencil. Then $\partial^1\cD_n(L)$ are precisely the smooth points of $\partial\cD_n(L)$ for large $n$. 
\end{lem}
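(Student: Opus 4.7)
The plan is to use that an LMI-minimal pencil is in particular FL-minimal. By Remark \ref{r:rad}, $\det L(\gX^{(n)})$ is then a minimum-degree defining polynomial for $\fl_n(L)$ for all sufficiently large $n$, so Lemma \ref{l:jac} applies and Theorem \ref{t:fl-min} yields that every smooth point of $\fl_n(L)$ lies in $\fl^1_n(L)$. With this setup in place, the two inclusions are handled separately.

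For $\partial^1\cD_n(L)\subseteq\{\text{smooth points of }\partial\cD_n(L)\}$, fix $X\in\partial^1\cD_n(L)$ with $\ker L(X)=\C v$. Since $0$ is a simple eigenvalue of $L(X)\succeq 0$, analytic eigenvalue perturbation theory provides a smooth real-valued function $\lambda$ on a neighborhood $U$ of $X$ with $\lambda(X')=\lambda_{\min}(L(X'))$ and $\partial\cD_n(L)\cap U=\lambda^{-1}(0)$; it thus suffices to show $d\lambda(X)\neq 0$. Writing $v=\sum_\imath u_\imath\otimes e_\imath$, positive semidefiniteness of $L(X)$ together with the one-dimensional kernel gives $\adj L(X)=\gamma vv^*$ for some $\gamma>0$, so by Lemma \ref{l:jac}(2) the gradient of $\det L(\gX^{(n)})$ at $X$ equals $\gamma(u_\imath^* A_j u_\jmath)_{j,\imath,\jmath}$. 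The crucial step is the non-vanishing of this tuple: if every $u_\imath^* A_j u_\jmath$ were zero, then extracting the $\imath$-th $\C^d$-component of $L(X)v=0$ would give $u_\imath=\sum_{j,\jmath}(X_j)_{\imath\jmath}A_j u_\jmath$, and pairing with $u_{\imath'}$ would force $\langle u_{\imath'},u_\imath\rangle=\sum_{j,\jmath}(X_j)_{\imath\jmath}u_{\imath'}^* A_j u_\jmath=0$ for all $\imath,\imath'$, whence $v=0$, a contradiction. The same pairing computation (now over hermitian perturbations $Y$) then shows $d\lambda(X)\neq 0$.

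For $\{\text{smooth points of }\partial\cD_n(L)\}\subseteq\partial^1\cD_n(L)$, suppose $X\in\partial\cD_n(L)$ is a smooth point while $\dim\ker L(X)\geq 2$. Lemma \ref{l:jac}(1) gives $\adj L(X)=0$, so by Jacobi's formula the complex gradient of $\det L(\gX^{(n)})$ at $X$ vanishes; a linear change of coordinates between the entries $\gx_{j\imath\jmath}$ and the real coordinates on $\herm{n}^g$ shows the real gradient of the restriction of $\det L(\gX^{(n)})$ to $\herm{n}^g$ also vanishes, so $X$ is a singular point of the real hypersurface $\flh_n(L)$. But smoothness of $\partial\cD_n(L)$ at $X$ provides a local smooth real $(gn^2-1)$-dimensional submanifold of $\herm{n}^g$ sitting inside $\flh_n(L)$; since $\flh_n(L)$ is a real algebraic hypersurface of the same dimension, the two sets agree on a neighborhood of $X$, which would force $X$ to be a smooth point of $\flh_n(L)$, a contradiction.

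The main obstacle is verifying non-vanishing of the tuple $(u_\imath^* A_j u_\jmath)_{j,\imath,\jmath}$ in the forward direction: the argument depends essentially on hermiticity of $L$ (which makes $\adj L(X)$ a positive scalar multiple of the rank-one projector $vv^*$) combined with the linear identity between $v$ and the $A_j$ supplied by $L(X)v=0$.
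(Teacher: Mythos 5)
Your proof takes a substantially different route from the paper's: the paper simply notes that $\det L(\gX^{(n)})$ is square-free (Remark \ref{r:rad}), that its homogenization is a hyperbolic polynomial (because $L$ is monic and hermitian), and then cites Renegar's result \cite[Lemma 7]{Ren} that for a square-free hyperbolic polynomial the smooth boundary points of the hyperbolicity cone are precisely those of multiplicity one. You instead try to prove both inclusions by hand. Your forward inclusion is essentially correct: the calculation showing that $\adj L(X)=\gamma vv^*$ with $\gamma>0$, that the resulting tuple $(u_\imath^* A_j u_\jmath)$ is not identically zero (the pairing against $u_{\imath'}$ forcing $v=0$ is a nice trick), and that this produces a nonzero derivative of $\lambda_{\min}$ in some hermitian direction (e.g., $Y_j=B_j$ with $(B_j)_{\imath\jmath}=u_\imath^*A_ju_\jmath$, which is hermitian because $A_j$ is), can all be filled in.

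The backward inclusion, however, has a genuine gap. The step ``since $\flh_n(L)$ is a real algebraic hypersurface of the same dimension, the two sets agree on a neighborhood of $X$'' is not justified: near a point with $\dim\ker L(X)\ge 2$, the real hypersurface $\flh_n(L)$ may well consist of several sheets, only one of which is $\partial\cD_n(L)$, so containment of a local $(gn^2-1)$-manifold inside $\flh_n(L)$ does not yield local equality. And even granting the local equality, the concluding inference that $X$ ``would be a smooth point of $\flh_n(L)$'' silently identifies topological smoothness of the real zero set with non-vanishing of the gradient of the (minimum-degree, square-free) defining polynomial. Over $\C$ this is fine for reduced hypersurfaces, but over $\R$ a real zero set can be a smooth manifold at a point where the gradient of the real defining polynomial vanishes, so the contradiction does not follow. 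Ruling out exactly this phenomenon is where the hyperbolicity of $\det L(\gX^{(n)})$ becomes essential --- it is the content of Renegar's Lemma 7 --- and your argument never uses it in the backward direction. To repair your proof you would either need to reintroduce hyperbolicity (e.g., by examining the roots of the univariate function $t\mapsto\det L(X+tY)$ for $Y$ pointing into the interior of $\cD_n(L)$, which must all be real and hit $t=0$ with total multiplicity $\ge 2$, producing a corner or cusp rather than a $C^1$ boundary) or simply cite Renegar as the paper does.
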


\begin{proof}
The polynomial $\det L(\gX^{(n)})$ is square-free for large $n$ by Remark \ref{r:rad}. Let us consider $\matc{n}^g=\herm{n}^g+i \herm{n}^g$ as the decomposition of the affine space $\matc{n}^g$ into its real and imaginary part. Since $L$ is hermitian, $\det L(\gX^{(n)})$ is a complex analytic polynomial with real coefficients. Let $F$ be the homogenization of $\det L(\gX^{(n)})$. Viewed as a real polynomial, $F$ is hyperbolic with respect to the direction of the homogenizing variable because $L$ is a monic and hermitian. Since $\det L(\gX^{(n)})$ is square-free, $F$ is also square-free. Thus it follows by \cite[Lemma 7]{Ren} that $\partial^1\cD_n(L)$ are precisely the smooth points of $\partial\cD_n(L)$ for large $n$. 
\end{proof}

Given an LMI-minimal pencil $L$, smooth points of the boundary of the free spectrahedron of $L$ are (at least for large sizes) characterized as the points where the kernel of $L$ attains minimal dimension. On the other hand, the points where $L$ has maximal kernel are (Euclidean) extreme points of the spectrahedron \cite{RG,EHKM,DDOSS,ANT,Kri}.

\begin{prop}\label{p:pd-min}
Let $L$ be an LMI-minimal hermitian pencil. Then $\partial\cD_n(L)$ is Zariski dense in $\fl_n(L)$ for large $n$.
\end{prop}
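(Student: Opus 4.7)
The plan is to reduce to the irreducible case via LMI-minimality and then to exhibit, inside each irreducible complex component of $\fl_n(L)$, a totally real real-analytic submanifold of real dimension equal to the complex dimension that is contained in $\partial\cD_n(L)$. Up to unitary equivalence, $L=L_1\oplus\cdots\oplus L_\ell$ with the $L_k$ pairwise non-similar irreducible hermitian pencils; each $L_k$ is itself LMI-minimal. By Corollary~\ref{c:comp}, for large $n$ the decomposition $\fl_n(L)=\bigcup_{k=1}^\ell\fl_n(L_k)$ is into distinct irreducible hypersurfaces, so it suffices to show that $\partial\cD_n(L)\cap \fl_n(L_k)$ is Zariski dense in $\fl_n(L_k)$ for each $k$.

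Fix $k$. Since $\cD\bigl(\bigoplus_{k'\neq k}L_{k'}\bigr)\neq\cD(L)$ by LMI-minimality, for $n$ large enough some $X_0\in\herm{n}^g$ satisfies $L_{k'}(X_0)\succ 0$ for every $k'\neq k$ while $L_k(X_0)\not\succeq 0$. Convexity of the open spectrahedra $\cD_n(L_{k'})^\circ$ (each containing $0$ and $X_0$) yields $L_{k'}(tX_0)\succ 0$ for all $t\in[0,1]$, while $L_k(tX_0)$ acquires a zero eigenvalue at some $t_*\in(0,1)$. Hence the open subset
\[
W:=\{X\in\partial\cD_n(L_k)\colon L_{k'}(X)\succ 0 \text{ for every }k'\neq k\}
\]
of $\partial\cD_n(L_k)$ is non-empty, and clearly $W\subseteq \partial\cD_n(L)$. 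The proper Zariski-closed subset of $\partial\cD_n(L_k)$ on which $L_k$ has a multiple zero eigenvalue is nowhere dense, so $W\cap\partial^1\cD_n(L_k)\neq\emptyset$. Fix $X^{**}$ in this intersection. Lemma~\ref{l:smooth} applied to the LMI-minimal $L_k$ says $X^{**}$ is a smooth point of $\partial\cD_n(L_k)$, so a small real-analytic neighbourhood $U\subset W\subset \partial\cD_n(L)$ of $X^{**}$ is a real submanifold of $\herm{n}^g$ of real dimension $gn^2-1$. For every $X\in U$ the hermitian matrix $L_k(X)$ has a simple zero eigenvalue, so $U\subset \fl^{\one}_n(L_k)$, and Theorem~\ref{t:fl-min} places $U$ inside the smooth locus of the irreducible complex hypersurface $\fl_n(L_k)\subset \mat{n}^g$. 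Because $\herm{n}^g$ is a totally real real form of $\mat{n}^g$, the submanifold $U$ is totally real with real dimension equal to $\dim_{\C}\fl_n(L_k)$.

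It remains to invoke the standard density principle for totally real submanifolds. Choose a real basis of $\herm{n}^g$ and extend it to a complex basis of $\mat{n}^g$; in such coordinates $f=\det L_k(\gX^{(n)})$ has real coefficients (since it is real-valued on $\herm{n}^g$) and is complex irreducible by Theorem~\ref{t:main}, hence also real irreducible. For any $p=p_1+ip_2\in\C[\ulxi]$ vanishing on $U$, the real polynomials $p_1,p_2$ vanish on the totally real real-analytic smooth submanifold $U\subset \fl_n(L_k)$. Local holomorphic extension (in holomorphic coordinates on $\fl_n(L_k)$, $U$ becomes an open subset of $\R^{N-1}\subset\C^{N-1}$, on which a holomorphic function vanishes only if it is identically zero) shows that $p_1,p_2$ vanish on an analytic neighbourhood of $U$ inside $\fl_n(L_k)$ and hence on all of $\fl_n(L_k)$ by complex irreducibility; equivalently $f\mid p_j$ in $\R[\ulxi]$ for $j=1,2$, so $f\mid p$ in $\C[\ulxi]$ and $p$ vanishes on $\fl_n(L_k)$. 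Thus $\partial\cD_n(L)\cap \fl_n(L_k)$ is Zariski dense in $\fl_n(L_k)$ for each $k$, which proves the proposition. The main obstacles are the LMI-minimality step, which is needed to reach a point of $\partial\cD_n(L_k)$ with all other $L_{k'}$ strictly positive definite, and the real-to-complex density step, which must compensate for the fact that $\flh_n(L_k)$ has strictly smaller real dimension than $\fl_n(L_k)$.
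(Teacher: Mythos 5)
Your proposal reaches the same conclusion but by a genuinely different route. The paper never locates a smooth point of $\partial\cD_n(L_k)$: it observes that the semialgebraic set $\flh_n(L_k)\cap\cC$ has real dimension $gn^2-1$ (Bochnak--Coste--Roy, Theorem 2.8.8), concludes via BCR Proposition 2.8.2 that its complex Zariski closure is a hypersurface, and hence must equal the irreducible hypersurface $\fl_n(L_k)$. You instead locate a smooth boundary point $X^{**}$, obtain a real-analytic totally real $(gn^2-1)$-submanifold $U$ inside the smooth locus of $\fl_n(L_k)$, and invoke the holomorphic identity principle for maximal totally real submanifolds. The complex-analytic route is valid and perhaps more geometrically transparent, but it commits you to finding a smooth point, which the paper's dimension count deliberately sidesteps.

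That commitment is where your proof has a gap. The assertion ``the proper Zariski-closed subset of $\partial\cD_n(L_k)$ on which $L_k$ has a multiple zero eigenvalue is nowhere dense, so $W\cap\partial^1\cD_n(L_k)\neq\emptyset$'' is stated without justification, and it is not automatic: $\partial\cD_n(L_k)$ is only a closed semialgebraic subset of $\flh_n(L_k)$, not an irreducible real algebraic set, so a proper subvariety of the ambient hypersurface need not be Euclidean-nowhere-dense in $\partial\cD_n(L_k)$. The nonemptiness of $\partial^1\cD_n(L_k)$ is precisely the subtle point of this section --- Laffey's counterexample to Kippenhahn's conjecture shows it can fail at a fixed level $n$ --- and neither Theorem \ref{t:fl-min} (which gives $\fl^\one_n\neq\emptyset$ in $\mat{n}^g$, not in the positive-semidefinite real boundary) nor Lemma \ref{l:smooth} (which identifies $\partial^1\cD_n$ with smooth boundary points but does not produce any) supplies it on its own. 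You can close the gap by noting that the topological boundary of a convex body with nonempty interior is locally a Lipschitz graph, hence differentiable almost everywhere (Rademacher), so smooth boundary points are dense; combined with Lemma \ref{l:smooth} this yields density of $\partial^1\cD_n(L_k)$ in $\partial\cD_n(L_k)$ for large $n$, and your argument then goes through. Alternatively, replicating the paper's BCR dimension count on $\flh_n(L_k)\cap\cC$ fills the same hole.
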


\begin{proof}
Let $L=L_1\oplus\cdots\oplus L_\ell$, where $L_k$ are pairwise non-similar irreducible hermitian pencils. Fix $1\le k\le \ell$. By the minimality of $L$ we have
\begin{equation}\label{e:31}
	\bigcap_{k'\neq k}\cD(L_{k'})\not\subseteq \cD(L_k).
\end{equation}
Since
$$\bigcup_{k=1}^\ell(\partial\cD(L_k)\cap \partial\cD(L))\subseteq \partial\cD(L)\subseteq \fl(L)=\bigcup_{k=1}^\ell\fl(L_k),$$
it suffices to prove that $\partial\cD_n(L_k)\cap \partial\cD_n(L)$ is Zariski dense in $\fl_n(L_k)$ for large $n$.

Since the LMI domain of a hermitian monic pencil is a convex set with nonempty interior, then by \eqref{e:31} for large $n$ there exists $X_0\in\herm{n}^g$ such that $L_k(X_0)\not\succeq0$ and $L_{k'}(X_0)\succ0$ for $k'\neq k$. Since this is an open condition in Euclidean topology, there exists $\ve>0$ such that for every $X\in B(X_0,\ve)$ we have $L_k(X)\not\succeq0$ and $L_{k'}(X)\succ0$ for $k'\neq k$, where $B(X_0,\ve)\subset\herm{n}^g$ is the closed ball about $X_0$ with radius $\ve$ in Euclidean norm. Let $\cC$ be the convex hull of the origin and $B(X_0,\ve)$. By convexity we have $\cC\subset \bigcap_{k'\neq k}\cD(L_{k'})$ and thus
$$\flh_n(L_k)\cap\cC\subseteq\partial\cD_n(L_k)\cap \partial\cD_n(L).$$
Observe that for every $X\in B(X_0,\ve)$ there exists $t\in(0,1)$ such that $\det L_k(tX)=0$ by the choice of $X_0$ and $\ve$. Therefore $\flh_n(L_k)\cap\cC\subseteq \herm{n}^g$ is a semialgebraic set of (real) dimension $gn^2-1$ by \cite[Theorem 2.8.8]{BCR}. Therefore its Zariski closure in $\matc{n}^g=\herm{n}^g+i \herm{n}^g$ is a hypersurface by \cite[Proposition 2.8.2]{BCR}. Since the latter is contained in $\fl_n(L_k)$, which is an irreducible hypersurface for large $n$, we conclude that $\flh_n(L_k)\cap\cC$ is Zariski dense in $\fl_n(L_k)$ and therefore $\partial\cD_n(L_k)\cap \partial\cD_n(L)$ is Zariski dense in $\fl_n(L_k)$ for large $n$.
\end{proof}

\begin{rem}\label{r:euc}
The essence of the last proof is that $\partial^1\cD_n(L)$ has a nonempty interior with respect to the Euclidean topology on $\flh_n(L)$ for large $n$ if $L$ is an irreducible hermitian pencil.
\end{rem}

The following statement is a spectrahedral version of the quantum Kippenhahn conjecture (cf. \cite[Corollary 5.7]{KV}).

\begin{cor}\label{c:pd-min}
Let $L$ be an LMI-minimal hermitian pencil. Then $\partial^1\cD_n(L)$ is Zariski dense in $\fl_n(L)$ for large $n$.
\end{cor}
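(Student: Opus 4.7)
The plan is to chain together Lemma~\ref{l:smooth} and Proposition~\ref{p:pd-min} with a semialgebraic dimension argument. By Lemma~\ref{l:smooth}, for large $n$ the set $\partial^1\cD_n(L)$ is precisely the smooth locus of the semialgebraic boundary $\partial\cD_n(L)$, while Proposition~\ref{p:pd-min} already supplies Zariski density of $\partial\cD_n(L)$ in $\fl_n(L)$. What remains is to check that excising the real-singular portion of $\partial\cD_n(L)$ does not destroy Zariski density.

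To see this, I would decompose the LMI-minimal pencil $L$ as $L=L_1\oplus\cdots\oplus L_\ell$ into pairwise non-similar irreducible hermitian blocks, so that by Theorem~\ref{t:main} the equality $\fl_n(L)=\bigcup_k\fl_n(L_k)$ is the irreducible decomposition for large $n$. Fix $k$. The proof of Proposition~\ref{p:pd-min} constructs a Euclidean ball $B(X_0,\ve)\subset\herm{n}^g$ on which $L_{k'}(X)\succ 0$ for every $k'\neq k$, and on the convex hull $\cC$ of $\{0\}\cup B(X_0,\ve)$ the set $\flh_n(L_k)\cap\cC$ is contained in $\partial\cD_n(L_k)\cap\partial\cD_n(L)$ and has real dimension $gn^2-1$. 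Since $\ker L(X)=\ker L_k(X)$ throughout the open set where $L_{k'}(X)\succ 0$ for all $k'\neq k$, on $\cC$ the sets $\partial^1\cD_n(L)$ and $\partial^1\cD_n(L_k)$ coincide. Applying Lemma~\ref{l:smooth} to the LMI-minimal block $L_k$ identifies the latter with the smooth locus of $\partial\cD_n(L_k)$ for large $n$, whose complement in $\flh_n(L_k)\cap\cC$ is a semialgebraic set of strictly smaller real dimension. Hence $\partial^1\cD_n(L)\cap\cC$ still has real dimension $gn^2-1$ in $\herm{n}^g$. By \cite[Proposition~2.8.2]{BCR}, its Zariski closure in $\matc{n}^g$ is a complex hypersurface contained in the irreducible hypersurface $\fl_n(L_k)$, and thus equals it. Taking the union over $k$ yields Zariski density of $\partial^1\cD_n(L)$ in $\fl_n(L)$.

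The main obstacle is that on each irreducible component $\fl_n(L_k)$ one must retain full real dimension $gn^2-1$ after restricting to the smooth part; localizing to the open cone on which all the other blocks are strictly positive definite cleanly reduces this to the single-block situation already covered by Lemma~\ref{l:smooth}, and the standard semialgebraic fact that the singular locus of a semialgebraic set is of strictly smaller dimension closes the argument.
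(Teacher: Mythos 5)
Your proof is correct, but it takes a genuinely different (and somewhat longer) route than the paper. The paper proves this corollary in two lines: by Theorem~\ref{t:fl-min}, $\fl^1_n(L)$ is Zariski open and dense in $\fl_n(L)$; by Proposition~\ref{p:pd-min}, $\partial\cD_n(L)$ is Zariski dense in $\fl_n(L)$; and since $\partial^1\cD_n(L)=\partial\cD_n(L)\cap\fl^1_n(L)$, Zariski density follows (working through the irreducible components $\fl_n(L_k)$, which is built into the proof of Proposition~\ref{p:pd-min}). Theorem~\ref{t:fl-min} is invoked precisely so that one does not have to re-examine smooth points at this stage. Your argument instead substitutes Lemma~\ref{l:smooth} for Theorem~\ref{t:fl-min}, localizes to the cone $\cC$ where $\ker L(X)=\ker L_k(X)$, identifies $\partial^1\cD_n(L_k)$ with the smooth locus of $\partial\cD_n(L_k)$, and then invokes a dimension estimate for the excised singular locus. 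This is valid, but note that the phrase ``the standard semialgebraic fact that the singular locus of a semialgebraic set is of strictly smaller dimension'' is doing more work than it should: what is really needed is that the complex singular locus of the irreducible hypersurface $\fl_n(L_k)$ is a proper Zariski-closed subset (so its real points have dimension $\le gn^2-2$), which follows from $\det L_k(\gX^{(n)})$ being irreducible and square-free (Theorem~\ref{t:main}, Remark~\ref{r:rad}) and is exactly the content of Theorem~\ref{t:fl-min}. So you end up re-deriving a special case of Theorem~\ref{t:fl-min} instead of citing it directly; the paper's route is cleaner. Two small additional remarks: applying Lemma~\ref{l:smooth} to the block $L_k$ is legitimate since each irreducible hermitian pencil is FL-minimal (hence Remark~\ref{r:rad} applies to it), and the observation that $\cC$ sits inside the open set where $L_{k'}(X)\succ 0$ for $k'\ne k$ correctly uses that $L_{k'}$ is affine, so the positive-definite cone pulls back to a convex set.
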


\begin{proof}
For large $n$, $\fl^1_n(L)$ is Zariski dense and open in $\fl_n(L)$ by Theorem \ref{t:fl-min} and $\partial\cD_n(L)$ is Zariski dense in $\fl_n(L)$ by Proposition \ref{p:pd-min}. Therefore $\partial^1\cD_n(L)$ is Zariski dense in $\fl_n(L)$ for large $n$.
\end{proof}

\begin{rem}
Let us consider the real symmetric setup, where the coefficients of a monic pencil $L$ are real symmetric matrices and we are only interested in evaluations of $L$ on tuples of real symmetric matrices. Then the analog of Corollary \ref{c:pd-min} fails in general. For example, let $L$ be a monic symmetric pencil of size $4d$ whose coefficients generate the algebra of $d\times d$ matrices over quaternions; then $L$ is irreducible over $\R$ and thus LMI-minimal as a pencil over $\R$. However, $L$ is unitarily equivalent to $L'\oplus L'$ for an irreducible hermitian pencil $L'$, so $\fl^1(L)=\emptyset$. On the positive side, the real version of quantum Kippenhahn's conjecture holds: if $L$ is a symmetric irreducible pencil over $\R$, then by \cite[Corollary 5.8]{KV} there exists a tuple of symmetric matrices $X$ such that $\dim\ker L(X)=2$.
\end{rem}

\subsection{Randstellensatz}

An important result in free real algebraic geometry is the Randstellensatz \cite[Theorem 1.1]{HKN} which describes noncommutative polynomials defining a given LMI domain and its boundary. It holds for monic pencils $L$ satisfying the ``zero determining property'' \cite[Subsection 5.2]{HKN}. Without going into technical details we assert that every LMI-minimal hermitian pencil $L$ satisfies the zero determining property because $\det L(\gX^{(n)})$ is a minimum degree defining polynomial for $\fl_n(L)$ by Remark \ref{r:rad} and thus also for the Zariski closure of $\partial^1\cD_n(L)$ in $\herm{n}^g$ (which equals $\flh_n(L)$) by Corollary \ref{c:pd-min}. Thus we obtain the following improvement of \cite[Theorem 1.1]{HKN}.

\begin{cor}\label{c:rand}
Let $L$ be an LMI-minimal hermitian pencil of size $d$ and $f\in \opm_d(\pxc)$. Then
$$f|_{\cD(L)}\succeq0\qquad \text{and} \qquad \ker L(X)\subseteq\ker f(X)\quad \forall X\in\cD(L)$$
if and only if
$$f=L\left(\sum_i q_i^*q_i\right)L+\sum_j (r_j L+C_j)^*L(r_j L+C_j)$$
for $q_i\in\pxc^d$, $r_j\in\opm_d(\pxc)$ and $C_j\in\matc{d}$ satisfying $C_jL=LC_j$.
\end{cor}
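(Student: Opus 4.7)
The plan is to derive this result directly from the Randstellensatz of \cite[Theorem 1.1]{HKN}, which establishes exactly the claimed sum-of-squares representation under the technical assumption that $L$ satisfies the \emph{zero determining property} (roughly: for large $n$, the polynomial $\det L(\gX^{(n)})$ is a minimum-degree defining polynomial for the Zariski closure of $\partial^1\cD_n(L)$ in $\herm{n}^g$, and this closure is the real locus $\flh_n(L)$). Thus the entire content of Corollary \ref{c:rand} beyond \cite{HKN} is the verification that every LMI-minimal hermitian pencil satisfies this property; once that is in hand, the representation follows by invoking the cited theorem verbatim.

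To verify the zero determining property, I would assemble two ingredients already proved. First, by Remark \ref{r:rad}, every FL-minimal (hence every LMI-minimal) pencil has the property that $\det L(\gX^{(n)})$ is square-free for large $n$, so it is a minimum-degree defining polynomial for $\fl_n(L)\subseteq \matc{n}^g$ and generates the radical vanishing ideal of $\fl_n(L)$. Second, by Corollary \ref{c:pd-min}, the set $\partial^1\cD_n(L)$ is Zariski dense in $\fl_n(L)$ for large $n$. Combining these, the Zariski closure of $\partial^1\cD_n(L)$ in $\herm{n}^g$ equals $\flh_n(L)$ and its closure in the ambient complex space equals $\fl_n(L)$; hence $\det L(\gX^{(n)})$ is simultaneously a minimum-degree defining polynomial for both $\fl_n(L)$ and the Zariski closure of $\partial^1\cD_n(L)$. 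This is precisely the zero determining property required in \cite[Subsection 5.2]{HKN}.

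With this property secured for large enough $n$, one applies \cite[Theorem 1.1]{HKN} to the LMI-minimal pencil $L$ and the matrix polynomial $f\in\opm_d(\pxc)$. The hypothesis $f|_{\cD(L)}\succeq 0$ together with the kernel-inclusion condition $\ker L(X)\subseteq \ker f(X)$ for $X\in\cD(L)$ is exactly what \cite{HKN} requires, and the conclusion is the claimed weighted sum-of-squares decomposition with weights $q_i\in\pxc^d$, $r_j\in\opm_d(\pxc)$ and commuting scalar matrices $C_j\in\matc{d}$ satisfying $C_jL=LC_j$. The converse implication is immediate: each summand on the right is manifestly positive semidefinite on $\cD(L)$ and annihilates $\ker L(X)$ on the nose.

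The only real obstacle is checking that our Zariski-density result is strong enough to meet the precise technical form of the zero determining property in \cite{HKN}; this is more bookkeeping than mathematical difficulty, since Corollary \ref{c:pd-min} gives density in the strong, quantitative sense (an irreducible hypersurface closure in the ambient complex affine space), which is strictly stronger than what \cite{HKN} demands. Thus the entire proof reduces to the one-line citation sketched in the paragraph preceding the corollary, and no further computation is needed.
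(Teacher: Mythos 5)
Your proposal is correct and is essentially the same argument the paper gives: verify the zero determining property of \cite{HKN} for an LMI-minimal $L$ by combining Remark \ref{r:rad} (that $\det L(\gX^{(n)})$ is square-free and hence a minimum-degree defining polynomial for $\fl_n(L)$) with Corollary \ref{c:pd-min} (Zariski density of $\partial^1\cD_n(L)$ in $\fl_n(L)$), and then invoke \cite[Theorem 1.1]{HKN}.
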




\begin{thebibliography}{KK}

\bibitem[AM16]{AM}
J. Agler, J. E. McCarthy:
{\it The implicit function theorem and free algebraic sets},
Trans. Amer. Math. Soc. 368 (2016) 3157--3175.

\bibitem[ANT+]{ANT}
V. Alekseev, T. Netzer, A. Thom:
{\it Quadratic modules, $C^*$-algebras, and free convexity},
to appear in Trans. Amer. Math. Soc. 

\bibitem[Ami57]{Ami}
S. A. Amitsur:
{\it A generalization of Hilbert's Nullstellensatz},
Proc. Amer. Math. Soc. 8 (1957) 649--656.

\bibitem[ARJ15]{ARJ}
V. Arvind, G. Rattan, P. Joglekar:
{\it On the complexity of noncommutative polynomial factorization},
Mathematical foundations of computer science 2015 part II, 38--49,
Lecture Notes in Comput. Sci. 9235, Springer, Heidelberg, 2015. 

\bibitem[AHKM+]{AHKM}
M. Augat, J. W. Helton, I. Klep, S. McCullough:
{\it Bianalytic maps between free spectrahedra},
to appear in Math. Ann.

\bibitem[BS15]{BS}
N. R. Baeth, D. Smertnig:
{\it Factorization theory: from commutative to noncommutative settings},
J. Algebra 441 (2015) 475--551. 

\bibitem[BGKR08]{BGKR}
H. Bart, I. Gohberg, M. A. Kaashoek, A. C. Ran:
{\it Factorization of matrix and operator functions: the state space method},
Operator Theory: Advances and Applications 178, Linear Operators and Linear Systems, Birkh{\"a}user Verlag, Basel, 2008.

\bibitem[BGM05]{BGM}
J. A. Ball, G. Groenewald, T. Malakorn:
{\it Structured noncommutative multidimensional linear systems}, 
SIAM J. Control Optim. 44 (2005) 1474--1528.

\bibitem[BR11]{BR}
J. Berstel, C. Reutenauer: {\it Noncommutative rational series with applications}, Encyclopedia of Mathematics and its Applications 137. Cambridge University Press, Cambridge, 2011.

\bibitem[BV05]{BV}
J. A. Ball, V. Vinnikov:
{\it Lax-Phillips scattering and conservative linear systems: a Cuntz-algebra multidimensional setting},
Mem. Amer. Math. Soc. 178 (2005).

\bibitem[BHL17]{BHL}
J. P. Bell, A. Heinle, V. Levandovskyy:
{\it On noncommutative finite factorization domains},
Trans. Amer. Math. Soc. 369 (2017) 2675--2695. 

\bibitem[BPT13]{BPT}
G. Blekherman, P. A. Parrilo, R. R. Thomas (eds.):
{\it Semidefinite optimization and convex algebraic geometry},
MOS-SIAM Ser. Optim. 13, SIAM, Philadelphia, PA, 2013. 

\bibitem[BCR98]{BCR}
J. Bochnak, M. Coste, M. F. Roy: 
{\it Real algebraic geometry},
Results in Mathematics and Related Areas (3) 36, 
Springer-Verlag, Berlin, 1998.

\bibitem[Br\"a11]{Bra}
P. Br{\"a}nd{\'e}n:
{\it Obstructions to determinantal representability},
Adv. Math. 226 (2011) 1202--1212. 

\bibitem[Bre14]{Bre}
M. Bre\v{s}ar:
{\it Introduction to noncommutative algebra},
Universitext, Springer, Cham, 2014.

\bibitem[CIW97]{CIW}
A. M. Cohen, G. Ivanyos, D. B. Wales:
{\it Finding the radical of an algebra of linear transformations},
J. Pure Appl. Algebra 117 (1997) 177--193.

\bibitem[Coh06]{Coh}
P.M. Cohn:
{\it Free ideal rings and localization in general rings},
New Mathematical Monographs 3, 
Cambridge University Press, Cambridge, 2006.

\bibitem[DD-OSS17]{DDOSS}
K. R. Davidson, A. Dor-On, O. M. Shalit, B. Solel:
{\it Dilations, Inclusions of Matrix Convex Sets, and Completely Positive Maps},
Int. Math. Res. Not. IMRN (2017) 4069--4130. 

\bibitem[DM17]{DM}
H. Derksen, V. Makam:
{\it Polynomial degree bounds for matrix semi-invariants},
Adv. Math. 310 (2017) 44--63. 

\bibitem[dOHMP09]{dOHMP}
M. C. de Oliveira, J. W. Helton, S. McCullough, M. Putinar:
{\it Engineering systems and free semi-algebraic geometry},
Emerging applications of algebraic geometry, 17--61, IMA Vol. Math. Appl. 149, Springer, New York, 2009. 

\bibitem[Ebe91]{Ebe}
W. Eberly:
{\it Decompositions of algebras over $\R$ and $\C$},
Comput. Complexity 1 (1991) 211--234. 

\bibitem[EW97]{EW}
E. G. Effros, S. Winkler:
{\it Matrix convexity: operator analogues of the bipolar and Hahn-Banach theorems},
J. Funct. Anal. 144 (1997) 117--152. 

\bibitem[EHKM+]{EHKM}
E. Evert, J. W. Helton, I. Klep, S. McCullough:
{\it Extreme points of matrix convex sets, free spectrahedra, and dilation theory},
to appear in J. Geom. Anal.

\bibitem[For84]{For}
E. Formanek:
{\it Invariants and the ring of generic matrices},
J. Algebra 89 (1984) 178--223. 

\bibitem[GW09]{GW}
R. Goodman, N. R. Wallach:
{\it Symmetry, representations, and invariants},
Graduate Texts in Mathematics 255, Springer, Dordrecht, 2009.

\bibitem[HKM13]{HKM}
J. W. Helton, I. Klep, S. McCullough:
{\it The matricial relaxation of a linear matrix inequality},
Math. Program. 138 (2013) 401--445.

\bibitem[HKN14]{HKN}
J. W. Helton, I. Klep, C. S. Nelson:
{\it Noncommutative polynomials nonnegative on a variety intersect a convex set},
J. Funct. Anal. 266 (2014) 6684--6752. 

\bibitem[HM04]{HM}
J. W. Helton, S. McCullough:
{\it A Positivstellensatz for non-commutative polynomials},
Trans. Amer. Math. Soc. 356 (2004) 3721--3737.

\bibitem[HV07]{HV}
J. W. Helton, V. Vinnikov:
{\it Linear matrix inequality representation of sets},
Comm. Pure Appl. Math. 60 (2007) 654--674.

\bibitem[K-VV09]{KVV1}
D. S. Kalyuzhnyi-Verbovetskyi, V. Vinnikov:
{\it Singularities of rational functions and minimal factorizations: the noncommutative and the commutative setting},
Linear Algebra Appl. 430 (2009) 869--889.

\bibitem[K-VV14]{KVV3}
D. S. Kalyuzhnyi-Verbovetskyi, V. Vinnikov:
{\it Foundations of free noncommutative function theory},
Mathematical Surveys and Monographs 199,
American Mathematical Society, Providence RI, 2014.

\bibitem[Kat95]{Kat}
T. Kato:
{\it Perturbation theory for linear operators},
Classics in Mathematics, Springer-Verlag, Berlin, 1995.

\bibitem[K\v{S}17]{KS}
I. Klep, \v{S}. \v{S}penko:
{\it Free function theory through matrix invariants},
Canad. J. Math. 69 (2017) 408--433. 

\bibitem[KV17]{KV}
I. Klep, J. Vol\v{c}i\v{c}:
{\it Free loci of matrix pencils and domains of noncommutative rational functions},
Comment. Math. Helv. 92 (2017) 105--130.

\bibitem[Kri]{Kri}
T.-L. Kriel:
{\it Free spectahedra, determinants of monic linear pencils and decomposition of pencils},
preprint \texttt{arXiv:1611.03103}.

\bibitem[KPV15]{KPV}
M. Kummer, D. Plaumann, C. Vinzant:
{\it Hyperbolic polynomials, interlacers, and sums of squares},
Math. Program. 153 (2015) 223--245. 

\bibitem[Laf83]{Laf}
T. J. Laffey:
{\it A counterexample to Kippenhahn's conjecture on Hermitian pencils},
Linear Algebra Appl. 51 (1983) 179--182.

\bibitem[LPR04]{LPR}
A. S. Lewis, P. A. Parrilo, M. V. Ramana:
{\it The Lax conjecture is true},
Proc. Amer. Math. Soc. 133 (2005) 2495--2499. 

\bibitem[MN88]{MN}
J. R. Magnus, H. Neudecker:
{\it Matrix differential calculus with applications in statistics and econometrics},
Wiley Series in Probability and mathematical statistics: Applied probability and statistics, John Wiley \& Sons, Ltd., Chichester, 1988.

\bibitem[MSS15]{MSS}
A. W. Marcus, D. A. Spielman, N. Srivastava:
{\it Interlacing families II: Mixed characteristic polynomials and the Kadison-Singer problem},
Ann. of Math. 182 (2015) 327--350. 

\bibitem[MS13]{MS}
P. S. Muhly, B. Solel:
{\it Tensorial function theory: from Berezin transforms to Taylor’s Taylor series and back},
Integral Equations Operator Theory 76 (2013) 463--508.

\bibitem[Pro76]{Pro}
C. Procesi:
{\it The invariant theory of $n\times n$ matrices},
{Adv. Math.} {19} (1976) 306--381.

\bibitem[RG95]{RG}
M. Ramana, A. J. Goldman:
{\it Some geometric results in semidefinite programming},
J. Global Optim. 7 (1995) 33--50. 

\bibitem[RV07]{RV}
Z. Reichstein, N. Vonessen:
{\it Polynomial identity rings as rings of functions},
J. Algebra 310 (2007) 624--647. 

\bibitem[Ren06]{Ren}
J. Renegar:
{\it Hyperbolic programs, and their derivative relaxations},
Found. Comput. Math. 6 (2006) 59--79. 

\bibitem[SIG97]{SIG}
R. E. Skelton, T. Iwasaki, D. E. Grigoriadis:
{\it A unified algebraic approach to linear control design}, 
The Taylor \& Francis Systems and Control Book Series, Taylor \& Francis, Ltd., London, 1998.

\bibitem[Scm09]{Scm}
K. Schm\"udgen:
{\it Noncommutative real algebraic geometry — some basic concepts and first ideas},
Emerging applications of algebraic geometry, 325--350, IMA Vol. Math. Appl. 149, Springer, New York, 2009. 

\bibitem[Scr]{Scr}
K. Schrempf:
{\it On the factorization of non-commutative polynomials (in free associative algebras)},
preprint \texttt{arXiv:1706.01806}.

\bibitem[vOV81]{vOV}
F. M. J. van Oystaeyen, A. H. M. J. Verschoren:
{\it Noncommutative algebraic geometry: an introduction},
Lecture Notes in Mathematics 887, Springer-Verlag, Berlin, 1981.

\bibitem[Vol17]{Vol1}
J. Vol\v{c}i\v{c}:
{\it On domains of noncommutative rational functions},
Linear Algebra Appl. 516 (2017) 69--81.

\bibitem[Vol18]{Vol}
J. Vol\v{c}i\v{c}:
{\it Matrix coefficient realization theory of noncommutative rational functions},
J. Algebra
499 (2018) 397--437.


\bibitem[Vol+]{Vol2}
J. Vol\v{c}i\v{c}:
{\it Real free loci of linear matrix pencils},
to appear in Oberwolfach reports.

\bibitem[WSV12]{WSV}
H. Wolkowicz, R. Saigal, L. Vandenberghe (editors): {\it Handbook of semidefinite programming: theory, algorithms, and applications}, vol. 27, Springer Science \& Business Media, 2012.

\end{thebibliography}
\end{document}